\numberwithin{equation}{section}
\newtheorem{theorem}{Theorem}[section]
\newtheorem{lemma}[theorem]{Lemma}
\newtheorem{conjecture}[theorem]{Conjecture}
\newtheorem{definition}[theorem]{Definition}
\newtheorem{proposition}[theorem]{Proposition}
\newtheorem{corollary}[theorem]{Corollary}
\newtheorem{example}[theorem]{Example}
\newtheorem{remark}[theorem]{Remark}
\newcommand{\NmuD}{{\mathrm{N}\mu\mathrm{D}}}
\newcommand{\sNmuD}{{\mathrm{sN}\mu\mathrm{D}}}
\newcommand{\walpha}{{\widetilde{\alpha}}}
\newcommand{\wbeta}{{\widetilde{\beta}}}
\newcommand{\wtheta}{{\widetilde{\theta}}}
\newcommand{\wnu}{{\widetilde{\nu}}}
\newcommand{\muD}{{\mu\mathrm{D}}}
\newcommand{\Nmu}{{\mathrm{N}\mu}}
\newcommand{\NED}{\mathrm{NED}}
\newcommand{\USP}{\mathrm{USP}}
\newcommand{\UPP}{\mathrm{UPP}}
\newcommand{\USPP}{\mathrm{USPP}}
\newcommand{\sNED}{\mathrm{sNED}}
\newcommand{\st}{\mathfrak{st}}
\newcommand{\un}{\mathfrak{un}}
\newcommand{\St}{\mathrm{St}}
\newcommand{\Un}{\mathrm{Un}}
\newcommand{\Si}{\Sigma}
\newcommand{\ED}{\mathrm{ED}}
\newcommand{\Id}{\mathrm{Id}}
\newcommand{\im}{\mathrm{im}}
\renewcommand{\P}{\mathrm{P}}
\newcommand{\Q}{\mathrm{Q}}
\newcommand{\sgn}{\mathrm{sgn}}
\newcommand{\rank}{\mathrm{rank}\,}
\newcommand{\qeg}{\mathrm{q}}
\newcommand{\ceg}{\mathrm{c}}
\numberwithin{equation}{section}
\title[Discrete nonuniform $\mu$-dichotomy spectra]{Discrete $\mu$-dichotomy spectrum: beyond uniformity and new insights.}
\author[A. Casta\~{n}eda]{\'Alvaro Casta\~{n}eda $^{*}$}
\author[C. A. Gallegos]{Claudio A. Gallegos}
\address{Universidad de Chile, UCH, Facultad de Ciencias, Departamento de Matem\'aticas, Casilla 653, Santiago, Chile.}
\email{castaneda@uchile.cl,claudio.gallegos.castro@gmail.com, nestor.jara@ug.uchile.cl} 
\thanks{$^{*}$
	This author was partially supported by FONDECYT REGULAR No. 1240361. }
\author[N. Jara]{N\'estor Jara}
\date{}
\begin{document}
	
	\begin{abstract}
We develop spectral theorems for nonautonomous linear difference systems, considering different types of $\mu$-dichotomies, both uniform and nonuniform. In the nonuniform case, intriguing scenarios emerge --that have been employed but whose consequences have not been thoroughly explored-- which surprisingly exhibit unconventional behavior. These particular cases motivate us to introduce two novel properties of nonautonomous systems (even in the continuous-time framework), which appear to have been overlooked in the existing literature. Additionally, we introduce a new conceptualization of a nonuniform $\mu$-dichotomy spectrum, which lies between the traditional nonuniform $\mu$-dichotomy spectrum and the slow nonuniform $\mu$-dichotomy spectrum. Moreover, and this is particularly noteworthy, we propose a conjecture that enables the derivation of spectral theorems in this new setting. Finally, contrary to what has been believed in recent years, through the lens of optimal ratio maps, we show that the nonuniform exponential dichotomy spectrum is not preserved between systems that are weakly kinematically similar.
	\end{abstract}
	
	\subjclass[2010]{Primary: 37D25.; Secondary: 37B55.}

	
	\keywords{Nonautonomous difference equations, Nonautonomus hyperbolicity, Nonuniform dichotomy, Nonuniform dichotomy spectrum, Kinematic similarity}

	\maketitle

	\tableofcontents
	
	\section{Introduction}

Over the past two decades, the concept of nonuniform exponential dichotomy ($\NED$), introduced by Barreira and Valls \cite{BV-CMP,BV-JDE}, has emerged as a fundamental tool for studying the dynamics of nonautonomous functional, difference and differential equations, as well as nonuniform hyperbolicity in both finite and infinite dimensions, see e.g. \cite{BVD-AM,DZZ,DZZ-PL,LOO}. The motivation for defining this kind of behavior comes from ergodic theory and Lyapunov exponents, see for instance \cite[Sec. 2.2]{BV-JDDE}. This article will explore the concept of nonuniform dichotomy, along with its slight variations.

To provide context for the aim of this work, we will briefly outline some background regarding the nonuniform dichotomy spectrum: Building on Siegmund's seminal work \cite{Siegmund2002}, significant advancements have been made in the spectral theory of uniform and nonuniform exponential dichotomies. Broadly speaking, the central idea is to demonstrate that, under suitable assumptions, the dichotomy spectrum can be represented as the union of at most $d$
closed, non-overlapping intervals, where
$d$ denotes the dimension of the system being considered. 

In the literature, this type of results are commonly referred to as {\it spectral theorems}; for instance, in the context of nonautonomous linear differential equations (that is, continuous time) and nonuniform exponential dichotomies, we refer the reader to the works of Chu {\it et al.} \cite{Chu} and Zhang \cite{Xiang}, where spectral theorems have been developed. Additional details on the nonuniform exponential dichotomy spectrum can be found in \cite{APR,Kloeden}. More recently, Silva \cite{Silva} extended these results to include both exponential and more general dichotomies, as those studied on \cite{Bento,PN2, PN1,PN3,ZFY}. 

In the case of nonautonomous linear difference equations (that is, discrete time) and nonuniform exponential dichotomies, references are not so abundant. To the best of our knowledge, some of the few works address this setting are: \cite{DZZ-PL}, where the authors use a functional notion of the spectrum; in \cite{Chu2} , spectral theorems were established using the classical approach of linear integral manifolds, while in \cite{Song}, they were proven through a method based on nonuniform kinematic similarity (referred to as {\it weak kinematic similarity} in the discrete context). Despite their differing purposes (the first focused on a reducibility result, while the second applied this tool to obtain a normal forms theorem), the last two articles rely on a claim that we will later refute: The nonuniform dichotomy spectrum is preserved between systems that are weakly kinematically similar.

 \subsection{Novelty and Main Results}

Inspired by Silva's work on $\mu$-dichotomy \cite{Silva} and the limited literature on the nonuniform spectral theory in the discrete setting, the aim of this work is to explore the nonuniform $\mu$-dichotomy spectrum  of nonautonomous linear difference equations of type
\begin{equation}\label{700}
x(k+1)=A(k)x(k), \qquad k\in\mathbb{Z}, 
\end{equation}
where $A\colon\mathbb{Z}\to \mathbb{R}^{d\times d}$ is a matrix valued function. 

We provide a detailed development of new properties that emerge when considering different notions of $\mu$-dichotomies, including uniform ($\muD$), nonuniform ($\NmuD$), and slow nonuniform ($\sNmuD$), as defined in Def.~\ref{DefNmuD}. The last concept, {\it slow nonuniform dichotomy} —a term first coined in \cite{GJ}— will be of particular interest due to its unexpected behavior. As mentioned in \cite[Sec. 2.2]{BV-JDDE}, almost all linear systems may exhibit a nonuniform dichotomy, with the rate of nonuniformity becoming arbitrarily small in relation to the Lyapunov exponents. This ambiguity in the notion of ``smallness" in the rate of nonuniformity is in some sense related with the splitting we make between the notion of a slow nonuniform $\mu$-dichotomy $(\sNmuD)$ and the notion of nonuniform $\mu$-dichotomy ($\NmuD$), being both terms called with the same name so far in literature. Part of the aim of this work is to shed some light in this ambiguity. See  Rem. \ref{slowproblem} for more details.

To facilitate the reader's search for the novelty and key results of our work, we will list what is done and where it can be found in our article:

\begin{itemize}
	\item We introduce the {\it unique projector property} (UPP) and the {\it unbounded solutions property} (USP) for system \eqref{700}, see Def.~\ref{769}. We emphasize that these notions are not uniquely defined in the discrete case; they can also be extended to linear nonautonomous differential equations in continuous time.
	
	\medskip 
	
	\item If the system \eqref{700} admits $\muD$ or $\NmuD$, the UPP and USP are always fulfilled, see Lem.~\ref{705}. Surprisingly, these properties do not necessarily hold when the system \eqref{700} admits $\sNmuD$, as shown in Ex.~\ref{707} and \ref{718}.
	
	\medskip
	
	\item We state spectral theorems for $\muD$ and $\NmuD$, see Thm.~\ref{770}. Moreover, we introduce the {\it unique projector slow nonuniform $\mu$-dichotomy spectrum} for system \eqref{700} and we denote it by $\Si_{\sNmuD}^{\UPP}(A)$. This new notion of spectrum is an intermediate spectrum between the slow nonuniform $\mu$-dichotomy spectrum $\Si_{\sNmuD}(A)$ and the nonuniform $\mu$-dichotomy spectrum $\Si_{\NmuD}(A)$, see Def.~\ref{771}.
	
	\medskip
	
	\item We introduce the USPP conjecture, see Conj.~\ref{750}. Assuming this conjecture holds, we derive several intermediate results, from which we can deduce a spectral theorem for the dichotomy spectrum $\Si_{\sNmuD}^{\UPP}(A)$, see Thm.~\ref{772}. 
	
	\medskip
	
	\item With the aim of refuting the claim of invariance of the $\NmuD$ spectrum in the discrete context, we develop the {\it optimal ratio maps} introduced in \cite{GJ}, but in the discrete setting, see Sec.~\ref{optdichconst}. In the aforementioned article, the authors question the widely accepted claim that $\NmuD$ spectra remain invariant under $(\mu,\varepsilon)$-kinematic similarity for nonautonomous linear differential equations (continuous time). Based on this, a similar situation could be speculated to occur in the discrete context. In fact, we illustrate that the $\NmuD$ spectrum is not preserved for systems that are {\it weakly kinematically similar}, as asserted in \cite{Chu2} and \cite{Song}, see Ex.~\ref{773}. 
	
	\medskip
	
	\item Finally, by using the properties of the optimal ratio maps, we identify the underlying phenomenon related to the non-invariance of the dichotomy spectrum under weak kinematic similarity. This is explained in detail in Subsec.~\ref{774}. Our results prompt a reevaluation of certain findings where spectral invariance is assumed to hold.
\end{itemize}

These insights enhance our understanding of the complex scenario of the nonuniform dichotomy spectrum, offering new perspectives in both discrete and continuous settings. Contrary to what one might think, the nonuniform case is not a simple generalization of the uniform case and presents several challenges in obtaining spectral theorems.

\section{Notions of Dichotomy and Properties} 
    
In this section, we introduce the fundamental definitions that will be used throughout the paper. The primary objective is to present various notions of dichotomy and establish some of their key properties to characterize the behavior of solutions of nonautonomous equations. 

A \textbf{fundamental operator} for \eqref{700} is a map $X: \mathbb{Z} \to \mathcal{M}_d(\mathbb{R})$ that satisfies the operator equation 
$$X(k+1)=A(k)X(k),\qquad k\in \mathbb{Z},$$ and for every $k\in \mathbb{Z}$, there is an isomorphism between $\ker A(k)$ and $\ker A(k+1)$. On the other hand, the \textbf{evolution operator} or \textbf{transition matrix} for \eqref{700} is
 \begin{equation*}
    \Phi(k,n)= \left\{ \begin{array}{lc}
             A(k-1)A(k-2)\cdots A(n) &  \text{if}\,\,k > n, \\
             \\ I & \text{ if}\,\,k=n.
             \end{array}
   \right.
   \end{equation*}

Note that, in general, we do not assume that $A$ is nonsingular. However, if $A$ is nonsingular, the evolution operator for \eqref{700} can be expanded as $\Phi: \mathbb{Z} \times \mathbb{Z} \to GL_d(\mathbb{R})$, defined by
\begin{equation*}
    \Phi(k,n)= \left\{ \begin{array}{lc}
             A(k-1)A(k-2)\cdots A(n) &  \text{if}\,\,k > n, \\
             \\ I & \text{ if}\,\,k=n, \\
             \\ A^{-1}(k)A^{-1}(k+1)\cdots A^{-1}(n-1) & \text{if}\,\,k<n.
             \end{array}
   \right.
   \end{equation*}
Moreover, under this condition, given a fundamental operator $X$, the evolution operator can be expressed as $\Phi(k,n) = X(k)X(n)^{-1}$, since any fundamental operator in this case is also nonsingular. For dynamics in continuous time, analogous definitions are given to the concept of fundamental and evolution operators.

An \textbf{invariant projector} for \eqref{700} is a map $\P:\mathbb{Z}\to \mathcal{M}_d(\mathbb{R})$ of idempotents such that 
\begin{equation*}
    A(n) \P(n)=\P(n+1)A(n),
\end{equation*}
and the map
\[A(n)|_{\ker\P(n)}:\ker\P(n)\to \ker \P(n+1),\qquad \forall\,n\in \mathbb{Z},\]
is an isomorphism. Note that this implies each $\P(n)$ has the same rank. As a consequence, if $n\mapsto\P(n)$ is an invariant projector for \eqref{700}, then
\[\P(k)\Phi(k,n)=\Phi(k,n)\P(n),\qquad\forall\,n,k\in \mathbb{Z}.\]

If we fix an invariant projector for \eqref{700}, even when $n\mapsto A(n)$ may be singular, we can extend the definition of $\Phi$ to the case where $k < n$ via
\[\Phi(k,n)=\left[\Phi(n,k)|_{\ker \P(k)}\right]^{-1}:\ker \P(n)\to \ker \P(k).\]

The following definition is a crucial tool for describing behaviors more general than exponential ones. This concept is based on a similar notion presented in \cite{Silva discreto} for dynamics defined on $\mathbb{Z}^+$, and is also based on a similar concept presented in \cite{Silva} for continuous time flows. This notion has gained importance recently, as it has been employed on different works \cite{CJ,GJ}. Nonetheless, this concept has an already long trajectory in literature, as to the best of our knowledge, a preliminar notion of it first appeared in the works of Pinto and Naul\'in \cite{PN2,PN1,PN3} and has been applied to describe different scenarios \cite{Bento, ZFY}.

\begin{definition}
    We say that a map $\mu:\mathbb{Z}\to \mathbb{R}^+$ is a \textbf{discrete growth rate} if it is non-decreasing and verifies $\mu(0)=1$, $\lim_{n\to +\infty}\mu(n)=+\infty$ and $\lim_{n\to -\infty}\mu(n)=0$. 
\end{definition}

For instance, the map $n\mapsto e^n$ defines the \textbf{exponential growth rate}, while $p:\mathbb{Z}\to\mathbb{R}^+$ given by
   \begin{equation*}
		p(n):= \left\{ \begin{array}{lcc}
		n &  \text{ if } &   n> 0, \\
  1  &\text{if}& n=0,\\
		 \frac{1}{-n} & \text{ if }& n< 0,
		\end{array}
		\right.
		\end{equation*}
defines the \textbf{polynomial growth rate}. This last notion of growth has been employed on \cite{Dragicevic6} for dynamics defined on $\mathbb{Z}^+$ and a similar notion was used on \cite[Ex.~3.3]{CJ} in the context of continuous time dynamics. 

Other important examples that we will employ during the paper are: the map $\qeg:\mathbb{Z}\to \mathbb{R}^+$ given by 
   \begin{equation}\label{740}
		\qeg(n):= \left\{ \begin{array}{lcc}
		e^{n^2} &  \text{ if } &   n> 0, \\
  1  &\text{if}& n=0,\\
		 e^{-n^2} & \text{ if }& n< 0,
		\end{array}
		\right.
		\end{equation}
 which we call the \textbf{quadratic exponential growth rate}, and the map $\ceg:\mathbb{Z}\to \mathbb{R}^+$, given by $\ceg(n)=e^{n^3}$, which we call the \textbf{cubic exponential growth rate}. The cubic exponential growth rate was employed on \cite[Ex.~3.6]{CJ} in the context of continuous time dynamics.
 
For similar purposes, in \cite[p. 345]{Potzsche} the author introduces the concept of {\it generalized exponential function}, defined as a map $e_a(k, n)$ depending on two discrete variables. While this tool is more general, its primary application arises when $e_a(k, n) = \mu(k) / \mu(n)$ for some growth rate $\mu$.

In order to introduce the following definition, we employ the {\bf sign map} $\sgn:\mathbb{Z}\to \mathbb{R}$ given by $\sgn(n)=1$ if $n> 0$, $\sgn(0)=0$ and $\sgn(n)=-1$ if $n<0$.

\begin{definition}\label{DefNmuD}
    We say that system \eqref{700} admits a \textbf{nonuniform $\mu$-dichotomy} $(\NmuD)$ on $\mathbb{Z}$ if there is an invariant projector $n\mapsto \P(n)$ and constants $K\geq 1$, $\alpha<0$, $\beta>0$, $\nu,\theta\geq  0$, $\alpha+\theta<0$ and $\beta-\nu>0$ such that 
      \[\left\{ \begin{array}{lc}
            \norm{\Phi(k,n)\P(n)}\leq K\left(\dfrac{\mu(k)}{\mu(n)}\right)^\alpha \mu(n)^{\sgn(n)\theta}, &\forall\,\, k \geq n, \\
            \\ \norm{\Phi(k,n)[\Id-\P(n)]}\leq K\left(\dfrac{\mu(k)}{\mu(n)}\right)^\beta \mu(n)^{\sgn(n)\nu}, &\forall\,\, k\leq n.
             \end{array}
   \right.\]

   If $\theta,\nu=0$, we say it is a  \textbf{uniform $\mu$-dichotomy} $(\muD)$. If we do not impose the conditions $\alpha+\theta<0$ and $\beta-\nu>0$ we say it is a \textbf{slow nonuniform $\mu$-dichotomy} $(\sNmuD)$.
\end{definition}

In the cases of $\NmuD$ or $\sNmuD$, the above estimates are referred to as a dichotomy with \textbf{parameters} $(\P; \alpha, \beta, \theta, \nu)$ and constant $K$. For uniform dichotomies, we say that the previous estimates have parameters $(\P; \alpha, \beta)$. Note that if $\P = \Id$, then there is no parameter $\beta$ or $\nu$, while if $\P = 0$, there are no parameters $\alpha$ or $\theta$. In both cases, we retain the notation but replace the missing parameter with an asterisk $*$.

Note that every uniform dichotomy is, in particular, a nonuniform dichotomy, and every nonuniform dichotomy is, in particular, a slow nonuniform dichotomy.

\begin{remark}\label{slowproblem}
{\rm
In the context of continuous dynamics, the concept we refer to as {\it slow nonuniform dichotomy} has been employed in \cite{BV-CMP, BV-JDE, BV, HC,DZZ-PL,Huerta}. In these works, the authors address nonuniformity without requiring that the parameters associated with the nonuniformity (that is, $\theta$ and $\nu$) be smaller than the parameters related to convergence (that means, $\alpha,\beta$). Specifically, they do not impose the conditions $\alpha + \theta < 0$ and $\beta - \nu > 0$. However, in \cite{BV}, it is explicitly mentioned that the parameter associated with nonuniformity is {\it small enough}, but the exact meaning of this condition is not discussed. It is important to note that in these studies, the authors use the term {\it nonuniform dichotomy} to refer to this behavior, since the adjective {\it slow} was first coined in \cite{GJ}. On the other hand, some authors have explicitly considered these conditions, leading to what we term {\it nonuniform dichotomy}, as seen in \cite{Chu} for the exponential case and \cite{Silva} for the $\mu$-case. Additionally, some works have proposed a more stringent condition, defining them as the {\it exponential dichotomy with an arbitrarily small nonuniform part} \cite{BVD-AM,BV-RM, BVD-CCM}, among others.

In contrast, for discrete-time dynamics, the phenomenon we call slow nonuniform dichotomy appeared in \cite{DZZ}, while the concept of nonuniform dichotomy appeared in \cite{Silva discreto} for systems defined on $\mathbb{Z}^+$. It is worth noting that a concept similar to what we refer to as nonuniform dichotomy was considered in \cite{Chu2}, although in that work, the authors impose a stricter condition, equivalent to $\alpha + 2\theta < 0$ and $\beta - 2\nu > 0$.
}
\end{remark}

\begin{remark}\label{729}
 {\rm   In particular, for the exponential growth rate $\mu(n)=e^n$ we have that a nonuniform exponential dichotomy is given by
        \[\left\{ \begin{array}{lc}
            \norm{\Phi(k,n)\P(n)}\leq Ke^{\alpha(k-n)} e^{|n|\theta}, &\forall\,\, k \geq n, \\
            \\ \norm{\Phi(k,n)[\Id-\P(n)]}\leq Ke^{\beta(k-n)}e^{|n|\nu}, &\forall\,\, k\leq n,
             \end{array}
   \right.\]
while traditionally, nonuniform exponential dichotomies for discrete nonautonomous dynamics have been written as
\begin{equation}\label{728}
\left\{ \begin{array}{lc}
            \norm{\Phi(k,n)\P(n)}\leq Ka^{k-n}\varepsilon^{|n|}, &\forall\,\, k \geq n, \\
            \\ \norm{\Phi(k,n)[\Id-\P(n)]}\leq K(1/a)^{k-n}\varepsilon^{|n|}, &\forall\,\, k\leq n,
             \end{array}
   \right.
\end{equation}
for $a\in (0,1)$ and $\varepsilon\geq 1$. Nevertheless, it is straightforward to verify that by setting $a = e^\alpha$, $\beta = -\alpha$, $\varepsilon = e^\theta$, and $\theta = \nu$, both definitions become equivalent. We prefer the first formulation, as it enables us to generalize the construction to other growth rates. Furthermore, the notation used in this paper ensures a consistent unification of terminology for both difference and differential equations, aligning with the notation of recent works that investigate the spectral problem \cite{GJ,Silva}.
}
\end{remark}

\begin{remark}\label{730}
   {\rm  In \cite{Chu2}, the concept of nonuniform exponential dichotomy for difference equations is presented as a special case of Def.~\ref{DefNmuD}, where the growth rate is chosen as $\mu(n) = e^n$ for all $n \in \mathbb{Z}$, with constants $K \geq 1$, $\theta = \nu$, and $\alpha = -\beta$. Additionally, the condition $\alpha + 2\theta < 0$ is considered, which is equivalent to $a \varepsilon^2 < 1$, where $a = e^\alpha$ and $\varepsilon = e^\theta$ (using the multiplicative notation from \cite{Chu2}). This condition is strictly more stringent than $\alpha + \theta < 0$. Therefore, the set of possible values for $\alpha$ and $\theta$ is broader in Def.~\ref{DefNmuD}, as shown by the following inclusion:
   \[
\{(\alpha,\theta)\in(-\infty,0)\times[0,+\infty):\alpha+2\theta<0\}\subseteq\{(\alpha,\theta)\in(-\infty,0)\times[0,+\infty):\alpha+\theta<0\}.
   \]
   }
\end{remark}

\subsection{Properties of Each Notion of Dichotomy}

To characterize the behavior of solutions for each case, we begin by defining an invariant manifold associated with \eqref{700}. We say that $\mathcal{W} \subset \mathbb{Z} \times \mathbb{R}^d$ is an {\bf invariant manifold} for \eqref{700} if, for every $(n, \xi) \in \mathcal{W}$, it holds that $(k, \Phi(k, n) \xi) \in \mathcal{W}$ for all $k \in \mathbb{Z}$. In that case we write $\mathcal{W}(n)=\{\xi\in\mathbb{R}^d:(n,\xi)\in \mathcal{W}\}$ and call it the \textbf{fiber} over $n\in \mathbb{Z}$.

If $n\mapsto\P(n)$ is an invariant projector for \eqref{700}, then the sets
\[\im\,\P=\{(n,\xi):\xi\in  \im\,\P(n)\}\quad\text{ and }\quad\ker\P=\{(n,\xi):\xi\in \ker\P(n)\}\] 
are invariant manifolds for \eqref{700}. For system $\eqref{700}$, we define the sets
\[\mathcal{S}:=\left\{(n,\xi):\sup_{k\in \mathbb{Z}^+}\|\Phi(k,n)\xi\|<+\infty\right\}\]
and
\[\mathcal{U}:=\left\{(n,\xi):\sup_{k\in \mathbb{Z}^-}\|\Phi(k,n)\xi\|<+\infty\right\},\]
and call them the {\bf stable} and {\bf unstable} manifolds for \eqref{700}, respectively. It is easy to verify that they are indeed invariant manifolds for \eqref{700}.

Given two invariant manifolds $\mathcal{W}$ and $\mathcal{V}$ for \eqref{700}, we define their \textbf{intersection} by
\[\mathcal{W}\cap \mathcal{V}=\{(n,\xi)\in \mathbb{Z}\times\mathbb{R}^d:\xi\in\mathcal{W}(n)\cap \mathcal{V}(n) \},\]
and their \textbf{sum} by
\[\mathcal{W}+ \mathcal{V}=\{(n,\xi)\in \mathbb{Z}\times\mathbb{R}^d:\xi\in\mathcal{W}(n)+ \mathcal{V}(n) \}.\]
It is easy to verify that they are also invariant manifolds for \eqref{700}. If $\mathcal{W}\cap \mathcal{V}=\mathbb{Z}\times \{0\}$, we say their sum is a {\bf Whitney sum} and we write it as $\mathcal{W}\oplus\mathcal{V}$.

The following basic lemma establishes the first important relationship between the notions of dichotomy and the invariant manifolds associated with a system.

\begin{lemma}\label{704}
    Assume that system \eqref{700} has a dichotomy (either $\muD$, $\NmuD$ or $\sNmuD$) with invariant projector $n\mapsto\P(n)$. Then, $\im\, \P\subset \mathcal{S}$ and $\ker \P\subset \mathcal{U}$.
\end{lemma}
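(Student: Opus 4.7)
The plan is to feed a vector from the appropriate fibre of $\P$ directly into the relevant dichotomy estimate and read off finiteness of the supremum. The key observation unifying the three regimes in Def.~\ref{DefNmuD} is that they share the sign conditions $\alpha<0$ and $\beta>0$; the stronger constraints $\alpha+\theta<0$ and $\beta-\nu>0$ that distinguish $\NmuD$ from $\sNmuD$ play no role here, since in the definitions of $\mathcal{S}$ and $\mathcal{U}$ the base point $n$ is fixed while only $k$ varies, so the nonuniformity factors $\mu(n)^{\sgn(n)\theta}$ and $\mu(n)^{\sgn(n)\nu}$ act as mere constants.

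For $\im \P\subset \mathcal{S}$, I would pick $(n,\xi)\in \im \P$, exploit idempotency of $\P(n)$ to write $\xi=\P(n)\xi$, and apply the first dichotomy inequality to get, for every $k\ge \max(n,0)$,
\[
\|\Phi(k,n)\xi\|=\|\Phi(k,n)\P(n)\xi\|\le K\left(\frac{\mu(k)}{\mu(n)}\right)^{\alpha} \mu(n)^{\sgn(n)\theta}\|\xi\|\le K\mu(n)^{\sgn(n)\theta}\|\xi\|,
\]
where the last step uses $\alpha<0$ together with the monotonicity of $\mu$ (so $\mu(k)/\mu(n)\ge 1$ for $k\ge n$, and raising to a negative exponent produces a factor $\le 1$). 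The remaining $k\in \mathbb{Z}^+$ with $k<n$ form a finite set, on which $\|\Phi(k,n)\xi\|$ takes only finitely many values and is therefore trivially bounded. Combining the two contributions yields $\sup_{k\in\mathbb{Z}^+}\|\Phi(k,n)\xi\|<+\infty$, i.e.~$(n,\xi)\in \mathcal{S}$.

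The inclusion $\ker \P\subset \mathcal{U}$ is the exact mirror image: for $(n,\xi)\in \ker \P$ I would use $[\Id-\P(n)]\xi=\xi$ and the second dichotomy inequality for $k\le n$, combining $\beta>0$ with $\mu(k)/\mu(n)\le 1$ to produce the $k$-independent bound $K\mu(n)^{\sgn(n)\nu}\|\xi\|$, and then dispatch the finitely many $k\in \mathbb{Z}^-$ above $n$ in the same trivial way. I do not foresee any genuine obstacle: the argument is essentially mechanical. The only mild point to remember is that the dichotomy inequalities are one-sided in $k$, so the finite tail of indices on the opposite side must be addressed separately — which, as noted, is automatic because a finite set of real numbers is always bounded.
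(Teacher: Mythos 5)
Your proposal is correct and follows essentially the same argument as the paper: write $\xi=\P(n)\xi$, apply the one-sided dichotomy estimate with $\alpha<0$ (resp.\ $\beta>0$) and the monotonicity of $\mu$ to get a $k$-independent bound, and dispose of the finitely many remaining indices in $\mathbb{Z}^+$ (resp.\ $\mathbb{Z}^-$) trivially. Your remark that the conditions $\alpha+\theta<0$ and $\beta-\nu>0$ are irrelevant here is accurate and consistent with the paper treating all three dichotomy notions at once.
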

\begin{proof}
    Consider $(n,\xi)\in \im\,\P$, that is $\P(n)\xi=\xi$. From the dichotomy assumption, there are constants $K\geq 1$, $\alpha<0$ and $\theta\geq 0$ such that for every $k\geq n$:
    \[\|\Phi(k,n)\xi\|=\|\Phi(k,n)\P(n)\xi\|\leq K\left(\frac{\mu(k)}{\mu(n)}\right)^\alpha\mu(n)^{\sgn(n)\theta}|\xi|\leq K\mu(n)^{\sgn(n)\theta}|\xi|.\]

Therefore, as the set $\{k\in \mathbb{Z}^+:k<n\}$ is either empty or finite, we conclude $(n,\xi)\in \mathcal{S}$. The other contention is proved similarly.
\end{proof}

Now we present two important properties that are key to discern the differences between the three types of dichotomy.

\begin{definition}\label{769}
We say that system \eqref{700} has the
\begin{itemize}
    \item [(i)] \textbf{Unbounded Solutions Property} $(\USP)$ if it has no nontrivial bounded solutions,
    \item [(ii)]\textbf{Unique Projector Property} $(\UPP)$ if there exists at most one invariant projector for which it admits dichotomy (either $\muD$, $\NmuD$, or $\sNmuD$).
\end{itemize}
\end{definition}

\begin{remark}
    {\rm
We emphasize that the previous definition is not limited to difference systems; it also extends to nonautonomous linear differential equations in the context of continuous time ($t\in\mathbb{R}$).}
\end{remark}

The following lemma relates these properties to some notions of dichotomy.

\begin{lemma}\label{705}
    If the system \eqref{700} admits $\muD$ or $\NmuD$ with invariant projector $n\mapsto \P(n)$, then $\im\,\P=\mathcal{S}$ and $\ker\P=\mathcal{U}$, thus $\mathcal{S}\oplus\mathcal{U}=\mathbb{Z}\times \mathbb{R}^d$. Moreover, in this case the system \eqref{700} has both $\USP$ and $\UPP$.
\end{lemma}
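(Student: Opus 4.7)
The plan is to rely on Lemma~\ref{704} for the easy inclusions $\im\,\P\subset\mathcal{S}$ and $\ker\,\P\subset\mathcal{U}$, and to prove the reverse inclusions $\mathcal{S}\subset\im\,\P$ and $\mathcal{U}\subset\ker\,\P$ directly. Since every $\muD$ is in particular a $\NmuD$ (taking $\theta=\nu=0$), it suffices to work under the $\NmuD$ hypothesis. Once $\mathcal{S}=\im\,\P$ and $\mathcal{U}=\ker\,\P$ are established, the Whitney decomposition follows from $\im\,\P(n)\oplus\ker\,\P(n)=\mathbb{R}^d$ at every fiber; $\USP$ follows because any bounded solution gives a point of $\mathcal{S}\cap\mathcal{U}=\im\,\P\cap\ker\,\P=\mathbb{Z}\times\{0\}$; and $\UPP$ follows because any two admissible invariant projectors $\P_1,\P_2$ must satisfy $\im\,\P_i(n)=\mathcal{S}(n)$ and $\ker\,\P_i(n)=\mathcal{U}(n)$ for every $n$, and a linear projector on $\mathbb{R}^d$ is uniquely determined by its image and kernel.

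The crux is the reverse inclusion $\mathcal{S}\subset\im\,\P$. For $(n,\xi)\in\mathcal{S}$ I would decompose $\xi=\P(n)\xi+\eta$ with $\eta=(\Id-\P(n))\xi\in\ker\,\P(n)$, and aim to conclude $\eta=0$. The stable estimate together with $\alpha<0$ and $\mu$ non-decreasing yields $\|\Phi(k,n)\P(n)\xi\|\leq K\mu(n)^{\sgn(n)\theta}\|\xi\|$ for every $k\geq n$, so the boundedness built into the definition of $\mathcal{S}$ forces $\sup_{k\geq n}\|\Phi(k,n)\eta\|<+\infty$. On the other hand, using the identity $\eta=\Phi(n,k)[\Id-\P(k)]\Phi(k,n)\eta$ (valid because $\Phi(k,n)\eta\in\ker\,\P(k)$ by invariance), together with the unstable estimate applied to the pair $(n,k)$ with $n\leq k$, one obtains the lower bound
\[\|\Phi(k,n)\eta\|\geq \frac{\|\eta\|}{K}\,\mu(n)^{-\beta}\,\mu(k)^{\beta-\nu},\qquad k\geq n\text{ large positive},\]
which blows up as $k\to+\infty$ because $\beta-\nu>0$ and $\mu(k)\to+\infty$, contradicting boundedness unless $\eta=0$. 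The inclusion $\mathcal{U}\subset\ker\,\P$ is entirely symmetric: writing $\zeta=\P(n)\xi$ for $(n,\xi)\in\mathcal{U}$, the unstable estimate shows $\Phi(k,n)[\Id-\P(n)]\xi$ remains bounded for $k\leq n$, so $\Phi(k,n)\zeta$ is bounded too; then inverting the stable estimate via $\zeta=\Phi(n,k)\P(k)\Phi(k,n)\zeta$ gives a lower bound of order $\mu(k)^{\alpha+\theta}$ as $k\to-\infty$, which explodes because $\alpha+\theta<0$ and $\mu(k)\to 0$, forcing $\zeta=0$.

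The delicate step, and the one that will fail for $\sNmuD$, is precisely the use of the strict inequalities $\alpha+\theta<0$ and $\beta-\nu>0$ to ensure that the reverse-time bounds genuinely blow up. Without them, the nonuniformity factors $\mu(k)^{\pm\theta},\mu(k)^{\pm\nu}$ can absorb or dominate the exponential-type terms $\mu(k)^{\alpha},\mu(k)^{\beta}$, and the lower bounds that pin $\eta$ and $\zeta$ down to $0$ collapse. This is consistent with Examples~\ref{707} and \ref{718}, where $\USP$ and $\UPP$ genuinely fail for slow nonuniform dichotomies, so the content of the lemma really lies in showing that the inequalities built into Def.~\ref{DefNmuD} are not cosmetic but exactly what the inversion argument consumes.
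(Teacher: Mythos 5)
Your proposal is correct and follows essentially the same route as the paper: both hinge on the identity $\xi_2=\Phi(n,k)[\Id-\P(k)]\Phi(k,n)\xi$ together with the strict inequalities $\beta-\nu>0$ (resp.\ $\alpha+\theta<0$) to force the kernel (resp.\ image) component of a bounded solution to vanish, and then deduce $\USP$ and $\UPP$ from $\im\,\P=\mathcal{S}$, $\ker\P=\mathcal{U}$ exactly as you describe. The only cosmetic difference is that you phrase the conclusion as a blow-up of a lower bound on $\|\Phi(k,n)\eta\|$, whereas the paper lets the corresponding upper bound on $\|\xi_2\|$ tend to zero as $k\to+\infty$; these are the same inequality rearranged.
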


\begin{proof}
Suppose the parameters in the dichotomy are $(\P;\alpha,\beta,\theta,\nu)$, where $\theta,\nu=0$ if it corresponds to a $\muD$. Choose $(n,\xi)\in \mathcal{S}$. By definition, there exist a constant $c>0$ such that
\begin{equation}\label{760}
    \|\Phi(k,n)\xi\|\leq c \quad\text{ for all }\,k\geq 0.
\end{equation}

    Consider $\xi=\xi_1+\xi_2$ with $\xi_1\in \im\,\P(n)$ and $\xi_2\in \ker\P(n)$. Then, for every $k\in\mathbb{Z}$ we have
    \begin{align*}
        \xi_2&=[\Id-\P(n)]\xi\\
        &=[\Id-\Phi(n,k)\Phi(k,n)\P(n)]\xi\\
        &=[\Id-\Phi(n,k)\P(k)\Phi(k,n)]\xi\\
        &=\Phi(n,k)[\Id-\P(k)]\Phi(k,n)\xi.
    \end{align*}

Thus, using \eqref{760} along with the dichotomy, for $k\geq 0$ we have
\[\|\xi_2\|\leq K\left(\frac{\mu(n)}{\mu(k)}\right)^\beta\mu(k)^\nu\|\Phi(k,n)\xi\|\leq K\,c\mu(k)^{-(\beta-\nu)}\mu(n)^{\beta},\qquad\forall k\geq n,\]
hence $\xi_2=0$, since $k$ can range free in $\mathbb{Z}^+$ and $\beta-\nu>0$. Therefore $\xi=\xi_1\in \im\,\P(n)$, which implies $(n,\xi)\in \im\,\P$. Now, as we already know $\im\,\P\subset\mathcal{S}$ from Lem. \ref{704}, we conclude $\im\,\P=\mathcal{S}$. 

We analogously prove $\ker\P=\mathcal{U}$. As these two identities uniquely define the projector, then the $\UPP$ is verified. Finally, from these identities we have $\mathcal{S}\cap\mathcal{U}=\im\,\P\cap\ker\P=\mathbb{Z}\cap \{0\}$. In other words, the only solution which is bounded in both $\mathbb{Z}^-$ and $\mathbb{Z}^+$ is the trivial one, hence the system has the $\USP$. 
\end{proof}

The previous lemma does not extend to the $\sNmuD$ case. Indeed, consider the following example:

\begin{example}\label{707}
    {\rm
    Consider the quadratic exponential growth rate $n\mapsto \qeg(n)$ defined on  \eqref{740}. Consider as well the scalar difference equation 
    $$x(n+1)=e^{-2n-1}x(n), \qquad\,n\in \mathbb{Z}.$$ 
    
    It is immediate to verify that its transition matrix is $\Phi(k,n)=e^{-(k^2-n^2)}$, $\forall \,k,n\in \mathbb{Z}$. In particular, the map $n\mapsto e^{-n^2}$ is a bounded nontrivial solution, thus this system does not have the $\USP$.

    On the other hand, for $k\geq n\geq 0$ we have
\begin{equation}\label{768}
    \Phi(k,n)=e^{-(k^2-n^2)}=\left(\frac{\qeg(k)}{\qeg(n)}\right)^{-1}\leq \left(\frac{\qeg(k)}{\qeg(n)}\right)^{-1}\qeg(n)^{\sgn(n)2},
\end{equation}
    while for $0\geq k\geq n$ we have $-2k^2+2n^2\leq 2n^2$, which implies 
    \[-(k^2-n^2)\leq -(n^2-k^2)+2n^2,\]
therefore
\[\Phi(k,n)=e^{-(k^2-n^2)}\leq e^{-(n^2-k^2)+2n^2}=\left(\frac{\qeg(k)}{\qeg(n)}\right)^{-1}\qeg(n)^{\sgn(n)2},\]
and finally, for $k\geq 0 \geq n$ we have
\begin{align*}
    \Phi(k,n)&=\Phi(k,0)\Phi(0,n)\\
    &\leq\left(\frac{\qeg(k)}{\qeg(0)}\right)^{-1}\qeg(0)^{\sgn(0)2}\left(\frac{\qeg(0)}{\qeg(n)}\right)^{-1}\qeg(n)^{\sgn(n)2}\\
    &=\left(\frac{\qeg(k)}{\qeg(n)}\right)^{-1}\qeg(n)^{\sgn(n)2}.
\end{align*}
    
In conclusion,  the following estimation is verified
\begin{equation}\label{763}
    \Phi(k,n)\leq \left(\frac{\qeg(k)}{\qeg(n)}\right)^{-1}\qeg(n)^{\sgn(n)2},\qquad \forall\,k\geq n,
\end{equation}
    hence the system admits $\sNmuD$ with parameters $(\Id;-1,*,2,*)$. In other words, the system has $\sNmuD$, but does not  present the $\USP$. Moreover, by a similar argument, it is verified that the system also admits the estimation
    \begin{equation}\label{764}
        \Phi(k,n)\leq \left(\frac{\qeg(k)}{\qeg(n)}\right)^{1}\qeg(n)^{\sgn(n)2},\qquad \forall\,k\leq n,
    \end{equation}
       thus the system also admits $\sNmuD$ with parameters $(0;*,1,*,2)$, which means that the $\UPP$ is not verified either, since there are two different projectors for which the system has $\sNmuD$.
    }
\end{example}

\begin{remark}
    {\rm
    The previous example has a differential counterpart on the equation
    \[\dot{x}=-2tx,\qquad t\in \mathbb{R},\] 
    since its evolution operator is given by $\Phi(t,s)=e^{-(t^2-s^2)}$, $\forall\,t,s\in \mathbb{R}$. Therefore, the previous estimations are also verified in this context. In other words, this differential equation also has $\sNmuD$ (with an analogous notion of quadratic exponential growth rate for continuous time), without verifying $\USP$ nor $\UPP$.
    }
\end{remark}

The following example illustrates that the lack of $\USP$ and $\UPP$ is an inherent issue with the definition of $\sNmuD$, and does not depend explicitly on any characteristic of the quadratic exponential growth rate, but is also present for the classic exponential rate.

\begin{example}\label{718}
{\rm
Consider the exponential growth rate $n\mapsto e^n$. Consider the map $A:\mathbb{Z}\to \mathbb{R}$, given by $A(n)=e$ for $n<-1$ and $A(n)=1/e$ for $n\geq -1$, and the equation $x(n+1)=A(n)x(n)$. A fundamental operator in this case is given by
   \begin{equation*}
		X(n)= \left\{ \begin{array}{lcc}
		e^{-n} &  \text{ if } &   n\geq 0, \\
		e^n & \text{ if }& n< 0.
		\end{array}
		\right.
		\end{equation*}

In particular, $X$ is a bounded nontrivial solution, therefore this system does not present the $\USP$. Denote the transition matrix for this system by $\Phi$. For $k\geq n\geq 0$ we have
\[\Phi(k,n)=e^{-(k-n)}\leq e^{-(k-n)+2|n|}, \]
while for $0\geq k\geq n$ we have $2k\leq 0$ and $|n|=-n$, which implies 
\[(k-n)\leq -(k-n)+2|n|.\]
Therefore
\[\Phi(k,n)=e^{k-n}\leq e^{-(k-n)+2|n|}.\]
Finally, for $k\geq 0\geq n$ we have
\begin{align*}
    \Phi(k,n)&=\Phi(k,0)\Phi(0,n)\\
    &\leq e^{-(k-0)+2|0|}e^{-(0-n)+2|n|}\\
    &=e^{-(k-n)+2|n|}.
\end{align*}

In conclusion, we deduce
\[\Phi(k,n)\leq e^{-(k-n)+2|n|},\qquad \forall\,k\geq n.\]
Thus, the system has $\sNmuD$ with parameters $(\Id;-1,*,2,*)$, while admitting a bounded nontrivial solution. Moreover, by an analogous computation we can conclude that for $k\leq n$ we have 
\[\Phi(k,n)\leq e^{k-n+2|n|},\]
thus the system has $\sNmuD$ with parameters $(0;*,1,*,2)$. Therefore, once again we have a system that exhibits $\sNmuD$ but does not verify either $\USP$ nor $\UPP$.
}
\end{example}

\begin{remark}
    {\rm
We can present a differential  system with a behavior similar to the previous example. Consider the map
   \begin{equation*}
		A(t)= \left\{ \begin{array}{lcc}
	-1&\text{ if }&t>\frac{\pi}{2}, \\
		-\sin(t) & \text{ if }& -\frac{\pi}{2}<t< \frac{\pi}{2},\\
        	1 &  \text{ if } &   t\leq -\frac{\pi}{2}.
		\end{array}
		\right.
		\end{equation*}

The reader can easily verify that by considering the continuous exponential growth rate, the equation $\dot{x}=A(t)x$ verifies $\sNmuD$, but does not have either $\USP$ nor $\UPP$.
    }
\end{remark}

As we can see, the notion of slow nonuniform dichotomy generally fails to exhibit the key characteristic that appropriately represent the notion of dichotomy as a consistent generalization of the autonomous concept of hyperbolicity, as the most common interpretation of this idea is the existence of a clear and uniquely defined splitting between stable and unstable solutions.

A sufficient condition for obtaining these two important properties, $\USP$ and $\UPP$, is to consider either the definitions of $\muD$ or $\NmuD$, as shown in Lem.~\ref{705}. However, in the next example we illustrate that this is far from a necessary condition.

\begin{example}\label{708}
    {\rm
Choose $\omega,a>0$ such that $3a>\omega>2a$ and consider the following difference equation on $\mathbb{Z}$
\[x(n+1)=\exp\left(-\omega+a(n+1)\cos(n+1)-an\cos(n)-a\sin(n+1)+a\sin(n)\right)x(n).\]

The transition matrix for this equation is given by
\[\Phi(k,n)=\exp\left(-\omega(k-n)+ak\cos(k)-an\cos(n)-a\sin(k)+a\sin(n)\right),\]
for all $k,n\in\mathbb{Z}$. If we consider the exponential growth rate, this system admits a $\sNED$ (slow nonuniform exponential dichotomy) with parameters $(\Id;-\omega+a,*,2a,*)$. In particular, the condition $\omega<3a$ implies that this $\sNED$ is not $\NED$. Nevertheless, it is a straightforward computation to verify that this system presents both the $\USP$ as every nontrivial solution is unbounded on $\mathbb{Z}^-$, and the $\UPP$, as it has dichotomy with identity projector, but it does not present dichotomy with projector zero (and these are the only options on dimension 1). 

We highlight that the previous system is the discrete analogous to the differential equation $\dot{x}=(-\omega-a t\sin t)x(t)$, for $t\in \mathbb{R}$, since it has the same evolution operator (but with continuous variables), which was introduced on \cite[Prop.~2.3]{BV} .
    }
\end{example}

\subsection{USPP Conjecture}

In light of the previous example, we propose a conjecture that links the $\USP$, $\UPP$, and the usual decomposition of invariant manifolds. So far, we have neither been able to prove it nor find a counterexample.

\begin{conjecture}\label{750}{\rm (USPP conjecture)}
    Assume that system \eqref{700} admits $\sNmuD$. Then, it exhibits the $\USP$ if and only if it exhibits the $\UPP$. Moreover, in this case $\im\, \P=\mathcal{S}$ and $\ker \P=\mathcal{U}$.
\end{conjecture}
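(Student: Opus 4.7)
The plan is to prove the equivalence in two separate directions: the tractable implication $\USP\Rightarrow\UPP$, which simultaneously yields $\im\,\P=\mathcal{S}$ and $\ker\P=\mathcal{U}$, and the substantive converse $\UPP\Rightarrow\USP$. The forward direction adapts the argument used in Lem.~\ref{705}, while the converse is where the main obstacle lies.

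For $\USP\Rightarrow\UPP$, fix an invariant projector $\P$ realizing a $\sNmuD$ and pick any $(n_0,\xi)\in\mathcal{S}$. Decompose $\xi=\xi_s+\xi_u$ with $\xi_s=\P(n_0)\xi\in\im\,\P(n_0)$ and $\xi_u=[\Id-\P(n_0)]\xi\in\ker\P(n_0)$. By Lem.~\ref{704} one has $\xi_s\in\mathcal{S}(n_0)$ and $\xi_u\in\mathcal{U}(n_0)$. Since $\mathcal{S}(n_0)$ is a linear subspace, $\xi_u=\xi-\xi_s\in\mathcal{S}(n_0)$, so the solution through $(n_0,\xi_u)$ is bounded on all of $\mathbb{Z}$; by $\USP$ it must be trivial, i.e.\ $\xi_u=0$ and $\xi\in\im\,\P(n_0)$. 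This gives $\mathcal{S}\subset\im\,\P$, and combined with Lem.~\ref{704} produces $\im\,\P=\mathcal{S}$; the symmetric argument yields $\ker\P=\mathcal{U}$. Since a projector is determined by its image and kernel, any two projectors realizing $\sNmuD$ must coincide, establishing $\UPP$.

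For the converse I would argue by contrapositive. Assume $\neg\USP$, so there exists a nonzero $v\in\mathcal{S}(n_0)\cap\mathcal{U}(n_0)$, and decompose $v=v_s+v_u$ via $\P(n_0)$; running the previous argument shows $v_s\in\im\,\P(n_0)\cap\mathcal{S}(n_0)\cap\mathcal{U}(n_0)$ and $v_u\in\ker\P(n_0)\cap\mathcal{S}(n_0)\cap\mathcal{U}(n_0)$, with at least one of them nonzero; assume without loss of generality $v_u\neq 0$. The natural candidate for a second projector $\P'$ is obtained by choosing a complement $W_0$ of the line through $v_u$ in $\ker\P(n_0)$, declaring $\P'(n_0)$ to be the identity on $\im\,\P(n_0)\oplus\mathbb{R}v_u$ and zero on $W_0$, and then transporting this splitting invariantly along the cocycle $\Phi$.

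The genuine difficulty is then to verify that $\P'$ actually realizes a $\sNmuD$. On the line through $v_u$ the forward estimate for $\P'$ would require $\|\Phi(k,n_0)v_u\|\le K'(\mu(k)/\mu(n_0))^{\alpha'}\mu(n_0)^{\sgn(n_0)\theta'}$ for all $k\ge n_0$, whose right-hand side tends to $0$ as $k\to+\infty$ since $\alpha'<0$. However, the hypothesis $v_u\in\mathcal{S}$ only supplies boundedness of $\Phi(k,n_0)v_u$ on $\mathbb{Z}^+$, and nothing in Def.~\ref{DefNmuD} a priori forces a bounded direction sitting in $\ker\P(n_0)$ to decay at any prescribed rate. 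The strategy to close the gap would be to exploit the cocycle identities $\Phi(k,n_0)v_u=\Phi(k,n)\Phi(n,n_0)v_u$ for large intermediate $n$, combined with the backward $\ker\P$ estimate applied along segments where the orbit returns, in order to extract an admissible pair $(\alpha',\theta')$ and, if needed, a finer growth rate $\mu'$ for which $\P'$ becomes $\sNmuD$. If such extra decay cannot be forced in general, the plan should pivot to searching for an explicit counterexample --for instance through a one-dimensional, piecewise construction in the spirit of Ex.~\ref{718}, possibly combined with the quadratic exponential growth rate of Ex.~\ref{707}-- exhibiting a system with $\sNmuD$ for a unique projector that nevertheless admits nontrivial bounded solutions.
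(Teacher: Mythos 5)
First, be aware that the statement you are proving is posed in the paper as an \emph{open conjecture}: the authors state explicitly that they have neither a proof nor a counterexample, and their only progress is the conditional Lemma~\ref{703}, which derives $\USP\Rightarrow\UPP$ from the auxiliary Conjecture~\ref{744}. So there is no proof in the paper to compare against, and your proposal is an attempt at an open problem.

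Your forward direction $\USP\Rightarrow\UPP$ (together with $\im\,\P=\mathcal{S}$ and $\ker\P=\mathcal{U}$) appears correct, and it is worth noting that it is \emph{unconditional}, i.e.\ stronger than Lemma~\ref{703}: the proof of Lemma~\ref{705} uses the quantitative bound $\mu(k)^{-(\beta-\nu)}\to 0$ (hence the hypothesis $\beta-\nu>0$) only to force $\xi_2=0$, and you replace that step by observing that $\xi_u=[\Id-\P(n_0)]\xi$ lies in $\mathcal{S}(n_0)\cap\mathcal{U}(n_0)$ (by Lemma~\ref{704} plus linearity of the fibers) and then invoking $\USP$ directly. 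This gives $\im\,\P=\mathcal{S}$ and $\ker\P=\mathcal{U}$ for \emph{every} projector realizing $\sNmuD$, hence uniqueness, without ever touching Conjecture~\ref{744}. The only caveat is that, when $A(n)$ may be singular, membership of $\xi_s$ in the fiber $\mathcal{U}(n_0)$ presupposes that its backward orbit is defined; this ambiguity is already present in the paper's definition of $\mathcal{U}$ and is not a defect introduced by your argument.

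The converse $\UPP\Rightarrow\USP$ is where the proposal genuinely fails, as you yourself acknowledge. The unproved step is precisely that the transported projector $\P'$, obtained by moving the line through $v_u$ to the stable side, satisfies the $\sNmuD$ estimates: even in the slow case one requires $\alpha'<0$, so $\|\Phi(k,n)\P'(n)\|$ must be dominated by $(\mu(k)/\mu(n))^{\alpha'}\mu(n)^{\sgn(n)\theta'}$, a quantity that tends to $0$ as $k\to+\infty$ and whose nonuniform factor depends only on the base point $n$. Mere boundedness of $k\mapsto\Phi(k,n_0)v_u$ on $\mathbb{Z}^+$ yields neither the decay nor any uniformity over $n$ along the orbit, and the suggested repair via cocycle identities does not visibly produce an admissible pair $(\alpha',\theta')$. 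Since this is exactly the obstruction the authors could not overcome, the proposal establishes only one implication of the conjecture; the other implication, and hence the conjecture itself, remains open. I would encourage you to write up the forward implication separately, since it removes the dependence on Conjecture~\ref{744} from Lemma~\ref{703}.
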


As shown in Lem.~\ref{705}, this conjecture is verified when the slow dichotomy corresponds to either a $\muD$ or a $\NmuD$. Therefore, the interest of this proposed problem lies in studying its validity when there is a $\sNmuD$ that is not $\NmuD$.

As a first step in the direction of proving this conjecture, we present the following idea:

\begin{conjecture}\label{744}
    {\rm 
    If both $n\mapsto\P(n)$ and $n\mapsto\widetilde{\P}(n)$ are invariant projectors for \eqref{700} with which the system has $\sNmuD$, then they commute, {\it i.e.} $\P(n)\widetilde{\P}(n)=\widetilde{\P}(n)\P(n)$ for every $n\in \mathbb{Z}$. Moreover, in that case, one of the following holds:
    \begin{itemize}
        \item [(i)] for every $n\in \mathbb{N}$ there is some $\xi\in \mathbb{R}^d\setminus\{0\}$ such that $\P(n)\xi=\xi$ and $\widetilde{\P}(n)\xi=0$.
        \item [(ii)] $\P(n)=\widetilde{\P}(n)$ for every $n\in \mathbb{N}$.
    \end{itemize}
  
    }
\end{conjecture}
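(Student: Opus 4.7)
I would establish commutativity by a bounded-orbit argument in the spirit of Lem.~\ref{705}, and then derive alternatives (i)/(ii) from a rank analysis of the four intersection subspaces produced by the (now commuting) projectors. The algebraic starting point is the identity
\[\P(n)\widetilde{\P}(n) - \widetilde{\P}(n)\P(n) = \P(n)\widetilde{\P}(n)(\Id - \P(n)) - (\Id - \P(n))\widetilde{\P}(n)\P(n),\]
which, via the Whitney decomposition $\mathbb{R}^d = \im\,\P(n)\oplus\ker\,\P(n)$, splits the commutator into a piece $\zeta(n) := \P(n)\widetilde{\P}(n)(\Id-\P(n))\xi\in\im\,\P(n)$ and a piece $\eta(n) := (\Id-\P(n))\widetilde{\P}(n)\P(n)\xi\in\ker\,\P(n)$. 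It then suffices to prove $\eta\equiv\zeta\equiv 0$ for every $\xi$.

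Focus on $\eta$. Since $\eta(n)\in\ker\,\P(n)$, the $\P$-dichotomy gives the backward decay $\|\Phi(k,n)\eta(n)\|\to 0$ as $k\to-\infty$. Writing $\eta = \widetilde{\P}\P\xi - \P\widetilde{\P}\P\xi$, the first summand lies in $\im\,\widetilde{\P}$ and decays forward via the $\widetilde{\P}$-dichotomy, while the second lies in $\im\,\P$ and decays forward via the $\P$-dichotomy; together they force $\Phi(k,n)\eta(n)\to 0$ as $k\to+\infty$. Hence $\Phi(\cdot, n)\eta(n)$ is globally bounded, i.e.\ $\eta(n)\in\mathcal{S}(n)\cap\mathcal{U}(n)$, and a symmetric computation handles $\zeta$. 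Consequently, under the $\USP$ both pieces vanish and commutativity follows. The main obstacle is the strictly $\sNmuD$ regime where $\USP$ may fail (cf.\ Ex.~\ref{707}, \ref{718}): the argument then only produces a globally bounded orbit of $\eta$, not $\eta = 0$. Closing this gap seems to require combining \emph{both} dichotomies on the same piece $\eta$, playing $\alpha,\walpha$ against $\beta,\wbeta$ and the nonuniformities $\theta,\wtheta,\nu,\wnu$, to exclude nontrivial bounded vectors trapped in $\ker\,\P\cap\im\,\widetilde{\P}$. I expect this to be the hardest step, and likely the same obstruction that has blocked Conj.~\ref{750}.

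Once commutativity holds, at each $n$ the four subspaces $\im\,\P(n)\cap\im\,\widetilde{\P}(n)$, $\im\,\P(n)\cap\ker\,\widetilde{\P}(n)$, $\ker\,\P(n)\cap\im\,\widetilde{\P}(n)$ and $\ker\,\P(n)\cap\ker\,\widetilde{\P}(n)$ decompose $\mathbb{R}^d$ as a direct sum and are each invariant under $\Phi$. Case (i) is precisely the nontriviality of $\im\,\P(n)\cap\ker\,\widetilde{\P}(n)$ for every $n\in\mathbb{N}$. If this fails at some $n_0\in\mathbb{N}$, then $\P(n_0)(\Id-\widetilde{\P}(n_0)) = 0$, so $\P(n_0) = \P(n_0)\widetilde{\P}(n_0)$ and $\im\,\P(n_0)\subset\im\,\widetilde{\P}(n_0)$; constancy of rank of invariant projectors together with invariance along $\Phi$ propagates the inclusion to every $n\in\mathbb{N}$. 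Repeating the argument with the invariant projectors $\Id-\P$ and $\Id-\widetilde{\P}$ (which themselves yield $\sNmuD$) gives the matching inclusion of kernels, and together these force $\P(n) = \widetilde{\P}(n)$ for all $n\in\mathbb{N}$, i.e.\ alternative (ii).
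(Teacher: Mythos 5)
This statement is labeled as a conjecture in the paper, and the authors explicitly state that they have been unable either to prove it or to find a counterexample; there is therefore no proof in the paper to compare against, and your submission---candidly presented as a plan with an acknowledged gap---is likewise not a proof. To your credit, you locate the genuine obstruction correctly: your bounded-orbit argument shows that the commutator pieces $\eta(n)\in\ker\P(n)$ and $\zeta(n)\in\im\P(n)$ generate globally bounded solutions, which kills them only under the $\USP$; in the strictly $\sNmuD$ regime (Ex.~\ref{707}, \ref{718}) this is exactly where the argument stalls, and it is the same circularity that blocks Conj.~\ref{750}, since Conj.~\ref{744} is intended as a stepping stone toward the $\USPP$ conjecture rather than a consequence of it.

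However, your final paragraph overstates what follows once commutativity is granted, and contains two concrete errors. First, $\Id-\P$ is \emph{not} an invariant projector with which \eqref{700} admits $\sNmuD$: swapping $\P$ for $\Id-\P$ interchanges the forward and backward estimates in Def.~\ref{DefNmuD}, so $\Id-\P$ is a dichotomy projector for the time-reversed system, not for \eqref{700}; the "matching inclusion of kernels" therefore does not come for free. Second, and more importantly, the alternative (i)/(ii) does not reduce to a rank bookkeeping exercise. If $\im\,\P(n_0)\cap\ker\,\widetilde{\P}(n_0)=\{0\}$ you correctly get $\im\,\P(n_0)\subseteq\im\,\widetilde{\P}(n_0)$, but if $\rank\P<\rank\widetilde{\P}$ a dimension count forces $\ker\P(n_0)\cap\im\,\widetilde{\P}(n_0)\neq\{0\}$, and a nonzero vector there yields precisely a nontrivial bounded solution (backward decay from the $\P$-dichotomy, forward decay from the $\widetilde{\P}$-dichotomy) without contradicting the triviality of $\im\,\P\cap\ker\widetilde{\P}$. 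In that scenario neither (i) nor (ii) holds, so ruling it out again requires excluding nontrivial bounded solutions---the very step you flagged as missing in the commutativity part. In short, both halves of the conjecture, not just the first, hinge on the open obstruction, and your plan should not present the second half as settled modulo the first.
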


Once again, so far we have not been able to either prove or find a counterexample to this conjecture. If it was true, then we can consider the following lemma, which leads to a partial achievement of Conjecture \ref{750}

\begin{lemma}\label{703}
Assume Conjecture \ref{744} holds. If the system \eqref{700} admits $\sNmuD$ with parameters $(\P;\alpha,\beta,\theta,\nu)$, and if it has the $\USP$, then it also verifies the $\UPP$. Moreover, in this case $\im\, \P=\mathcal{S}$ and $\ker \P=\mathcal{U}$.
\end{lemma}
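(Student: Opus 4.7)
The plan is to proceed by contradiction for the $\UPP$ claim. Suppose there exist two distinct invariant projectors $\P$ and $\widetilde{\P}$ for which \eqref{700} admits $\sNmuD$, with parameters $(\P;\alpha,\beta,\theta,\nu)$ and $(\widetilde{\P};\widetilde{\alpha},\widetilde{\beta},\widetilde{\theta},\widetilde{\nu})$ respectively. Invoking Conjecture \ref{744}, the two projectors commute pointwise and we are in one of the alternatives (i) or (ii). I will deal with (i) by producing a nontrivial bounded solution, and I will rule out (ii) by propagating the equality on $\mathbb{N}$ backwards to all of $\mathbb{Z}$, contradicting $\P\neq\widetilde{\P}$.

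In case (i), fix some $n\in\mathbb{N}$ and $\xi\neq 0$ with $\P(n)\xi=\xi$ and $\widetilde{\P}(n)\xi=0$. Applying the stable estimate of the $\P$-dichotomy to the identity $\Phi(k,n)\xi=\Phi(k,n)\P(n)\xi$ for $k\geq n$, together with $\alpha<0$ and $\mu(k)/\mu(n)\geq 1$, yields $\|\Phi(k,n)\xi\|\leq K\mu(n)^{\sgn(n)\theta}\|\xi\|$. Applying the unstable estimate of the $\widetilde{\P}$-dichotomy to $\Phi(k,n)\xi=\Phi(k,n)[\Id-\widetilde{\P}(n)]\xi$ for $k\leq n$, together with $\widetilde{\beta}>0$ and $\mu(k)/\mu(n)\leq 1$, yields $\|\Phi(k,n)\xi\|\leq \widetilde{K}\mu(n)^{\sgn(n)\widetilde{\nu}}\|\xi\|$. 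Hence $k\mapsto\Phi(k,n)\xi$ is a nontrivial solution bounded on all of $\mathbb{Z}$, contradicting the $\USP$. For case (ii), assume inductively that $\P(n+1)=\widetilde{\P}(n+1)$. Invariance gives $A(n)\P(n)=\P(n+1)A(n)=\widetilde{\P}(n+1)A(n)=A(n)\widetilde{\P}(n)$, so $\im(\P(n)-\widetilde{\P}(n))\subset\ker A(n)$. A standard consequence of the invariance condition (using that $A(n)|_{\ker\P(n)}$ and $A(n)|_{\ker\widetilde{\P}(n)}$ are isomorphisms) is $\ker A(n)\subset\im\P(n)\cap\im\widetilde{\P}(n)$. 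Using the commutativity furnished by Conjecture~\ref{744}, decompose $\mathbb{R}^d$ into the joint eigenspaces $V_{ij}$ of $(\P(n),\widetilde{\P}(n))$; then $\im(\P(n)-\widetilde{\P}(n))=V_{10}\oplus V_{01}$ while $\im\P(n)\cap\im\widetilde{\P}(n)=V_{11}$, forcing $V_{10}=V_{01}=\{0\}$, i.e.\ $\P(n)=\widetilde{\P}(n)$. By backward induction, $\P=\widetilde{\P}$ on $\mathbb{Z}$, contradicting distinctness.

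For the manifold identities, Lemma~\ref{704} already gives $\im\P\subset\mathcal{S}$ and $\ker\P\subset\mathcal{U}$, so only the reverse inclusions need proof. Given $(n,\xi)\in\mathcal{S}$, decompose $\xi=\xi_1+\xi_2$ with $\xi_1=\P(n)\xi$ and $\xi_2=[\Id-\P(n)]\xi$. Lemma~\ref{704} yields boundedness of $\Phi(\cdot,n)\xi_1$ on $\mathbb{Z}^+$, so $\Phi(\cdot,n)\xi_2=\Phi(\cdot,n)\xi-\Phi(\cdot,n)\xi_1$ is bounded on $\mathbb{Z}^+$. On $\{k\leq n\}$, the unstable estimate of the $\sNmuD$ gives $\|\Phi(k,n)\xi_2\|\leq K\mu(n)^{\sgn(n)\nu}\|\xi_2\|$ (using $\beta>0$ and monotonicity of $\mu$). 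Patching over the finite intermediate range, $\Phi(\cdot,n)\xi_2$ is bounded on all of $\mathbb{Z}$; by $\USP$, $\xi_2=0$ and $\xi\in\im\P(n)$. The inclusion $\mathcal{U}\subset\ker\P$ follows by the symmetric argument, using the stable estimate for $\xi_1$ on $\{k\geq n\}$ and boundedness on $\mathbb{Z}^-$ to conclude $\xi_1=0$.

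The main obstacle is the backward extension in case (ii): without the commutativity assertion of Conjecture~\ref{744}, one cannot rule out a nontrivial nilpotent difference $\P(n)-\widetilde{\P}(n)$ supported on $\ker A(n)$, so this is precisely the place where the full strength of the conjecture enters. The remaining steps (case (i) and the manifold identities) are direct manipulations of the dichotomy estimates, exploiting that $\alpha<0$ and $\beta>0$ alone (without the convergence-vs-nonuniformity conditions $\alpha+\theta<0$, $\beta-\nu>0$ that distinguish $\NmuD$ from $\sNmuD$) already suffice to get constant-in-$k$ bounds, thanks to the monotonicity of $\mu$ and the freedom to pay the factor $\mu(n)^{\sgn(n)\theta}$ or $\mu(n)^{\sgn(n)\nu}$ that does not depend on $k$.
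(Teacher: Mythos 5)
Your proof is correct, and its core coincides with the paper's: invoke Conjecture \ref{744} to extract, from two distinct projectors, a nonzero vector $\xi\in\im\,\P(n)\cap\ker\widetilde{\P}(n)$, whose orbit is bounded on all of $\mathbb{Z}$ by the two dichotomy estimates (this is exactly the content of Lemma \ref{704}), contradicting the $\USP$. You go beyond the paper's proof in two respects, both of which are genuine improvements rather than detours. First, you observe that alternative (ii) of Conjecture \ref{744} only asserts $\P(n)=\widetilde{\P}(n)$ for $n\in\mathbb{N}$, so "$\P\neq\widetilde{\P}$ forces alternative (i)" is not automatic; your backward induction --- combining $\im(\P(n)-\widetilde{\P}(n))\subset\ker A(n)\subset\im\,\P(n)\cap\im\,\widetilde{\P}(n)$ with the joint eigenspace decomposition of two commuting idempotents to force $\P(n)=\widetilde{\P}(n)$ --- correctly closes a step that the paper's proof silently skips. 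Second, the paper offers no argument for the "moreover" clause $\im\,\P=\mathcal{S}$ and $\ker\P=\mathcal{U}$; your argument supplies it, and does so in the only way available in the slow nonuniform setting: the decay factor $\mu(k)^{-(\beta-\nu)}\to 0$ used in the proof of Lemma \ref{705} is not at hand when $\beta-\nu>0$ may fail, so one must instead conclude $\xi_2=0$ from the $\USP$ after patching the two constant-in-$k$ bounds over the finite intermediate range, exactly as you do.
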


\begin{proof}
Suppose that system \eqref{700} has $\sNmuD$ but does not have the $\UPP$. This implies that it has $\sNmuD$ with two invariant projectors $n\mapsto \P(n)$ and $n\mapsto \widetilde{\P}(n)$ such that $\P\neq \widetilde{\P}$.  From Conjecture \ref{744}, for a fixed $n\in\mathbb{Z}$ there is some $\xi\in \mathbb{R}^d$ satisfying $\P(n)\xi=\xi$ and $\widetilde{\P}(n)\xi=0$, or in other words $ (n,\xi)\in  \im\P\cap \ker \widetilde{\P}$ with $\xi\neq 0$. By Lem.~\ref{704} we have $(n,\xi)\in \mathcal{S}\cap \mathcal{U}$. Therefore, the solution $k\mapsto \Phi(k,n)\xi$ is bounded in both $\mathbb{Z}^-$ and $\mathbb{Z}^+$, {\it i.e.} it is a nontrivial bounded solution, which contradicts the $\USP$. 
\end{proof}

\section{Spectrum and Spectral Theorems}

In this section, we present the notions of spectrum associated to the different types of dichotomy studied above. Our goal is to derive their respective spectral theorems. The first auxiliary concept we require is that of a weighted system, which serves as a tool for modifying systems in order to explore their spectra.

\begin{definition}
Let $\mu$ be a discrete growth rate. Given a real number $\gamma$, we define the \textbf{($\mu,\gamma$)-weighted} system (or simply $\gamma$-weighted system, if the growth rate is clear) associated to \eqref{700} as
\begin{equation}\label{701}
    x(k+1)=A(k)\left(\frac{\mu(k+1)}{\mu(k)}\right)^{-\gamma}x(k),\qquad k\in \mathbb{Z}.
\end{equation}
\end{definition}

Note that the evolution operator for the $\gamma$-weighted system is given by
\begin{equation}\label{761}
    \Phi_{\mu,\gamma}(k,n)=\left(\frac{\mu(k)}{\mu(n)}\right)^{-\gamma}\Phi(k,n),\qquad k,n\in \mathbb{Z},
\end{equation}
where $\Phi(k,n)$ is the evolution operator of the system \eqref{700}. If the growth rate $\mu$ is clear, we just denote it by $\Phi_\gamma$. Employing this concept we can define the different spectra of dichotomy.

\begin{definition}
    The \textbf{nonuniform $\mu$-dichotomy spectrum} of \eqref{700} is the set
    \[\Sigma_\NmuD(A):=\{\gamma\in \mathbb{R}: \eqref{701}\text{ does not admit }\NmuD\}.\]
   Moreover, its complement is the set $\rho_\NmuD(A)=\mathbb{R}\setminus \Sigma_\NmuD(A)$ called the \textbf{nonuniform $\mu$-resolvent set}. Analogously we define 
   \begin{itemize}
       \item [(i)] the \textbf{uniform $\mu$-dichotomy spectrum} $\Si_\muD(A)$,
       \item [(ii)] the \textbf{uniform $\mu$-resolvent set} $\rho_\muD(A)=\mathbb{R}\setminus \Si_\muD(A)$,
       \item [(iii)] the \textbf{slow nonuniform $\mu$-dichotomy spectrum} $\Si_\sNmuD(A)$, and
       \item [(iv)] the \textbf{slow nonuniform $\mu$-resolvent set} $\Si_\sNmuD(A)=\mathbb{R}\setminus\Si_\sNmuD(A)$.
   \end{itemize}  

\end{definition}

As every $\muD$ is in particular a $\NmuD$, which is itself a case of $\sNmuD$, we have the following inclusions:
\begin{equation}\label{741}
    \rho_\muD(A)\subset \rho_\NmuD(A)\subset \rho_\sNmuD(A).
\end{equation}
and consequently
\begin{equation}\label{742}
    \Sigma_\sNmuD(A)\subset \Sigma_\NmuD(A)\subset \Sigma_\muD(A).
\end{equation}

We call each connected component of $\rho$, for $\rho$ being either $\rho_\muD$ or $\rho_\NmuD$ or $\rho_\sNmuD$, a \textbf{spectral gap.} In the following we study the basic descriptions of these spectra. In the remainder of the section, unless stated otherwise, we fix a growth rate $\mu$. To begin, we prove that all three spectra are closed sets by showing all resolvent sets are open.

\begin{lemma}\label{706}
    The resolvent sets $\rho_\muD(A)$, $\rho_\NmuD(A)$ and $\rho_\sNmuD(A)$ are open.
\end{lemma}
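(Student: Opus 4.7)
The plan is to show openness by proving that every point of a resolvent set has an open neighborhood contained in it. Fix $\gamma_0$ in any of the three resolvent sets, so the $\gamma_0$-weighted system admits the corresponding dichotomy, say with invariant projector $n\mapsto\P(n)$, constant $K\geq 1$, and parameters $(\P;\alpha,\beta,\theta,\nu)$. The key observation is the identity
\[
\Phi_\gamma(k,n)=\left(\frac{\mu(k)}{\mu(n)}\right)^{\gamma_0-\gamma}\Phi_{\gamma_0}(k,n),
\]
which follows immediately from \eqref{761}. This identity allows us to transport the dichotomy bounds at $\gamma_0$ to new bounds at $\gamma$.

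The key steps are as follows. First, multiply the bounds for $\Phi_{\gamma_0}$ by the scalar factor above to obtain, for the same projector $\P$,
\[
\|\Phi_\gamma(k,n)\P(n)\|\leq K\left(\frac{\mu(k)}{\mu(n)}\right)^{\alpha+\gamma_0-\gamma}\mu(n)^{\sgn(n)\theta},\qquad k\geq n,
\]
\[
\|\Phi_\gamma(k,n)[\Id-\P(n)]\|\leq K\left(\frac{\mu(k)}{\mu(n)}\right)^{\beta+\gamma_0-\gamma}\mu(n)^{\sgn(n)\nu},\qquad k\leq n.
\]
These are dichotomy estimates for the $\gamma$-weighted system with candidate parameters $(\P;\alpha+\gamma_0-\gamma,\beta+\gamma_0-\gamma,\theta,\nu)$ and constant $K$. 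Second, determine for which values of $\gamma$ these candidate parameters satisfy the sign conditions required by each notion of dichotomy. For $\muD$ (with $\theta=\nu=0$) we need $\alpha+\gamma_0-\gamma<0$ and $\beta+\gamma_0-\gamma>0$, giving $\gamma\in(\gamma_0+\alpha,\gamma_0+\beta)$. For $\sNmuD$ the same two conditions $\alpha+\gamma_0-\gamma<0$ and $\beta+\gamma_0-\gamma>0$ suffice, producing the same open interval. For $\NmuD$ we additionally need $\alpha+\gamma_0-\gamma+\theta<0$ and $\beta+\gamma_0-\gamma-\nu>0$, yielding $\gamma\in(\gamma_0+\alpha+\theta,\gamma_0+\beta-\nu)$; this interval is open and, crucially, contains $\gamma_0$ precisely because the original dichotomy satisfies $\alpha+\theta<0$ and $\beta-\nu>0$.

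In every one of the three cases, one exhibits an open interval around $\gamma_0$ on which the corresponding weighted system admits the relevant dichotomy, so the interval lies in the resolvent and the resolvent is open. There is no substantial obstacle here: the argument is a direct algebraic manipulation of the rates, and the sign conditions are engineered exactly to guarantee the existence of such an interval. The only point worth double-checking is that the strict inequalities in Def.~\ref{DefNmuD} (namely $\alpha+\theta<0$ and $\beta-\nu>0$) give strict—and hence open—neighborhoods, rather than merely closed ones; this is immediate from the strict signs. The projector, the constant $K$, and the nonuniformity parameters $\theta,\nu$ are preserved under the perturbation, which is why the single choice of parameters at $\gamma_0$ controls an entire neighborhood.
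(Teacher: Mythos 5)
Your proposal is correct and follows essentially the same route as the paper: shift the weight via the identity $\Phi_\gamma(k,n)=\bigl(\mu(k)/\mu(n)\bigr)^{\gamma_0-\gamma}\Phi_{\gamma_0}(k,n)$, read off the new rate parameters, and use the strict sign conditions ($\alpha<0,\ \beta>0$ for $\muD$/$\sNmuD$; $\alpha+\theta<0,\ \beta-\nu>0$ for $\NmuD$) to produce an open interval of admissible $\gamma$ around $\gamma_0$. The only cosmetic difference is that the paper restricts to a symmetric subinterval of half-width $\tfrac12\min\{-\alpha,\beta\}$ (resp. $\tfrac12\min\{-(\alpha+\theta),\beta-\nu\}$), whereas you exhibit the full interval; both are valid.
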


\begin{proof}
Consider $\gamma,\zeta\in \mathbb{R}$. From \eqref{761}, we have the identity    $\Phi_\zeta(k,n)=\Phi_\gamma(k,n)\left(\frac{\mu(k)}{\mu(n)}\right)^{\gamma-\zeta}$. If $\gamma\in \rho_\muD(A)$, the $\gamma$-weighted system has $\muD$ with some parameters $(\P;\alpha,\beta)$. Then, we have estimations
      \[\left\{ \begin{array}{lc}
            \norm{\Phi_\gamma(k,n)\P(n)}\leq K\left(\dfrac{\mu(k)}{\mu(n)}\right)^\alpha , &\forall\,\, k \geq n, \\
            \\ \norm{\Phi_\gamma(k,n)[\Id-\P(n)]}\leq K\left(\dfrac{\mu(k)}{\mu(n)}\right)^\beta , &\forall\,\, k\leq n,
             \end{array}
   \right.\]
   thus
      \[\left\{ \begin{array}{lc}
            \norm{\Phi_\zeta(k,n)\P(n)}\leq K\left(\dfrac{\mu(k)}{\mu(n)}\right)^{\alpha+\gamma-\zeta} , &\forall\,\, k \geq n, \\
            \\ \norm{\Phi_\zeta(k,n)[\Id-\P(n)]}\leq K\left(\dfrac{\mu(k)}{\mu(n)}\right)^{\beta+\gamma-\zeta} , &\forall\,\, k\leq n.
             \end{array}
   \right.\]
therefore, for any $\zeta\in (\gamma-\epsilon,\gamma+\epsilon)$, where $\epsilon=\frac{1}{2}\min\{-\alpha,\beta\}$, we have a $\muD$ for the $\zeta$-weighted system with parameters $(\P;\alpha+\epsilon,\beta-\epsilon)$, or in other words, $\zeta\in \rho_\muD(A)$. This implies that for every $\gamma \in \rho_\muD(A)$, there exists a neighborhood of $\gamma$ that is also contained in $\rho_\muD(A)$, and hence, $\rho_\muD(A)$ is an open set.

Following a similar argument, if $\gamma \in \rho_\sNmuD(A)$ and the $\gamma$-weighted system has a $\sNmuD$ with parameters $(\P; \alpha, \beta, \theta, \nu)$, we can again choose $\zeta \in (\gamma - \epsilon, \gamma + \epsilon)$, where $\epsilon = \frac{1}{2} \min\{-\alpha, \beta\}$. This leads to $\zeta \in \rho_\sNmuD(A)$. Therefore, $\rho_\sNmuD(A)$ is also an open set.

Finally, for $\gamma \in \rho_\NmuD(A)$, we know that the $\gamma$-weighted system has a $\NmuD$ with parameters $(\P; \alpha, \beta, \theta, \nu)$. Therefore, if we choose $\zeta \in (\gamma - \widetilde{\epsilon}, \gamma + \widetilde{\epsilon})$, where $\widetilde{\epsilon} = \frac{1}{2} \min\{-(\alpha + \theta), \beta - \nu\}$, we conclude that $\zeta \in \rho_\NmuD(A)$. Thus, $\rho_\NmuD(A)$ is also an open set.
\end{proof}

\begin{remark}\label{746}
{\rm     
In the previous proof, in each case, whenever we choose $\zeta$ sufficiently close to $\gamma \in \rho(A)$, the projector corresponding to the dichotomy of the $\zeta$-weighted system is the same as the projector for the $\gamma$-weighted system. For both $\muD$ and $\NmuD$, which always exhibit the $\UPP$ (as shown in Lemma \ref{705}), this implies that the only possible projector for the $\zeta$-weighted system is the projector from the $\gamma$-weighted system. This is not the case for the $\sNmuD$ scenario. Even if the $\gamma$-weighted system with $\sNmuD$ may exhibit the $\UPP$ (as in Example \ref{708}), there is no guarantee that the $\zeta$-weighted system also satisfies the $\UPP$.
}
\end{remark}

We now turn to the task of establishing a criterion for when these spectra are bounded sets. To this end, consider the following definition.

\begin{definition}\label{NmuGrowth}
		The system \eqref{700} has \textbf{nonuniform $\mu$-bounded growth} with parameter $\epsilon>0$, or just \textbf{$(\Nmu,\epsilon)$-growth}, if there are constants $\widehat{K}\geq 1$, $a\geq 0$ such that
		\[
		\|\Phi(k,n)\|\leq \widehat{K}\left(\frac{\mu(k)}{\mu(n)}\right)^{\sgn(k-n)a}\mu(n)^{\sgn(n)\epsilon},\quad\forall\,k,n\in \mathbb{Z}.
		\]
		
		Moreover, if $\epsilon=0$, it is said that the system \eqref{700} has \textbf{uniform $\mu$-bounded growth} o just \textbf{$\mu$-growth}.
	\end{definition}

 \begin{remark}
     {\rm 
     In \cite{Chu2}, similarly as in Def.~\ref{NmuGrowth}, the authors consider the concept of {\it nonuniform exponential bound} for system \eqref{700}. This is defined by the existence of constants $K>0$, $\varepsilon\geq1$ and $\widetilde{a}\geq1$ such that 
     \[
     \|\Phi(k,n)\|\leq K\widetilde{a}^{|k-n|}\varepsilon^{|n|}, \qquad k,n\in\mathbb{Z}.
     \]
     Moreover, the parameters used in \cite{Chu2} to define the nonuniform exponential bound are the same as those used to describe the nonuniform exponential dichotomy, which may lead to confusion due to the overlap in notation. We emphasize that Def.~\ref{NmuGrowth} encompasses the concept of nonuniform exponential bound by simply considering the exponential growth rate $\mu(n)=e^{n}$, for all $n\in\mathbb{Z}$, together with constants $\widetilde{a}=e^{a}$ and $\varepsilon=e^{\epsilon}$. In addition, the parameters specified in Def.~\ref{NmuGrowth} are not necessarily the same with those in Def.~\ref{DefNmuD}, helping to clearly differentiate and avoid any possible misinterpretation.  
     }
 \end{remark}

\begin{lemma}\label{714}
    Assume that system \eqref{700} has $(\Nmu,\epsilon)$-growth (resp. $\mu$-growth). Then, both $\Sigma_\NmuD(A)$ and $\Sigma_\sNmuD(A)$ (resp. $\Sigma_\muD(A)$) are bounded sets.
\end{lemma}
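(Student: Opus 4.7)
The plan is to show that once $|\gamma|$ is large enough, the $(\mu,\gamma)$-weighted system \eqref{701} actually admits a $\NmuD$, with trivial projector $\P\equiv\Id$ when $\gamma$ is very positive and $\P\equiv 0$ when $\gamma$ is very negative. Combined with \eqref{742}, this will force $\Sigma_\NmuD(A)$ and $\Sigma_\sNmuD(A)$ into a compact interval, and the uniform case will follow by tracking the parameter $\epsilon=0$.

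I would start by substituting \eqref{761} into the $(\Nmu,\epsilon)$-growth estimate of Def.~\ref{NmuGrowth} to obtain, for every $k,n\in \mathbb{Z}$,
\[
\|\Phi_\gamma(k,n)\|=\left(\frac{\mu(k)}{\mu(n)}\right)^{-\gamma}\|\Phi(k,n)\|\leq \widehat{K}\left(\frac{\mu(k)}{\mu(n)}\right)^{\sgn(k-n)a-\gamma}\mu(n)^{\sgn(n)\epsilon}.
\]
Splitting along $k\geq n$ (where the exponent is $a-\gamma$) and $k\leq n$ (where it is $-a-\gamma$), the case $k=n$ is harmless since both sides equal $1$ times the nonuniform factor. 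With $\P\equiv\Id$, the $k\geq n$ bound gives a candidate $\NmuD$ with parameters $(\Id;a-\gamma,*,\epsilon,*)$, and the required conditions $\alpha<0$ and $\alpha+\theta<0$ of Def.~\ref{DefNmuD} reduce to $\gamma>a+\epsilon$. Dually, with $\P\equiv 0$, the $k\leq n$ bound gives a candidate $\NmuD$ with parameters $(0;*,-a-\gamma,*,\epsilon)$, and the conditions $\beta>0$, $\beta-\nu>0$ reduce to $\gamma<-a-\epsilon$. Hence every $\gamma$ outside $[-a-\epsilon,a+\epsilon]$ lies in $\rho_\NmuD(A)$, which shows
\[
\Sigma_\NmuD(A)\subset[-a-\epsilon,a+\epsilon],
\]
and by the inclusion $\Sigma_\sNmuD(A)\subset\Sigma_\NmuD(A)$ in \eqref{742}, the slow nonuniform spectrum is contained in the same compact interval.

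For the uniform statement, assuming $\mu$-growth (that is, $\epsilon=0$), the factor $\mu(n)^{\sgn(n)\epsilon}$ becomes $1$, so the same two candidate dichotomies are now genuinely uniform ($\theta=\nu=0$), yielding $\Sigma_\muD(A)\subset[-a,a]$.

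No step here is really an obstacle: the computation is just a direct substitution of \eqref{761} into the growth estimate and a careful bookkeeping of the four inequalities $\alpha<0,\ \alpha+\theta<0,\ \beta>0,\ \beta-\nu>0$ from Def.~\ref{DefNmuD}. The only mildly delicate point is respecting the sign convention $\sgn(k-n)$ so that the stable bound is read off from the $k\geq n$ regime and the unstable bound from the $k\leq n$ regime; the threshold that emerges in both cases is the same value $a+\epsilon$, which is exactly the radius of the interval that traps the spectrum.
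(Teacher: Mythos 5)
Your argument is correct and is essentially the paper's own proof: weight the growth estimate by $\gamma$, read off a $\NmuD$ with projector $\Id$ for $\gamma>a+\epsilon$ and projector $0$ for $\gamma<-(a+\epsilon)$, and conclude $\Sigma_\NmuD(A)\subset[-a-\epsilon,a+\epsilon]$, with the slow and uniform cases following from the inclusion \eqref{742} and from setting $\epsilon=0$, respectively.
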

\begin{proof}
    We prove it for the $\NmuD$ case and the others follow similarly. By hypothesis, there are constants $\widehat{K}\geq 1$, $a\geq 0$ such that
		\[
		\|\Phi(k,n)\|\leq \widehat{K}\left(\frac{\mu(k)}{\mu(n)}\right)^{\sgn(k-n)a}\mu(n)^{\sgn(n)\epsilon},\quad\forall\,k,n\in \mathbb{Z}.
		\]
        
Therefore, for every $\gamma>a+\epsilon$ and $k\geq n$ we have
  		\[
		\|\Phi_\gamma(k,n)\|\leq \widehat{K}\left(\frac{\mu(k)}{\mu(n)}\right)^{a-\gamma}\mu(n)^{\sgn(n)\epsilon},
		\]
  which corresponds to a $\NmuD$ with parameters $(\Id;a-\gamma,*,\epsilon,*)$. Thus, we obtain $(a+\epsilon,+\infty)\subset\rho_\NmuD(A)$. Similarly, for every $\gamma<-(a+\epsilon)$ and $k\leq n$ we have
    		\[
		\|\Phi_\gamma(k,n)\|\leq \widehat{K}\left(\frac{\mu(k)}{\mu(n)}\right)^{-\gamma-a}\mu(n)^{\sgn(n)\epsilon},
		\]
  which corresponds to a $\NmuD$ with parameters $(0;*,-\gamma-a,*,\epsilon)$. Thus, we obtain $(-\infty,-a-\epsilon)\subset\rho_\NmuD(A)$. Therefore we conclude $\Sigma_\NmuD(A)\subset [-a-\epsilon,a+\epsilon]$.
\end{proof}

For $\zeta,\gamma\in \mathbb{R}$, we can consider both the $\zeta$ and the $\gamma$-weighted systems, and study their stable and unstable manifolds, which we denote by $\mathcal{S}_\zeta$, $\mathcal{U}_\zeta$, $\mathcal{S}_\gamma$ and $\mathcal{U}_\gamma$, respectively. It is a straightforward computation to verify that if $\zeta>\gamma$, then
\begin{equation*}
    \mathcal{S}_\gamma\subset \mathcal{S}_\zeta\qquad \text{and}\qquad \mathcal{U}_\zeta\subset\mathcal{U}_\gamma.
\end{equation*}

In the following result we use the fact that for $\muD$ and $\NmuD$, the stable and unstable manifolds can be identified with the image and kernel of the invariant projector, as stated on Lemma \ref{705}. For this reason, we only stablish this result for $\rho_\muD$ and $\rho_\NmuD$. The case of $\rho_\sNmuD$ will be treated separately at the end of the section.

\begin{lemma}\label{753}
    Consider the system \eqref{700} and $\rho$ being either $\rho_\muD(A)$ or $\rho_\NmuD(A)$. If $J\subset \rho$ is an interval containing $\gamma$ and $\zeta$, then
    \begin{equation*}
    \mathcal{S}_\gamma= \mathcal{S}_\zeta\qquad \text{and}\qquad \mathcal{U}_\zeta=\mathcal{U}_\gamma.
\end{equation*}
\end{lemma}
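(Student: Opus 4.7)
The plan is to reduce both claimed equalities to the single statement that the projector associated with the dichotomy is constant as the weight parameter varies inside $J$. Without loss of generality I assume $\gamma\le \zeta$, so that the inclusions $\mathcal{S}_\gamma\subset \mathcal{S}_\zeta$ and $\mathcal{U}_\zeta\subset \mathcal{U}_\gamma$ come for free from the monotonicity remark recorded immediately before the statement. Hence the task is to establish the reverse inclusions.

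For each $\eta\in\rho$, the $\eta$-weighted system admits the chosen type of dichotomy ($\muD$ or $\NmuD$), and so by Lem.~\ref{705} it has the $\UPP$. I therefore have a well-defined map $\eta\mapsto\P_\eta$ on $\rho$, where $\P_\eta$ is the unique invariant projector implementing the dichotomy for the $\eta$-weighted system, and the same lemma gives the identifications $\mathcal{S}_\eta=\im\,\P_\eta$ and $\mathcal{U}_\eta=\ker\,\P_\eta$. It is therefore sufficient to show $\P_\gamma=\P_\zeta$.

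To this end I exploit the sharper content of the proof of Lem.~\ref{706} highlighted in Rem.~\ref{746}: the argument there does not merely show openness of $\rho$, but actually produces a neighborhood of any $\eta\in\rho$ on which the very same projector $\P_\eta$ simultaneously implements the dichotomy for every nearby weight $\tau$. Invoking the $\UPP$ of the $\tau$-weighted system (again via Lem.~\ref{705}) then forces $\P_\tau=\P_\eta$ throughout that neighborhood. Consequently, $\eta\mapsto\P_\eta$ is locally constant on $\rho$, and since $J\subset\rho$ is an interval, hence connected, it is constant on $J$. In particular $\P_\gamma=\P_\zeta$, and denoting this common projector by $\P$ one obtains
\[\mathcal{S}_\gamma=\im\,\P=\mathcal{S}_\zeta\qquad\text{and}\qquad \mathcal{U}_\gamma=\ker\,\P=\mathcal{U}_\zeta,\]
as desired.

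I do not foresee a genuine obstacle, as the difficult content has already been absorbed into Lem.~\ref{705} (uniqueness of the projector and identification with stable/unstable manifolds) and Lem.~\ref{706} together with Rem.~\ref{746} (the projector is preserved under small perturbations of the weight). The only subtle point requiring care is that the hypothesis ``$\gamma$ and $\zeta$ lie in the same interval $J\subset\rho$'' is essential: connectedness of $J$ is precisely what upgrades a locally constant selection of projectors to a globally constant one, which is exactly the reason the analogous statement cannot be expected to hold if $\gamma$ and $\zeta$ sit in different connected components of $\rho$, and also the reason the argument is carried out only for $\rho_{\muD}$ and $\rho_{\NmuD}$, where the $\UPP$ is automatic.
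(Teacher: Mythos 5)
Your proposal is correct and follows essentially the same route as the paper: both arguments combine the local constancy of the projector from the proof of Lem.~\ref{706} and Rem.~\ref{746} with the identification $\mathcal{S}_\eta=\im\,\P_\eta$, $\mathcal{U}_\eta=\ker\,\P_\eta$ from Lem.~\ref{705}, and then use connectedness of $J$ to pass from locally constant to constant. Your write-up merely makes explicit the uniqueness step (via the $\UPP$) that the paper leaves implicit.
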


\begin{proof}
    This is a consequence of the proof of Lemma \ref{706} and Remark \ref{746}. Explicitly, for $\muD$ and $\NmuD$ the projector is uniquely defined by the invariant manifolds and vice versa. As seen in the proof of Lemma \ref{706}, the projectors are constant on small enough neighborhoods on the resolvent set, therefore the stable and unstable invariant manifolds must be the same for all the elements lying on connected components.
\end{proof}

In the following we characterize the intersection between stable and unstable manifolds for different weighted systems.

\begin{lemma}\label{754}
Consider the system \eqref{700} and $\rho$ being either $\rho_\muD(A)$ or $\rho_\NmuD(A)$.  Consider $\zeta,\gamma\in \rho$ with $\zeta>\gamma$. Then, the following conditions are equivalent:
    \begin{itemize}
        \item [(i)] $\mathcal{S}_\zeta\cap \mathcal{U}_\gamma\neq \mathbb{R}\times \{0\}$,
        \item [(ii)] $[\gamma,\zeta]\cap \Sigma\neq \emptyset$, where $\Sigma$ is either $\Sigma_\muD(A)$ or $\Sigma_\NmuD(A)$, correspondingly
        \item [(iii)] $\rank\mathcal{S}_\gamma<\rank \mathcal{S}_\zeta$,
        \item [(iv)] $\rank \mathcal{U}_\gamma>\rank \mathcal{U}_\zeta$.
    \end{itemize}
\end{lemma}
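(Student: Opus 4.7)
The plan is to prove the four-way equivalence by reducing all implications to one central algebraic fact about the splitting of stable and unstable manifolds. Throughout, the essential structural tool is Lemma~\ref{705}: for $\muD$ and $\NmuD$, the stable and unstable manifolds coincide with the image and kernel of the invariant projector, hence $\mathcal{S}_\gamma \oplus \mathcal{U}_\gamma = \mathbb{Z}\times\mathbb{R}^d$ whenever $\gamma \in \rho$. In particular $\rank \mathcal{S}_\gamma + \rank \mathcal{U}_\gamma = d$ and the same for $\zeta$, which immediately yields the equivalence (iii)$\Leftrightarrow$(iv).

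The key step is (i)$\Leftrightarrow$(iii). Since $\gamma < \zeta$, I would first recall the monotonicity $\mathcal{S}_\gamma \subset \mathcal{S}_\zeta$ and $\mathcal{U}_\zeta \subset \mathcal{U}_\gamma$. Using the Whitney decomposition for the $\gamma$-weighted system, any $(n,\xi) \in \mathcal{S}_\zeta$ decomposes as $\xi = \xi_s + \xi_u$ with $(n,\xi_s) \in \mathcal{S}_\gamma \subset \mathcal{S}_\zeta$ and $(n,\xi_u) \in \mathcal{U}_\gamma$; thus $(n,\xi_u) = (n,\xi) - (n,\xi_s) \in \mathcal{S}_\zeta \cap \mathcal{U}_\gamma$. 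This gives fiberwise the decomposition
\begin{equation*}
    \mathcal{S}_\zeta \;=\; \mathcal{S}_\gamma \oplus (\mathcal{S}_\zeta \cap \mathcal{U}_\gamma),
\end{equation*}
so $\rank \mathcal{S}_\zeta = \rank \mathcal{S}_\gamma + \rank(\mathcal{S}_\zeta \cap \mathcal{U}_\gamma)$. Hence (i) holds iff the last summand has positive rank iff (iii) holds.

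For (iii)$\Rightarrow$(ii) I would argue by contrapositive: if $[\gamma,\zeta] \cap \Sigma = \emptyset$, then $[\gamma,\zeta] \subset \rho$ is a connected subset of the resolvent, and Lemma~\ref{753} gives $\mathcal{S}_\gamma = \mathcal{S}_\zeta$, so their ranks coincide and (iii) fails. For (ii)$\Rightarrow$(iii), again by contrapositive: assume $\rank \mathcal{S}_\gamma = \rank \mathcal{S}_\zeta$. Combined with $\mathcal{S}_\gamma \subset \mathcal{S}_\zeta$ (and similarly for the unstable manifolds), this forces $\mathcal{S}_\gamma = \mathcal{S}_\zeta$ and $\mathcal{U}_\gamma = \mathcal{U}_\zeta$, so the invariant projectors $\mathrm{P}$ of the $\gamma$- and $\zeta$-weighted systems coincide by Lemma~\ref{705}. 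For any $\lambda \in [\gamma,\zeta]$, using $\Phi_\lambda(k,n) = (\mu(k)/\mu(n))^{\gamma-\lambda}\Phi_\gamma(k,n) = (\mu(k)/\mu(n))^{\zeta-\lambda}\Phi_\zeta(k,n)$, I would estimate the stable part of the $\lambda$-weighted system from the $\gamma$-dichotomy (producing exponent $\alpha_\gamma + \gamma - \lambda \leq \alpha_\gamma < 0$ on the $\mu$-factor, with the nonuniform correction unchanged) and the unstable part from the $\zeta$-dichotomy (producing exponent $\beta_\zeta + \zeta - \lambda \geq \beta_\zeta > 0$). In the $\NmuD$ case, the crucial inequalities $\alpha+\theta<0$ and $\beta-\nu>0$ of the $\gamma$- and $\zeta$-dichotomies transfer because $\gamma-\lambda \leq 0$ and $\zeta-\lambda \geq 0$ only strengthen them. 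Therefore the $\lambda$-weighted system admits dichotomy with projector $\mathrm{P}$, so $[\gamma,\zeta] \subset \rho$ and (ii) fails.

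The main obstacle is the (ii)$\Rightarrow$(iii) direction, specifically the uniform construction of a dichotomy for every intermediate $\lambda$: one must verify carefully that the sign conditions defining $\NmuD$ (namely $\alpha+\theta<0$ and $\beta-\nu>0$) survive the shift by $\gamma-\lambda$ or $\zeta-\lambda$, which is precisely why the construction takes the stable estimate from the lower endpoint and the unstable estimate from the upper endpoint, and why the identification $\mathrm{P}_\gamma = \mathrm{P}_\zeta$ coming from $\mathcal{S}_\gamma = \mathcal{S}_\zeta$ and $\mathcal{U}_\gamma = \mathcal{U}_\zeta$ is indispensable.
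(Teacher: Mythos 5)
Your proof is correct and follows exactly the standard argument that the paper itself omits and delegates to \cite[Lemma 7]{Silva} and \cite{Chu2}: the Whitney decomposition $\mathcal{S}_\zeta=\mathcal{S}_\gamma\oplus(\mathcal{S}_\zeta\cap\mathcal{U}_\gamma)$ for (i)$\Leftrightarrow$(iii), Lemma \ref{753} for (iii)$\Rightarrow$(ii), and the splicing of the stable estimate from the lower endpoint with the unstable estimate from the upper endpoint for (ii)$\Rightarrow$(iii). The care you take in checking that $\alpha+\theta<0$ and $\beta-\nu>0$ survive the shifts by $\gamma-\lambda$ and $\zeta-\lambda$ is precisely the point that makes the argument go through in the $\NmuD$ case.
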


The proof of the previous lemma follows the same ideas as the result \cite[Lemma 7]{Silva} and it is relatively standard in papers about dichotomy spectrum (see \cite{Chu2} for the exponential case, and \cite{Chu,Xiang} for the continuous time framework), thus we omit it and leave it as an exercise for the reader.

What we must understand from this result is that is establishes that every spectral gap, has a unique invariant projector associated to it, and no two different spectral gaps share the same projector.

\begin{remark}
    {\rm 
   The following is an immediate consequence of this result and the proof of Lemma \ref{714}: If the system \eqref{700} admits $(\Nmu,\epsilon)$-growth (resp. $\mu$-growth), then $\Sigma_\NmuD(A)$ (resp. $\Sigma_\muD(A)$) is a nonempty set. This assertion is supported by the fact that in the aforementioned proof we identified two different projectors for which the different weighted system have dichotomy, thus they must belong to different connected components, which implies that there is some element of the spectrum that separates them.
    }
\end{remark}

Employing lemmas \ref{706}, \ref{714}, \ref{753} and \ref{754}, and following once again a standard argument (see \cite{Chu2} for the exponential case, and \cite{Chu,Xiang} for the continuous time framework, specially \cite{Silva} since there the arguments are given for growth rates), we can prove the following results, known as the spectral theorems. For this reason, we omit the proof it and leave it as an exercise for the reader.

\begin{theorem}\label{770}
    (Spectral Theorem for ${\NmuD}$) Consider the nonautonomous difference equation \eqref{700}. There exists some $m\in \{0,1,\dots,d\}$ such that
		\[
		\Si_\NmuD(A)=\bigcup_{i=1}^m[a_i,b_i],
		\]
		for some $a_i,b_i\in \mathbb{R}\cup \{\pm \infty\}$ verifying $a_i\leq b_i<a_{i+1}$. The following are also verified:
\begin{itemize}
    \item [(i)] For each $\gamma\in \rho_{\NmuD}(A)$, the $\gamma$-weighted system has $\NmuD$ with a unique invariant projector $n\mapsto \P(n)$. 
    \item [ii)] For $\zeta,\gamma\in \rho_{\NmuD}(A)$, the projectors of their respective weighted systems are the same if and only if they lie in the same spectral gap.  
    \item [(iii)] If $\zeta,\gamma\in \rho_\NmuD(A)$ lie on different spectral gaps and $\zeta>\gamma$, then $\mathcal{U}_\zeta\subsetneq \mathcal{U}_\gamma$ and $\mathcal{S}_\gamma\subsetneq \mathcal{S}_\zeta$.
    \item [(iv)] If the system \eqref{700} has $(\Nmu,\epsilon)$-growth, the spectrum is nonempty and bounded, that is $m\neq 0$ and $a_1\neq-\infty$ and $b_m\neq +\infty$. 
\end{itemize}
\end{theorem}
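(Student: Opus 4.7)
The plan is to derive the theorem from Lemmas~\ref{705}, \ref{706}, \ref{714}, \ref{753}, and \ref{754}, via a rank-based counting argument together with case-by-case verifications for the four itemized properties. The skeleton mirrors Siegmund's original template, but exploits the fact (crucial in the nonuniform setting) that Lem.~\ref{705} still provides the UPP for $\NmuD$.

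Properties (i) and (iii) reduce almost immediately to earlier results. For (i), since the $\gamma$-weighted system admits $\NmuD$, Lem.~\ref{705} grants it the $\UPP$, so its invariant projector is unique. Assertion (iii) is the implication (ii)$\Rightarrow$(iii),(iv) of Lem.~\ref{754}: if $\zeta>\gamma$ lie in distinct spectral gaps, then $[\gamma,\zeta]\cap \Si_\NmuD(A)\neq\emptyset$, so $\rank\mathcal{S}_\gamma<\rank\mathcal{S}_\zeta$ and $\rank\mathcal{U}_\gamma>\rank\mathcal{U}_\zeta$, which combined with the general monotonic inclusions $\mathcal{S}_\gamma\subset\mathcal{S}_\zeta$ and $\mathcal{U}_\zeta\subset\mathcal{U}_\gamma$ gives the claimed strict containments. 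For (ii), if $\gamma,\zeta$ belong to the same connected component of $\rho_\NmuD(A)$, then Lem.~\ref{753} yields $\mathcal{S}_\gamma=\mathcal{S}_\zeta$ and $\mathcal{U}_\gamma=\mathcal{U}_\zeta$; since Lem.~\ref{705} identifies $\im\,\P=\mathcal{S}$ and $\ker\P=\mathcal{U}$, the associated projectors necessarily coincide. Conversely, if $\gamma$ and $\zeta$ sit in distinct gaps, (iii) forces the stable manifolds to differ, and hence so do the projectors.

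For the structural statement, I would define $r:\rho_\NmuD(A)\to\{0,1,\dots,d\}$ by $r(\gamma):=\rank\mathcal{S}_\gamma$. By (ii), $r$ is constant on every connected component of $\rho_\NmuD(A)$, while by (iii) it strictly increases each time we move into a component lying to the right. Since $r$ can take at most $d+1$ distinct values, $\rho_\NmuD(A)$ has at most $d+1$ connected components, and Lem.~\ref{706} ensures its complement is closed; consequently $\Si_\NmuD(A)=\bigcup_{i=1}^m [a_i,b_i]$ with $m\leq d$ and $a_i\leq b_i<a_{i+1}$. Finally, (iv) combines Lem.~\ref{714}, which confines the spectrum to $[-a-\epsilon,a+\epsilon]$ (ruling out $a_1=-\infty$ and $b_m=+\infty$), with an inspection of its proof: for $\gamma>a+\epsilon$ the weighted system admits $\NmuD$ with projector $\Id$ (so $r=d$), and for $\gamma<-(a+\epsilon)$ with projector $0$ (so $r=0$), so these two regions lie in distinct components and thus $m\geq 1$. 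I anticipate no genuine obstacle; the only step requiring care is verifying the strict rank jump at each spectral component, which is exactly what Lem.~\ref{754} provides, and this is why the authors are comfortable relegating the full proof to the reader.
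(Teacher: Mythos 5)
Your proposal is correct and follows exactly the route the paper intends: the authors omit the proof but explicitly point to Lemmas~\ref{706}, \ref{714}, \ref{753}, and \ref{754} (plus the $\UPP$ from Lemma~\ref{705}) and the standard Siegmund-type rank-counting argument, which is precisely what you carry out. No gaps.
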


An analogous theorem is established for the $\muD$ case.

\subsection{The Slow Nonuniform Case}

To conclude this section, we introduce a novel notion of the spectrum. As we verified in the previous subsection, the $\sNmuD$ behavior does not, in general, provide the appropriate splitting between stable and unstable manifolds. On the other hand, the $\NmuD$ behavior does indeed ensure this splitting, but it is strictly more stringent. In terms of the involved spectra, this means that, in general, the $\NmuD$ spectrum is larger than the $\sNmuD$ spectrum. We emphasize that the goal is typically to have a spectrum that is as small as possible, as this facilitates the achievement of certain conditions, such as spectral nonresonance (see \cite{Song} for the discrete case, and \cite{CJ} for the continuous case), which is important for linearization purposes. In an effort to find a middle ground, we present the following concept.

\begin{definition}\label{771}
    We define the set 
    \[\rho_\sNmuD^\UPP(A):=\{\gamma\in \mathbb{R}: \eqref{701} \text{ admits }\sNmuD\text{ having the }\UPP\}\]
    and call it the \textbf{unique projector slow nonuniform $\mu$-resolvent set}. Its compliment $\Sigma_\sNmuD^\UPP(A):=\mathbb{R}\setminus\rho_\sNmuD^\UPP(A)$ is called the \textbf{unique projector slow nonuniform $\mu$-dichotomy spectrum}. 
\end{definition}

From Lemma \ref{705}, we immediately obtain the following chain of inclusions, updating the forms presented in \eqref{741} and \eqref{742}.
\[\rho_\muD(A)\subset \rho_\NmuD(A)\subset \rho_\sNmuD^{\UPP}(A)\subset \rho_\sNmuD(A),\]
\begin{equation}\label{767}
    \Sigma_\sNmuD(A)\subset \Sigma_\sNmuD^\UPP(A)\subset \Sigma_\NmuD(A)\subset \Sigma_\muD(A).
\end{equation}

An immediate consequence from this observation and Lemma \ref{714} is that if a system has nonuniform bounded growth, then $\Sigma_\sNmuD^\UPP(A)$ is a bounded set.

    We also call each connected component of $\rho_\sNmuD^\UPP$ a spectral gap. Note that each spectral gap of $\rho_\muD$ is contained in a unique spectral gap of $\rho_\NmuD$, and each spectral gap of $\rho_\NmuD$ is contained in a unique spectral gap of $\rho_\sNmuD^\UPP$. Nevertheless, it can happen that $\rho_\sNmuD^\UPP$ has more spectral gaps than $\rho_\NmuD$, which can have more spectral gaps than $\rho_\muD$.

\begin{example}\label{799}
{\rm
    Considering the quadratic exponential growth rate and the system on Example \ref{707}. From \eqref{763}, we have
    \begin{equation}\label{765}
            \Phi_\gamma(k,n)\leq \left(\frac{\qeg(k)}{\qeg(n)}\right)^{-(1+\gamma)}\qeg(n)^{\sgn(n)2},\qquad \forall\,k\geq n,
    \end{equation}
which implies that the $\gamma$-weighted system has \textrm{sNqD} with parameters $(\Id;-(1+\gamma),*,2,*)$ for all $\gamma>-1$. Therefore, we have $(-1,+\infty)\subset \rho_{\textrm{sNqD}}$. Similarly, from \eqref{764}  we obtain 
\begin{equation}\label{766}
    \Phi_\gamma(k,n)\leq \left(\frac{\qeg(k)}{\qeg(n)}\right)^{1-\gamma}\qeg(n)^{\sgn(n)2},\qquad \forall\,k\leq n,
\end{equation}
hence $\gamma$-weighted system has \textrm{sNqD} with parameters $(0;*,1-\gamma,*,2)$ for all $\gamma<1$. Thus, we have $(-\infty,1)\subset \rho_{\textrm{sNqD}}$. In conclusion, $\Sigma_{\textrm{sNqD}}=\emptyset$.

Note that for $\gamma\in (-1,1)$, the $\gamma$-weighted system has \textrm{sNqD} with both projectors $\Id$ and $0$. Therefore, for $\gamma\in(-1,1)$, the $\gamma$-weighted system does not have the $\UPP$. In other words, $(-1,1)\subset \Sigma_{\textrm{sNqD}}^\UPP$.

Once again from \eqref{765}, we deduce $(1,\infty)\subset \rho_{\textrm{NqD}}$, while from \eqref{766} we obtain $(-\infty,-1)\subset \rho_{\textrm{NqD}}$. Thus $\Sigma_{{\textrm{NqD}}}\subset [-1,1]$. Following these arguments and the inclusions described on \eqref{767} we have
\[
(-1,1)\subset\Sigma_{\textrm{sNqD}}^\UPP\subset \Si_{{\textrm{NqD}}}\subset[-1,1],
\]
and as $\Sigma_{{\textrm{NqD}}}$ is closed by Lemma \ref{706}, we have $\Sigma_{{\textrm{NqD}}}=[-1,1]$. 

We already know that the $(-1)$-weighted system has \textrm{sNqD}. It remains to be seen that the $(-1)$-weighted system has the $\UPP$. As the system is one dimensional,  it is enough to prove that it does not have \textrm{sNqD} with the only other available projector, which is the $\Id$ projector. From \eqref{768} we know
\[
\Phi_{-1}(k,n)=e^{-(k^2-n^2)}\left(\frac{\qeg(k)}{\qeg(n)}\right)^{-(-1)}=1,\qquad\forall \,k\geq n\geq 0,
\]
hence $\lim_{k\to \infty}\Phi_{-1}(k,n)\neq 0$. This implies that the $(-1)$-weighted system does not have dichotomy with the projector $\Id$, thus its dichotomy with projector $0$ does indeed present the $\UPP$. In other words, we have $-1\in\rho_{\textrm{sNqD}}^\UPP$. Similarly we prove $1\in \rho_{\textrm{sNqD}}^\UPP$.

In conclusion,
    \[\Sigma_\textrm{sNqD}=\emptyset\subset\Sigma_\textrm{sNqD}^\UPP= (-1,1)\subset\Sigma_{\textrm{NqD}}=[-1,1]\subset \Sigma_\textrm{qD}=\mathbb{R}.\]
}
\end{example}

\begin{example}\label{798}
 {\rm      Consider now the exponential growth rate and the system on Example \ref{708}. We have that $0$ does not belong to the $\Sigma_{\sNED}$ or $\Sigma_{\sNED}^\UPP$ spectra, since this system has a $\sNED$ which does exhibit $\UPP$, but $0$ does belong to its $\Sigma_\NED$ spectrum, since this $\sNED$ is not a $\NED$. Explicitly, for this example we have
    \[\Sigma_\sNED=\Sigma_\sNED^\UPP=[-\omega-a,-\omega+a]\subset \Sigma_{\NED}=[-\omega-3a,-\omega+3a]\subset \Sigma_\ED=\mathbb{R}.\]

    It it worth noting that the spectra for this example were carefully computed on \cite{GJ} on the continuous case and we compute it explicitly on Example \ref{773}.
    }
\end{example}

\begin{remark}
    {\rm 
    As seen by the previous examples, all four spectra can be different.
    }
\end{remark}

Now we delve into the task of finding properties for this spectrum.

\begin{definition}
    Consider the system \eqref{700}. The \textbf{dimension map} $\dim\colon\rho_\sNmuD^\UPP(A)\to \mathbb{Z}$ is defined by $\gamma\mapsto \rank\P$,  where $n\mapsto\P(n)$ is the invariant projector for the $\sNmuD$ of the $\gamma$-weighted system.
\end{definition}

Note that this map is well defined, since by definition, for $\gamma\in \rho_\sNmuD^\UPP(A)$ there is one and only one projector with which the respective $\gamma$-weighted system admits $\sNmuD$. However, note that this map cannot be extended to $\rho_\sNmuD(A)$, since there are systems having multiple $\sNmuD$, with projectors which have different rank, as stated in Examples \ref{707}  and \ref{718}.

\begin{lemma}
 Consider the system \eqref{700}. The dimension map is locally constant on $\rho_\sNmuD^\UPP(A)$. Thus, it is constant on spectral gaps.
\end{lemma}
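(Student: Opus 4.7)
The plan is to adapt the perturbation argument that was used in the proof of Lemma~\ref{706} to show continuity of the projector under small weight changes, and then invoke the $\UPP$ at the perturbed parameter to force the projectors to coincide. Since the dimension map takes values in $\mathbb{Z}$ and is locally constant, it must be constant on connected components of $\rho_\sNmuD^{\UPP}(A)$, i.e., on spectral gaps.

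More concretely, fix $\gamma\in \rho_\sNmuD^\UPP(A)$ and let $n\mapsto\P(n)$ be the unique invariant projector for which the $\gamma$-weighted system admits $\sNmuD$, say with parameters $(\P;\alpha,\beta,\theta,\nu)$ and constant $K$. Using the identity $\Phi_\zeta(k,n)=\Phi_\gamma(k,n)\bigl(\mu(k)/\mu(n)\bigr)^{\gamma-\zeta}$ exactly as in Lem.~\ref{706}, for every $\zeta$ with $|\zeta-\gamma|<\epsilon:=\frac12\min\{-\alpha,\beta\}$ the $\zeta$-weighted system admits $\sNmuD$ with the \emph{same} invariant projector $\P$, only with shifted convergence exponents $\alpha+(\gamma-\zeta)$ and $\beta+(\gamma-\zeta)$, which still satisfy the sign requirements of Def.~\ref{DefNmuD} (the parameters $\theta,\nu$ and the constant $K$ are unchanged).

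Now comes the key step: suppose in addition that $\zeta\in(\gamma-\epsilon,\gamma+\epsilon)\cap \rho_\sNmuD^\UPP(A)$. By definition of $\rho_\sNmuD^\UPP(A)$, the $\zeta$-weighted system has $\UPP$, so it admits $\sNmuD$ with a \emph{unique} invariant projector $\widetilde{\P}$. But the preceding paragraph produces $\P$ as a valid projector for the $\zeta$-weighted $\sNmuD$; the $\UPP$ at $\zeta$ therefore forces $\widetilde{\P}(n)=\P(n)$ for all $n\in\mathbb{Z}$. In particular $\dim(\zeta)=\rank\widetilde{\P}=\rank\P=\dim(\gamma)$, so the dimension map is constant on $(\gamma-\epsilon,\gamma+\epsilon)\cap \rho_\sNmuD^\UPP(A)$, proving local constancy.

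For the second assertion, recall that a spectral gap is, by definition, a connected component of $\rho_\sNmuD^\UPP(A)$. Any integer-valued locally constant function on a connected topological space is constant, hence the dimension map is constant on each spectral gap. I do not anticipate a genuine obstacle here; the only conceptually delicate point is the observation flagged in Rem.~\ref{746} that the perturbation argument alone does not immediately rule out \emph{other} projectors appearing at $\zeta$, and this is precisely what the $\UPP$ hypothesis at $\zeta$ (built into the definition of $\rho_\sNmuD^\UPP(A)$) is designed to handle.
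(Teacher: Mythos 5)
Your proposal is correct and follows essentially the same route as the paper: perturb the weight as in Lemma~\ref{706} to carry the projector $\P$ from $\gamma$ to nearby $\zeta$, then invoke the $\UPP$ at $\zeta$ (built into the definition of $\rho_\sNmuD^\UPP(A)$) to conclude the projector, and hence its rank, is unchanged on a neighborhood. The concluding step (an integer-valued locally constant map is constant on connected components) matches the paper as well.
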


\begin{proof}
    Choose $\gamma\in \rho_\sNmuD^\UPP(A)$ and fix that the unique invariant projector associated to the $\gamma$-weighted system is $n\mapsto\P(n)$. From Lemma \ref{706}, there exists $\epsilon>0$ such that for every $\zeta\in (\gamma-\epsilon,\gamma+\epsilon)$, the $\zeta$-weighted system admits $\sNmuD$ with projector $n\mapsto\P(n)$. Now, $(\gamma-\epsilon,\gamma+\epsilon)\cap\rho_\sNmuD^\UPP(A)$ is a neighborhood of $\gamma$ in $\rho_\sNmuD^\UPP(A)$, and for every $\zeta$ in this intersection the projector is unique and is $n\mapsto\P(n)$.
\end{proof}

In other words, we have seen that every spectral gap in $\rho_\sNmuD^\UPP(A)$ has a unique associated invariant projector. In the following we will study the reciprocal, that is, if two elements of the resolvent have the same dimension,   then they are in the same spectral gap.

\begin{remark}
    {\rm
The dimension map can be restricted to both $\rho_{\NmuD}(A)$ and $\rho_\muD(A)$, since these are contained in $\rho_\sNmuD^\UPP(A)$. Moreover, each spectral gap of either $\rho_{\NmuD}(A)$ or $\rho_\muD(A)$ is contained in a unique spectral gap of $\rho_\sNmuD^\UPP(A)$. Nevertheless, it can happen that $\rho_\sNmuD^\UPP(A)$ has more spectral gaps that the other two resolvent sets.
    }
\end{remark}

\subsection{Consequences of USPP Conjecture}
In the remainder of this section, we assume the validity of the USPP conjecture (Conjecture \ref{750}).

\begin{lemma}\label{713}
Assume $\USPP$ conjecture holds. Consider $\zeta,\gamma\in \rho_\sNmuD^\UPP(A)$ such that $\dim(\gamma)=\dim(\zeta)$. Then, the projector is the same for the $\gamma$ and $\zeta$-weighted systems.
\end{lemma}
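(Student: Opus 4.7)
The plan is to leverage the USPP conjecture to translate the projectors of the weighted systems into their stable and unstable manifolds, and then combine this with the monotonicity property of these manifolds with respect to the weight parameter. First I would denote by $n\mapsto \P_\gamma(n)$ and $n\mapsto\P_\zeta(n)$ the unique invariant projectors for the $\gamma$- and $\zeta$-weighted systems, respectively, and write $\mathcal{S}_\gamma,\mathcal{U}_\gamma,\mathcal{S}_\zeta,\mathcal{U}_\zeta$ for the corresponding stable and unstable manifolds. Without loss of generality assume $\gamma\leq \zeta$; the case $\gamma=\zeta$ is immediate, so the substantive case is $\gamma<\zeta$.

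Next I would apply the USPP conjecture to both weighted systems. Since each admits $\sNmuD$ and satisfies the $\UPP$, the conjecture yields the identifications
\[
\im\,\P_\gamma=\mathcal{S}_\gamma,\quad \ker\,\P_\gamma=\mathcal{U}_\gamma,\quad \im\,\P_\zeta=\mathcal{S}_\zeta,\quad \ker\,\P_\zeta=\mathcal{U}_\zeta.
\]
At this stage I would invoke the monotonicity of stable/unstable manifolds observed prior to Lem.~\ref{753}: since $\gamma<\zeta$, one has $\mathcal{S}_\gamma\subset \mathcal{S}_\zeta$ and $\mathcal{U}_\zeta\subset \mathcal{U}_\gamma$, where the inclusions are understood fiberwise.

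Then the hypothesis $\dim(\gamma)=\dim(\zeta)$ translates into $\rank\P_\gamma(n)=\rank\P_\zeta(n)$ for every $n\in\mathbb{Z}$, which via the identifications above gives $\dim \mathcal{S}_\gamma(n)=\dim \mathcal{S}_\zeta(n)$ and $\dim \mathcal{U}_\gamma(n)=\dim \mathcal{U}_\zeta(n)$. Combining the inclusions with the equality of fiber dimensions yields $\mathcal{S}_\gamma=\mathcal{S}_\zeta$ and $\mathcal{U}_\gamma=\mathcal{U}_\zeta$.

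Finally I would close the argument by observing that a projector on $\mathbb{R}^d$ is uniquely determined by its image and kernel: since $\im\,\P_\gamma(n)=\mathcal{S}_\gamma(n)=\mathcal{S}_\zeta(n)=\im\,\P_\zeta(n)$ and $\ker\,\P_\gamma(n)=\mathcal{U}_\gamma(n)=\mathcal{U}_\zeta(n)=\ker\,\P_\zeta(n)$ for every $n\in\mathbb{Z}$, we conclude $\P_\gamma(n)=\P_\zeta(n)$ for all $n$, as required. The main conceptual step — and the only nonroutine one — is the invocation of the USPP conjecture to obtain the manifold characterization of the projectors; once that is available, the argument is structural and parallel to the proofs of Lem.~\ref{753} and \ref{754} for the $\NmuD$ case.
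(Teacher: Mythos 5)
Your proof is correct and follows essentially the same route as the paper's: the paper likewise reduces to $\zeta>\gamma$, uses the estimate $\|\Phi_\zeta(k,n)\P(n)\|\leq\|\Phi_\gamma(k,n)\P(n)\|$ (i.e.\ the inclusion $\mathcal{S}_\gamma\subset\mathcal{S}_\zeta$ restricted to $\im\,\P$), invokes the USPP conjecture to identify $\mathcal{S}_\zeta$ with the image of the $\zeta$-projector, and concludes $\im\,\P_\gamma=\im\,\P_\zeta$ from the equality of ranks, with the kernel/unstable part handled symmetrically. Your only cosmetic difference is applying the USPP identification to both weighted systems up front rather than once per inclusion.
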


\begin{proof}
    Assume without loss of generality $\zeta>\gamma$. Consider that the projector for the $\gamma$-weighted system is $n\mapsto \P(n)$ and the projector for the $\zeta$-weighted system is $n\mapsto \Q(n)$. Note that for $k\geq n$ we have
    \[\|\Phi_\zeta(k,n)\P(n)\|\leq \|\Phi_\gamma(k,n)\P(n)\|,\]
    thus, all the solutions of the $\zeta$-weighted system which lie in $\im \,\P$ are bounded on $\mathbb{Z}^+$. But as the projector of the $\zeta$-weighted system is $\Q$, and $\USPP$ conjecture holds, we know $\mathcal{S}_\zeta=\im \,\Q$. Therefore, $\im \,\P\subset \im\, \Q$, and as they have the same dimension, then $\im\, \P=\im\,\Q$. Analogously we prove $\ker \P=\ker\Q$, thus they must be the same projector. 
\end{proof}

\begin{lemma}\label{711}
Assume $\USPP$ conjecture holds. Consider $\zeta,\gamma\in \rho_\sNmuD^\UPP(A)$ such that the projector is the same for the $\gamma$ and $\zeta$-weighted systems. Then, they belong to the same spectral gap $\rho_\sNmuD^\UPP(A)$. 
\end{lemma}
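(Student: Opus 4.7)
The plan is to show that for any $\delta\in[\gamma,\zeta]$ (assuming without loss of generality $\gamma<\zeta$) the $\delta$-weighted system lies in $\rho_\sNmuD^\UPP(A)$, from which $\gamma$ and $\zeta$ must belong to the same connected component of $\rho_\sNmuD^\UPP(A)$. Two subgoals must be verified for the $\delta$-weighted system: (i) it admits $\sNmuD$ with the common projector $n\mapsto\P(n)$, and (ii) this dichotomy satisfies the $\UPP$.

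For (i), the plan is to interpolate using the elementary identity
\[
\Phi_\delta(k,n)=\Big(\tfrac{\mu(k)}{\mu(n)}\Big)^{\gamma-\delta}\Phi_\gamma(k,n)=\Big(\tfrac{\mu(k)}{\mu(n)}\Big)^{\zeta-\delta}\Phi_\zeta(k,n).
\]
For $k\geq n$ one has $\mu(k)/\mu(n)\geq 1$ and $\gamma-\delta\leq 0$, so multiplying the $\gamma$-estimate for $\Phi_\gamma(k,n)\P(n)$ by $(\mu(k)/\mu(n))^{\gamma-\delta}\leq 1$ yields a bound with negative exponent $\alpha_\gamma+\gamma-\delta<0$ and the same nonuniform factor $\mu(n)^{\sgn(n)\theta_\gamma}$. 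Symmetrically, for $k\leq n$ we have $\mu(k)/\mu(n)\leq 1$ and $\zeta-\delta\geq 0$, so the $\zeta$-estimate for $\Phi_\zeta(k,n)[\Id-\P(n)]$ multiplied by $(\mu(k)/\mu(n))^{\zeta-\delta}\leq 1$ yields a bound with positive exponent $\beta_\zeta+\zeta-\delta>0$. This produces $\sNmuD$ for the $\delta$-weighted system with projector $\P$.

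For (ii), the plan is to invoke the USPP conjecture in both directions. Since $\gamma,\zeta\in\rho_\sNmuD^\UPP(A)$, Conjecture~\ref{750} gives $\mathcal{S}_\gamma=\im\,\P=\mathcal{S}_\zeta$ and $\mathcal{U}_\gamma=\ker\,\P=\mathcal{U}_\zeta$. Combining this with the monotonicity relations stated just before Lemma~\ref{753}, namely $\mathcal{S}_\gamma\subset\mathcal{S}_\delta\subset\mathcal{S}_\zeta$ and $\mathcal{U}_\zeta\subset\mathcal{U}_\delta\subset\mathcal{U}_\gamma$, a squeezing argument forces $\mathcal{S}_\delta=\im\,\P$ and $\mathcal{U}_\delta=\ker\,\P$. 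Hence $\mathcal{S}_\delta\cap\mathcal{U}_\delta=\mathbb{Z}\times\{0\}$, so the $\delta$-weighted system has the $\USP$. Applying Conjecture~\ref{750} in the reverse direction to the $\sNmuD$ obtained in (i) then yields the $\UPP$, so $\delta\in\rho_\sNmuD^\UPP(A)$.

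The main conceptual obstacle is step (ii): the algebraic interpolation in (i) never by itself guarantees uniqueness of the projector, as Remark~\ref{746} and Example~\ref{799} already illustrate. The USPP conjecture is the indispensable bridge that converts the geometric identity $\mathcal{S}_\delta\cap\mathcal{U}_\delta=\mathbb{Z}\times\{0\}$ into the required $\UPP$, which is precisely why the lemma can only be stated conditionally.
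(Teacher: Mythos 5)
Your proposal is correct and follows essentially the same route as the paper: part (i) is the same interpolation of the stable estimate from the $\gamma$-weighted system and the unstable estimate from the $\zeta$-weighted system, and part (ii) derives the $\USP$ for the intermediate system from $\mathcal{S}_\delta\cap\mathcal{U}_\delta\subset\im\P\cap\ker\P=\mathbb{Z}\times\{0\}$ and then invokes the USPP conjecture to upgrade to the $\UPP$, exactly as in the paper (which phrases the same containments via an individual bounded solution rather than the manifold squeeze).
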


\begin{proof}
    Suppose $\zeta>\gamma$. We will prove that for any $\eta\in (\gamma,\zeta)$, the $\eta$-weighted system has dichotomy with the same projector as the $\zeta$ and $\gamma$-weighted systems. Set that the $\gamma$-weighted system has $\sNmuD$ verifying $\USP$ with parameters $(\P;\alpha,\beta,\theta,\nu)$, while the $\zeta$-weighted system has $\sNmuD$ verifying $\USP$ with parameters $(\P;\walpha,\wbeta,\wtheta,\wnu)$. For $k\geq n$ we have
    \begin{equation}\label{715}
        \|\Phi_\zeta(k,n)\|\leq \|\Phi_\eta(k,n)\|\leq\|\Phi_\gamma(k,n)\|,
    \end{equation}
therefore
\begin{equation}\label{710}
    \|\Phi_\eta(k,n)\P(n)\|\leq\|\Phi_\gamma(k,n)\P(n)\|\leq K\left(\dfrac{\mu(k)}{\mu(n)}\right)^\alpha \mu(n)^{\sgn(n)\theta},
\end{equation}
   for some $K\geq 1$. On the other hand, for $k\leq n$ we have
\begin{equation}\label{716}
    \|\Phi_\gamma(k,n)\|\leq \|\Phi_\eta(k,n)\|\leq\|\Phi_\zeta(k,n)\|,
\end{equation}
  therefore
  \[\|\Phi_\eta(k,n)[\Id-\P(n)]\|\leq\|\Phi_\zeta(k,n)[\Id-\P(n)]\|\leq  \widetilde{K}\left(\dfrac{\mu(k)}{\mu(n)}\right)^\wbeta \mu(n)^{\sgn(n)\wnu},\]
 for some $\widetilde{K}$. The previous estimation, in conjunction to \eqref{710}, implies that the $\eta$-weighted system has $\sNmuD$ with parameters $(\P;\alpha,\wbeta,\theta,\wnu)$.

It remains to prove that the $\eta$-weighted system has the $\UPP$. As we assume the $\USPP$ conjecture is true, it is enough to verify it has the $\USP$.

Suppose a solution $k\mapsto \Phi_\eta(k,n)\xi$ is bounded on $\mathbb{Z}$. By \eqref{715}, the solution to the $\zeta$-weighted system $k\mapsto \Phi_\zeta(k,n)\xi$ is bounded on $\mathbb{Z}^+$.

Thus, as the $\zeta$-weighted system has the $\USP$ (because it has the $\UPP$, and we suppose the $\USPP$ conjecture holds), we have by Lemma \ref{703} that $(n,\xi)\in \im\,\P$. Similarly, from \eqref{716} we have that the solution of the $\gamma$-weighted system $k\mapsto \Phi_\gamma(k,n)\xi$ is bounded on $\mathbb{Z}^-$, therefore, as the $\gamma$-weighted system has the $\USP$, by Lemma \ref{703} we have $(n,\xi)\in \ker \P$. As $\im\,\P\cap \ker \P=\mathbb{Z}\times \{0\}$, we have $\xi=0$. In other words, if a solution to the $\eta$-weighted system is bounded, it must be trivial, {\it i.e.} it has the $\USP$. In summary, $\eta\in \rho_\sNmuD^\UPP(A)$.

As $\eta$ was arbitrary, then $(\gamma,\zeta)\subset \rho_{\NmuD}^\UPP$, which implies the claim.
\end{proof}

\begin{theorem}\label{772}
    (Spectral Theorem for ${\sNmuD}$ verifying $\UPP$) Assume $\USPP$ conjecture holds. Consider the nonautonomous difference equation \eqref{700}. There exists some $m\in\{0,1,\ldots,d\}$ such that 

    \[
    \Sigma_\sNmuD^\UPP(A)=\bigcup_{i=1}^{m} I_i,
    \]
    where $I_i$ are non-overlapping intervals (maybe empty). Moreover, the following hold: 
    \begin{itemize}
        \item[(i)] If the system \eqref{700} has $(\Nmu,\epsilon)$-growth, then $m\neq 0$, {\it i.e.} the spectrum is nonempty, and bounded.
        \item[(ii)] For every $\gamma\in \rho_{\sNmuD}^\UPP(A)$, the $\gamma$-weighted system has a $\sNmuD$ with a unique invariant projector $n\mapsto \P(n)$.
        \item[(iii)] For $\gamma,\zeta\in \rho_{\sNmuD}^\UPP(A)$, the projectors of their respective weighted systems are the same if and only if $\gamma$ and $\zeta$ belong to the same spectral gap. 
    \end{itemize}
\end{theorem}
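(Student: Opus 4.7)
The overall strategy parallels the proof of Thm~\ref{770}, replacing the standard dichotomy-manifold lemmas by Lemmas~\ref{711} and~\ref{713}. My plan is to establish, in order: (a) boundedness and nonemptiness under $(\Nmu,\epsilon)$-growth; (b) statements (ii) and (iii); (c) the interval decomposition with $m\leq d$.

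For (i), I would first invoke Lemma~\ref{714}: it gives $\Si_\NmuD(A)\subset[-a-\epsilon,a+\epsilon]$, so by the inclusion $\Si_\sNmuD^\UPP(A)\subset\Si_\NmuD(A)$ from \eqref{767} the unique-projector slow spectrum is also contained in $[-a-\epsilon,a+\epsilon]$. For nonemptiness, the proof of Lemma~\ref{714} shows that for every $\gamma>a+\epsilon$ the $\gamma$-weighted system admits $\NmuD$ with projector $\Id$, while for $\gamma<-a-\epsilon$ it admits $\NmuD$ with projector $0$; by Lemma~\ref{705} these are $\sNmuD$'s enjoying the $\UPP$, so both rays $(a+\epsilon,+\infty)$ and $(-\infty,-a-\epsilon)$ lie in $\rho_\sNmuD^\UPP(A)$ but carry dimensions $d$ and $0$, respectively. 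By Lemma~\ref{711} they cannot belong to a single spectral gap, forcing $\Si_\sNmuD^\UPP(A)\neq\emptyset$.

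Claim (ii) is immediate from the very definition of $\rho_\sNmuD^\UPP(A)$. For (iii), the direction ``same gap $\Rightarrow$ same projector'' follows from the proof of Lemma~\ref{706} combined with the local constancy of the dimension map established just above the subsection: the projector persists on a small neighborhood of any $\gamma\in\rho_\sNmuD^\UPP(A)$, so throughout any connected component of the resolvent all weighted systems share the same projector. The converse is precisely Lemma~\ref{711}.

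The core of (c) is to bound the number of spectral gaps. The key combinatorial input is that distinct spectral gaps carry distinct dimensions: chaining Lemma~\ref{713} (same dimension $\Rightarrow$ same projector) with Lemma~\ref{711} (same projector $\Rightarrow$ same gap) gives, by contraposition, distinct gaps $\Rightarrow$ distinct dimensions. Since the dimension map ranges in $\{0,1,\dots,d\}$, there are at most $d+1$ spectral gaps in $\rho_\sNmuD^\UPP(A)$, and hence $\Si_\sNmuD^\UPP(A)$ decomposes into at most $d$ pairwise non-overlapping intervals squeezed between them. The main technical obstacle I foresee is establishing the \emph{monotonicity} $\gamma<\zeta\Rightarrow\dim(\gamma)\leq\dim(\zeta)$, which is needed to sort the gaps and to identify the dimensions of the two unbounded ones under the growth hypothesis; here the $\USPP$ conjecture enters through Lemma~\ref{703}, which identifies $\im\,\P=\mathcal{S}_\gamma$ for the projector $\P$ of the $\gamma$-weighted system, so that the elementary inclusion $\mathcal{S}_\gamma\subset\mathcal{S}_\zeta$ (noted right before Lemma~\ref{753}) yields $\rank\P_\gamma\leq\rank\P_\zeta$; Lemma~\ref{713} then upgrades this weak monotonicity to a strict one across distinct gaps, closing the counting argument.
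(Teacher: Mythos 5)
Your proposal is correct and follows essentially the same route as the paper's proof: boundedness and nonemptiness of $\Sigma_\sNmuD^\UPP(A)$ via Lemma~\ref{714} and the projectors $\Id$ and $0$ on the two unbounded rays, item (ii) by definition, item (iii) from local constancy of the projector together with Lemma~\ref{711}, and the count of at most $d+1$ gaps by chaining Lemmas~\ref{713} and~\ref{711} so that distinct gaps carry distinct dimensions in $\{0,1,\dots,d\}$. The only slip is that in the nonemptiness step the direction you need to separate the two rays is ``same gap $\Rightarrow$ same projector'' (local constancy), not Lemma~\ref{711}; otherwise your added monotonicity discussion via Conjecture~\ref{750} is, if anything, more careful than the paper, which does not address the ordering of the gaps at all.
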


\begin{proof}
By Lemmas \ref{713} and \ref{711}, every connected component of $\rho_\sNmuD^\UPP(A)$ has a unique dimension of projector associated to it. Now note that the dimensions allowed are integers bounded between zero and the dimension of the space. Therefore, the resolvent $\rho_\sNmuD^\UPP(A)$ has finite connected subsets. 

As stated at the beginning of this subsection, if the system has $(\Nmu,\epsilon)$-growth, then $\Sigma_\sNmuD^\UPP(A)$ is once again bounded. From the proof of this Lemma, we see that the projector of the $\gamma$-weighted system is $\Id$ for $\gamma$ large enough and $0$ for $\gamma$ small enough. Thus, as the projector is not the same, this implies that there exist at least two different connected subsets of $\rho_{\sNmuD}^\UPP(A)$, therefore $\Sigma_\sNmuD^\UPP(A)$ is not empty. 
\end{proof}

\begin{remark}
    {\rm    As we can see, we do not have a conclusion stating that the $\Sigma_\sNmuD^\UPP$ spectrum is closed, as it is the case for the $\muD$, $\NmuD$ or $\sNmuD$ spectra. We would like to emphasize that in general the intervals $I_i$ involved in Theorem \ref{772} can be open, closed or even mixed, as seen on Examples \ref{799} and \ref{798}.
    }
\end{remark}

\begin{remark}
{\rm    It remains to be seen if the $\Sigma_\sNmuD$ spectrum also has a finite number of connected components (we already know it is always closed, and if it exhibits nonuniform bounded growth then it is also compact and nonempty). Nevertheless, the whole previous conclusion does not extend to this kind of dichotomy, since we know projectors are not unique in this case. In other words, the spectral gaps for the $\sNmuD$ case do not have a unique projector, therefore we cannot in general stablish a correspondence between projectors and spectral gaps. 
}
\end{remark}

\section{Optimal Dichotomy Constants}\label{optdichconst}

Assume that $\mu:\mathbb{Z}\to \mathbb{R}^+$ is a discrete growth rate. If system \eqref{700} exhibits any type of dichotomy, the parameters involved are not unique. This means the system can also admit a dichotomy under slight perturbations of the initial parameters for which it exhibits a dichotomy. For instance, if system \eqref{700} admits $\sNmuD$ with parameters $(\P;\alpha,\beta,\theta,\nu)$, then it immediately admits $\sNmuD$ with parameters $(\P;\walpha,\wbeta,\wtheta,\wnu)$ for any $\walpha\in(\alpha,0)$, $\wbeta\in (0,\beta)$, $\wtheta\geq \theta$ and $\wnu\geq \nu$. Analogously, if \eqref{700} admits $\NmuD$ with parameters $(\P;\alpha,\theta,\beta,\nu)$, then it also admits $\NmuD$ with parameters  $(\P;\walpha,\wbeta,\wtheta,\wnu)$ for any $\widetilde{\alpha}\in (\alpha,-\theta)$, $\widetilde{\beta}\in (\nu,\beta)$, $\widetilde{\theta}\in (\theta,-\widetilde{\alpha})$, and $\widetilde{\nu}\in (\nu,\widetilde{\beta})$. 

The aforementioned behavior of the dichotomy parameters motivates the following definition; see also \cite[Def.~2.5]{GJ}.

	\begin{definition}
Assume the system \eqref{700} admits $\NmuD$ with an invariant projector $\P$.   The \textbf{region of stable constants} for \eqref{700} is the set defined by
		\[
		\St_\P:=\{(\alpha,\theta)\in \mathbb{R}^2: \text{ \eqref{700} admits }\NmuD\text{ with parameters }(\P;\alpha,\beta,\theta,\nu), \text{ for some }\beta,\nu\},
		\]
		and the \textbf{region of unstable constants} for \eqref{700}  is the set defined by    \[
		\Un_\P:=\{(\beta,\nu)\in \mathbb{R}^2: \text{ \eqref{700} admits }\NmuD\text{ with parameters }(\P;\alpha,\beta,\theta,\nu), \text{ for some }\alpha,\theta\}.
		\]
	\end{definition}

The concept of these regions is also applicable considering the notions of $\muD$ or $\sNmuD$ instead of $\NmuD$. However, as we will see in Prop.~\ref{boundedregions}, the case of $\NmuD$ is particularly interesting if the notion of bounded growth for system \eqref{700} is satisfied.

\begin{remark}\rm 
    In the next result, we will prove item (ii). It is worth noting that Gallegos and Jara, in \cite[Lem.~2.10]{GJ}, only proved (i), leaving (ii) as an exercise for the reader, as its proof is quite similar. Nevertheless, since this result is recent, we will provide an explicit proof of (ii) to offer the reader a complete version of the proposition. Similarly, in this article we will present the part of the proof that is omitted in \cite[Lem.~2.17; Prop.~2.18 \& Lem.~2.19]{GJ} to ensure a detailed and comprehensive understanding of these results; see Prop.~\ref{743} and Thm.~\ref{limits}.  

    It is worth noting that \cite{GJ} is developed on the continuous time framework, but the proving techniques can be directly replicated.
\end{remark}

\begin{proposition}\label{boundedregions}
    Assume the system \eqref{700} admits $\NmuD$ with parameters {\rm ($\P$;$\alpha$,$\beta$,$\theta$,$\nu$)} and {\rm($\Nmu$,$\epsilon$)}-growth with constants $a\geq0$, $\widehat{K}\geq 1$. Then the following inclusions hold 
    \begin{itemize}
        \item[(i)] $\St_{\P}\subset\{(\alpha,\theta)\in(-\infty,0)\times[0,+\infty):-(a+\epsilon)\leq\alpha<\theta< a+\epsilon\}$,  whenever $\P\neq0$.
        \vspace{0.1cm}
        \item[(ii)] $\Un_{\P}\subset\{(\beta,\nu)\in(0,+\infty)\times[0,+\infty):0\leq\nu<\beta\leq a+\epsilon\}$,  whenever $\P\neq\Id$.
    \end{itemize}
\end{proposition}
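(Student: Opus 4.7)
The plan is to verify the inequalities defining $\St_\P$ and $\Un_\P$ one by one, separating those that are immediate from the $\NmuD$ data from the ones that genuinely require the $(\Nmu,\epsilon)$-growth hypothesis. For part (i), the conditions $\alpha<0$, $\theta\geq 0$ and $\alpha+\theta<0$ come directly from Def.~\ref{DefNmuD}, and together they yield $\alpha<-\theta\leq 0\leq\theta$, hence $\alpha<\theta$ for free. For part (ii), $\beta>0$, $\nu\geq 0$ and $\nu<\beta$ are equally immediate. Thus the substantive work is the extremal bounds $-(a+\epsilon)\leq\alpha$ (for (i)) and $\beta\leq a+\epsilon$ (for (ii)), after which $\theta<a+\epsilon$ follows at once from $\theta<-\alpha\leq a+\epsilon$.

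The key idea for $\alpha\geq -(a+\epsilon)$ is that $\P\neq 0$ together with the invariance of $\rank\,\P(n)$ in $n$ allows one to pick a unit vector $\xi_0\in\im\,\P(0)$. For $k\geq 0$, one writes $\xi_0=\Phi(0,k)\Phi(k,0)\xi_0=\Phi(0,k)\Phi(k,0)\P(0)\xi_0$ and estimates
\[
1\;\leq\;\|\Phi(0,k)\|\cdot\|\Phi(k,0)\P(0)\xi_0\|.
\]
Bounding the first factor by $(\Nmu,\epsilon)$-growth and the second by the stable dichotomy estimate, and using $\mu(0)=1$ together with $\sgn(0)=0$ (which wipes out the nonuniform correction factors based at $0$), the right-hand side reduces to $K\widehat{K}\,\mu(k)^{\alpha+a+\epsilon}$. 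Letting $k\to+\infty$, where $\mu(k)\to+\infty$, one concludes that $\alpha+a+\epsilon\geq 0$.

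Part (ii) is handled symmetrically: using $\P\neq\Id$, choose $\xi_0\in\ker\,\P(0)$ with $\|\xi_0\|=1$; then for $k\leq 0$ combine the $(\Nmu,\epsilon)$-growth bound for $\|\Phi(0,k)\|$ with the unstable dichotomy bound for $\|\Phi(k,0)[\Id-\P(0)]\xi_0\|$ to obtain $1\leq K\widehat{K}\,\mu(k)^{\beta-a-\epsilon}$. Since $\mu(k)\to 0$ as $k\to-\infty$, this forces $\beta\leq a+\epsilon$. The main point to justify carefully is the identity $\Phi(0,k)\Phi(k,0)=\Id$, which is automatic when $A$ is nonsingular and is in any event consistent with the $(\Nmu,\epsilon)$-growth hypothesis, since the latter already requires $\|\Phi(k,n)\|$ to be defined for every $(k,n)\in\mathbb{Z}^2$. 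Beyond that, the proof is bookkeeping on the signs in the exponents of $\mu(k)^{\sgn(k)\epsilon}$ and $(\mu(0)/\mu(k))^{\sgn(0-k)a}$, which is what makes the computation routine but not entirely free, and presumably what the authors of \cite{GJ} had in mind when leaving (ii) to the reader.
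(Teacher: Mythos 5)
Your proof is correct and follows essentially the same route as the paper's: compose the forward and backward evolution operators on a nonzero vector in $\im\,\P$ (resp. $\ker\P$), bound one factor by the dichotomy estimate and the other by the $(\Nmu,\epsilon)$-growth estimate, and derive a contradiction in the limit $k\to+\infty$ (resp. $k\to-\infty$) unless $-(a+\epsilon)\leq\alpha$ (resp. $\beta\leq a+\epsilon$); the paper merely keeps a general base point $n$ where you anchor at $0$, and it likewise treats the remaining inequalities as immediate from Def.~\ref{DefNmuD}. Your explicit remark about when $\Phi(0,k)\Phi(k,0)=\Id$ is legitimate and is in fact slightly more careful than the paper, which leaves part (i) to the reader with the same implicit reliance on the growth bound being stated for all $k,n\in\mathbb{Z}$.
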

\begin{proof}
    Let us demonstrate (ii). Assume $k<n$ and $\P\neq\Id$. From above, we can choose $\xi\in(\Id-\P(k))\mathbb{R}^{d}$ with $\xi\neq0$. Then we deduce 
    \begin{equation*}
        \begin{split}
            \|\xi\|&=\|\xi-\P(k)\xi\|=\|\Phi(k,n)\{\Id-\P(n)\}\Phi(n,k)\xi\|\\
            &\leq K\left(\dfrac{\mu(k)}{\mu(n)}\right)^{\beta}\mu(n)^{\sgn(n)\nu}\widehat{K}\left(\dfrac{\mu(n)}{\mu(k)}\right)^{\sgn(n-k)a}\mu(k)^{\sgn(k)\epsilon}\|\xi\|\\
            &=K\widehat{K}\mu(k)^{\beta-a+\sgn(k)\epsilon}\mu(n)^{a+\sgn(n)\nu-\beta}\|\xi\|,
        \end{split}
    \end{equation*}
    which in turn implies 
    \begin{equation}\label{719}   
    1\leq K\widehat{K}\mu(k)^{\beta-a+\sgn(k)\epsilon}\mu(n)^{a+\sgn(n)\nu-\beta}.
    \end{equation}
    Now, considering $\beta-a-\epsilon>0$, and taking $k\to-\infty$, the right-hand side of \eqref{719} tends to zero, which is a contradiction. Hence, we must have $\beta\leq a+\epsilon$. The first inclusion (i) follows by using a similar argument.  
\end{proof}

\begin{remark}
    {\rm
    From Prop.~\ref{boundedregions}, we conclude that if system \eqref{700} exhibits $\NmuD$ and ($\Nmu$,$\epsilon$)-growth, then both the stable region $\St_{\P}$ and the unstable region $\Un_{\P}$ are bounded sets. On the other hand, if the system \eqref{700} admits $\muD$ and $\mu$-growth then $\St_{\P}\subset [-a,0)\times\{0\}$ and $\Un_{\P}\subset(0,a]\times\{0\}$. Therefore, both sets are bounded. However,  if we consider the system \eqref{700} admitting $\sNmuD$ and satisfying ($\Nmu$,$\epsilon$)-growth, we cannot guarantee the boundedness of the regions $\St_{\P}$ and $\Un_{\P}$. In the case of the stable region $\St_{\P}$, while the parameter $\alpha$ is restricted to the range $[-a - \epsilon, 0)$, the parameter $\theta$ possesses no restriction apart from $\theta \geq 0$. Similarly, for the unstable region $\Un_{\P}$, we find that $0 < \beta \leq a + \epsilon$, while $\nu$ can vary freely within the range $[0, +\infty)$.
    }
\end{remark}

We are in a position to introduce the concept of optimal parameters associated to a system admitting $\NmuD$; see \cite[Def.~2.12]{GJ} 
\begin{definition}
		Assume the system \eqref{700} admits  $(\Nmu,\epsilon)$-growth and $\NmuD$ with an invariant projector $\P$. 
		\begin{itemize}
			\item [(i)] The \textbf{optimal stable ratio} is defined by
			\[
			\st_\P:=\inf\{\alpha+\theta:(\alpha,\theta)\in \St_\P\}.
			\]
			\item [(ii)] The \textbf{optimal unstable ratio} is defined by
			\[
			\un_\P:=\sup\{\beta-\nu:(\beta,\nu)\in \Un_\P\}.
			\]
		\end{itemize}
	\end{definition}

\begin{lemma}
    Assume the system \eqref{700} admits $\NmuD$ and {\rm ($\Nmu$,$\epsilon$)}-growth with constants $\widehat{K}\geq1$ and $a>0$. Then the $\gamma$-weighted system \eqref{701} has {\rm($\Nmu$,$\epsilon$)}-growth with constants $\widehat{K}$ and $a+|\gamma|$. 
\end{lemma}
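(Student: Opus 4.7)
The plan is to directly compute the norm of the evolution operator $\Phi_\gamma$ of the weighted system using the formula $\Phi_\gamma(k,n) = \left(\mu(k)/\mu(n)\right)^{-\gamma}\Phi(k,n)$ from \eqref{761}, then apply the assumed $(\Nmu,\epsilon)$-growth bound on $\|\Phi(k,n)\|$ and absorb the extra factor $(\mu(k)/\mu(n))^{-\gamma}$ into the exponent on $(\mu(k)/\mu(n))$. Concretely, I would first write
\[
\|\Phi_\gamma(k,n)\| \leq \widehat{K}\left(\frac{\mu(k)}{\mu(n)}\right)^{\sgn(k-n)a - \gamma} \mu(n)^{\sgn(n)\epsilon},
\]
for all $k,n\in\mathbb{Z}$, which follows by multiplying both sides of the $(\Nmu,\epsilon)$-growth estimate for $\Phi$ by $(\mu(k)/\mu(n))^{-\gamma}$.

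The main (and only) point is then a short case analysis comparing the exponent $\sgn(k-n)a-\gamma$ with the target exponent $\sgn(k-n)(a+|\gamma|)$. I would distinguish three cases. If $k>n$, then $\mu(k)/\mu(n)\geq 1$ (by monotonicity of $\mu$) and the inequality $a-\gamma \leq a+|\gamma|$ holds since $-\gamma\leq|\gamma|$; raising the base $\geq 1$ to a larger exponent preserves the bound. If $k<n$, then $\mu(k)/\mu(n)\leq 1$ and we need $-a-\gamma \geq -a-|\gamma|$, equivalently $\gamma \leq |\gamma|$, which is immediate; raising a base in $(0,1]$ to a smaller exponent only increases it, so the inequality is preserved in the required direction. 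If $k=n$ there is nothing to compare since the exponent vanishes, and the right-hand side is at least $1 \geq \|\Phi_\gamma(n,n)\|=\|\Id\|$ because $\widehat{K}\geq 1$ and $\mu(n)^{\sgn(n)\epsilon}\geq 1$ (using $\mu(n)\geq 1$ for $n\geq 0$ and $\mu(n)\leq 1$ for $n<0$).

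Combining the three cases yields
\[
\|\Phi_\gamma(k,n)\| \leq \widehat{K}\left(\frac{\mu(k)}{\mu(n)}\right)^{\sgn(k-n)(a+|\gamma|)} \mu(n)^{\sgn(n)\epsilon}, \qquad \forall\, k,n\in\mathbb{Z},
\]
which is precisely the $(\Nmu,\epsilon)$-growth condition for \eqref{701} with constants $\widehat{K}$ and $a+|\gamma|$. There is no substantive obstacle here; the argument is purely a manipulation of exponents, and the hypothesis $a>0$ plays no essential role beyond keeping us inside the definition. The only mild subtlety is the sign bookkeeping coming from $\sgn(k-n)$ and the monotonicity of $\mu$, which is precisely what the case split addresses.
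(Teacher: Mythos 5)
Your proof is correct and follows exactly the paper's approach: multiply the $(\Nmu,\epsilon)$-growth bound for $\Phi$ by the weight $(\mu(k)/\mu(n))^{-\gamma}$ and then bound the exponent $\sgn(k-n)a-\gamma$ by $\sgn(k-n)(a+|\gamma|)$ using the monotonicity of $\mu$. The paper compresses the sign bookkeeping into a single displayed inequality, whereas you spell out the three cases explicitly; the content is identical.
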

\begin{proof}
    The assertion follows from the next estimation
    \begin{equation*}
        \begin{split}
            \|\Phi_{\gamma}(k,n)\|&=\left(\dfrac{\mu(k)}{\mu(n)}\right)^{-\gamma}\|\Phi(k,n)\|
            \leq \widehat{K}\left(\dfrac{\mu(k)}{\mu(n)}\right)^{\sgn(k-n)a - \gamma} \mu(n)^{\sgn(n)\epsilon}\\
            &\leq\widehat{K}\left(\dfrac{\mu(k)}{\mu(n)}\right)^{\sgn(k-n)\{a + |\gamma|\}}\mu(n)^{\sgn(n)\epsilon}, \quad \text{for $k,n\in\mathbb{Z}$}.
        \end{split}
    \end{equation*}
\end{proof}

Assume the system \eqref{700} admits {\rm ($\Nmu$,$\epsilon$)}-growth with constants $\widehat{K}\geq1$ and $a>0$. Then, it is well known that the $\NmuD$ spectrum of \eqref{700} is compact and has the form $\Sigma_{\NmuD}(A)=\bigcup_{i=1}^{n}[a_i,b_i]$, for $n\in\mathbb{N}$. From this spectral decomposition we obtain $n+1$ spectral gaps. Specifically, we denote the spectral gaps by $(b_i,a_{i+1})$, with $i=0,1,\ldots,n$, where $b_0=-\infty$ and $a_{n+1}=+\infty$. It is important to emphasize that just two of these spectral gaps are unbounded: $(b_0,a_1)$ and $(b_n,a_{n+1})$. For every $\gamma$ in $(b_i,a_{i+1})$, with $i=0,\ldots,n$, the $\gamma$-weighted system $\eqref{701}$ has $\NmuD$ (with unique invariant projector $s\mapsto\P_{i}(s)$ on every spectral gap $(b_i,a_{i+1})$, $i=0,1,\ldots,n$), and from lemma~2.21, the system has ($\Nmu$,$\epsilon$)-growth with constants $\widehat{K}$ and $a+|\gamma|$. In the particular case of the unbounded spectral gaps, it is known that the projector associated with the interval $(b_0, a_1)$ is $\P = 0$, while for the interval $(b_n, a_{n+1})$ it is $\P=\Id$. Therefore, it has sense to define for every $\gamma\in(b_i,a_{i+1})$, with $i=0,\ldots,n$, the regions of stable and unstable constants for the $\gamma$-weighted system. Let us denote these regions by $\St_{\P_i}^{\gamma}$ and $\Un_{\P_i}^{\gamma}$, respectively. Note that  $\St_{\P_i}^{\gamma}$ and $\Un_{\P_i}^{\gamma}$ are bounded sets: 
\begin{itemize}
    \item For $\gamma\in(b_i,a_{i+1})$, with $i=0,\ldots,n-1$, we get 
    \[
    \Un_{\P_i}^{\gamma}\subset\{(\beta,\nu)\in(0,+\infty)\times[0,+\infty):0\leq\nu<\beta\leq a+|\gamma|+\epsilon\}.
    \]
    \item For $\gamma\in(b_i,a_{i+1})$, with $i=1,\ldots,n$, we get 
    \[
    \St_{\P_i}^{\gamma}\subset\{(\alpha,\theta)\in(-\infty,0)\times[0,+\infty):-(a+|\gamma|+\epsilon)\leq\alpha<\theta< a+|\gamma|+\epsilon\}.
    \]
\end{itemize}

\begin{remark}
    {\rm The boundedness of the stable region and unstable region is uniform in every bounded spectral gap. Indeed, we infer
    \[
    \St_{\P_i}^{\gamma}\subset\{(\alpha,\theta)\in(-\infty,0)\times[0,+\infty):-(a+\epsilon+\tau)\leq\alpha<\theta< a+\tau+\epsilon\},
    \]
    for all $\gamma\in(b_i,a_{i+1})$, with $i=1,\ldots,n-1$, where $\tau=\max\{|b_i|,|a_{i+1}|: i=1,\ldots,n-1\}$. Similarly, we obtain
    \[
    \Un_{\P_i}^{\gamma}\subset\{(\beta,\nu)\in(0,+\infty)\times[0,+\infty):0\leq\nu<\beta\leq a+\tau+\epsilon\},
    \]
    for all $\gamma\in(b_i,a_{i+1})$, with $i=1,\ldots,n-1$. 
    }
\end{remark}

\begin{definition}
{\rm \cite[Def.~2.16]{GJ}}
		Assume the system \eqref{700} admits $(\Nmu,\epsilon)$-growth.    Let $(b_i,a_{i+1})$ be a spectral gap. 
		\begin{itemize}
			\item Let $i=1,\dots,n$. The \textbf{function of optimal stable ratio}  $\st_{\P_{i}}\colon(b_i,a_{i+1})\to \mathbb{R}$ is defined as 
			\[
			\gamma\mapsto \st_{\P_{i}}^\gamma:=\inf\{\alpha+\theta:(\alpha,\theta)\in \St_{\P_{i}}^{\gamma}\}.
			\]		
			\item Let $i=0,\dots,n-1$. The \textbf{function of optimal unstable ratio}  $\un_{\P_{i}}\colon(b_i,a_{i+1})\to \mathbb{R}$ is defined as 
			\[
			\gamma\mapsto \un_{\P_{i}}^\gamma:=\sup\{\beta-\nu:(\beta,\nu)\in \Un_{\P_{i}}^{\gamma}\}.
			\]
		\end{itemize}
	\end{definition}

In the next, we will develop some interesting properties of the optimal --stable and unstable-- ratio maps. In the following statements we are assuming that system \eqref{700} has $(\Nmu,\epsilon)$-growth; see also \cite[Lem.~2.17 \& Prop.~2.18]{GJ}.

\begin{proposition}\label{743}
    On every spectral gap, the map $\un_{\P_{i}}$ --similarly, $\st_{\P_{i}}$-- is decreasing and continuous. 
\end{proposition}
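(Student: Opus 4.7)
The plan is to exploit the explicit scaling relation $\Phi_\zeta(k,n)=(\mu(k)/\mu(n))^{\gamma-\zeta}\Phi_\gamma(k,n)$ between the evolution operators of the $\gamma$- and $\zeta$-weighted systems, a direct consequence of \eqref{761}, in order to set up essentially bijective correspondences between $\Un_{\P_i}^\gamma$ and $\Un_{\P_i}^\zeta$, and between $\St_{\P_i}^\gamma$ and $\St_{\P_i}^\zeta$. The key observation is that the quantities $\beta-\nu$ and $\alpha+\theta$ transform simply under such a shift of weight, from which both (strict) monotonicity and continuity (in fact Lipschitzness) of the optimal ratio maps will follow at once.

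Fix a spectral gap $(b_i,a_{i+1})$ and pick $\gamma<\zeta$ inside it. First I would treat $\un_{\P_i}$. For $k\leq n$ one has $\mu(k)/\mu(n)\leq 1$, so whenever $(\beta,\nu)\in\Un_{\P_i}^\gamma$ the scaling identity immediately yields an unstable estimation for $\Phi_\zeta$ with parameters $(\beta-(\zeta-\gamma),\nu)$, which lies in $\Un_{\P_i}^\zeta$ provided $\beta-\nu>\zeta-\gamma$. Conversely, every $(\beta',\nu')\in\Un_{\P_i}^\zeta$ transfers unconditionally to $(\beta'+(\zeta-\gamma),\nu')\in\Un_{\P_i}^\gamma$, which gives the easy inequality $\un_{\P_i}^\zeta+(\zeta-\gamma)\leq\un_{\P_i}^\gamma$. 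Non-emptiness of $\Un_{\P_i}^\zeta$ (guaranteed by $\zeta$ lying in the resolvent) forces $\un_{\P_i}^\zeta>0$, hence $\un_{\P_i}^\gamma>\zeta-\gamma$; this lower bound ensures that the supremum defining $\un_{\P_i}^\gamma$ is approached by pairs $(\beta,\nu)$ satisfying $\beta-\nu>\zeta-\gamma$, and applying the forward transfer to such pairs produces the reverse inequality. Hence $\un_{\P_i}(\zeta)=\un_{\P_i}(\gamma)-(\zeta-\gamma)$, so $\un_{\P_i}$ is affine of slope $-1$ on the spectral gap, a fortiori strictly decreasing and Lipschitz continuous.

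The map $\st_{\P_i}$ is handled analogously, using $k\geq n$, for which $\mu(k)/\mu(n)\geq 1$. The shift $(\alpha,\theta)\mapsto(\alpha-(\zeta-\gamma),\theta)$ embeds $\St_{\P_i}^\gamma$ unconditionally into $\St_{\P_i}^\zeta$, whereas its inverse reaches only pairs $(\alpha',\theta')\in\St_{\P_i}^\zeta$ with $\alpha'+\theta'<-(\zeta-\gamma)$. Non-emptiness of $\St_{\P_i}^\gamma$ gives $\st_{\P_i}^\gamma<0$ strictly, and the easy inequality $\st_{\P_i}^\zeta\leq\st_{\P_i}^\gamma-(\zeta-\gamma)$ produced by the first embedding forces $\st_{\P_i}^\zeta<-(\zeta-\gamma)$, so the infimum over $\St_{\P_i}^\zeta$ can be approximated by pairs lying in the image of the inverse shift. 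The same double-inequality argument then yields $\st_{\P_i}(\zeta)=\st_{\P_i}(\gamma)-(\zeta-\gamma)$.

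The hard part is precisely the boundary issue just described: one must check that the constraint $\beta-\nu>\zeta-\gamma$ (resp.\ $\alpha'+\theta'<-(\zeta-\gamma)$) does not truncate the relevant supremum (resp.\ infimum). This is where strict positivity of $\un_{\P_i}^\zeta$ and strict negativity of $\st_{\P_i}^\gamma$, both of which follow from $\gamma$ and $\zeta$ belonging to the same spectral gap, play a decisive role. Everything else amounts to bookkeeping of exponents in the scaling identity, with finiteness of the relevant suprema and infima provided by the $(\Nmu,\epsilon)$-growth bound in Prop.~\ref{boundedregions}.
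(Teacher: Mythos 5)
Your proof is correct, but it takes a genuinely different --- and in fact stronger --- route than the paper's. The paper only uses the one-directional inclusion $\Un_{\P_i}^{\zeta}\subset\Un_{\P_i}^{\gamma}$ (the same pair $(\beta,\nu)$ survives the shift of weight) to get monotonicity, and then proves continuity separately by a two-sided contradiction argument (right- and left-continuity as Claims 1 and 2, with a careful choice of $\epsilon_1,\epsilon_2,\epsilon_3$). You instead set up the two-sided transfer $(\beta,\nu)\mapsto(\beta\mp(\zeta-\gamma),\nu)$ and handle the only delicate point --- that the constraint $\beta-\nu>\zeta-\gamma$ must not truncate the supremum --- via the strict positivity of $\un_{\P_i}^{\zeta}$ combined with the unconditional backward transfer, which is exactly the right observation. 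The payoff is the exact translation identity $\un_{\P_i}^{\zeta}=\un_{\P_i}^{\gamma}-(\zeta-\gamma)$ (and likewise for $\st_{\P_i}$), i.e., the optimal ratio maps are affine of slope $-1$ on each spectral gap; this is consistent with the explicit computations in Example \ref{773}, immediately yields strict monotonicity and Lipschitz continuity, and would also streamline Theorem \ref{limits}, since part (ii) becomes immediate and the vanishing limits at the finite spectral endpoints then pin the maps down as $\un_{\P_i}^{\gamma}=a_{i+1}-\gamma$ and $\st_{\P_i}^{\gamma}=b_i-\gamma$. The one ingredient you use implicitly and should cite is that both weighted systems admit $\NmuD$ with the \emph{same} projector $\P_i$ (so the transferred unstable estimate can be paired with an existing stable estimate to constitute a full dichotomy, and hence membership in $\Un_{\P_i}^{\zeta}$); this is guaranteed by Lemma \ref{753} and the spectral theorem.
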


\begin{proof}
    At first, we prove that $\un_{\P_{i}}$ is decreasing. Consider $\zeta,\gamma\in(b_i,a_{i+1})$ with $\zeta>\gamma$. Then we can see $\Un_{\P_{i}}^{\zeta}\subset\Un_{\P_{i}}^{\gamma}$, from which we will immediately deduce $\un_{\P_{i}}^{\zeta}\leq\un_{\P_{i}}^{\gamma}$. Indeed, for $(\beta,\nu)\in\Un_{\P_{i}}^{\zeta}$, there exists $K\geq1$ such that 
    \[
    \|\Phi_{\zeta}(k,n)\{\Id-\P_{i}(n)\}\|\leq K\left(\dfrac{\mu(k)}{\mu(n)}\right)^{\beta}\mu(n)^{\sgn(n)\nu}, \quad \text{ for all $n\geq k$}.
    \]
    On the other hand, for every $k,n\in\mathbb{Z}$ we get 
    \[
\Phi_{\gamma}(k,n)=\Phi_{\zeta}(k,n)\left(\dfrac{\mu(k)}{\mu(n)}\right)^{\zeta-\gamma}.
    \]
Moreover, since $\zeta>\gamma$, for $n\geq k$ we obtain $\left(\dfrac{\mu(k)}{\mu(n)}\right)^{\zeta-\gamma}\leq1$. Therefore, we infer
\begin{equation*}
    \begin{split}
       \|\Phi_{\gamma}(k,n)\{\Id-\P_{i}(s)\}\|&=\left(\dfrac{\mu(k)}{\mu(n)}\right)^{\zeta-\gamma}\|\Phi_{\zeta}(k,n)\{\Id-\P_{i}(n)\}\|\\
       &\leq K\left(\dfrac{\mu(k)}{\mu(n)}\right)^{\beta}\mu(n)^{\sgn(n)\nu}, \quad \text{ for all $n\geq k$},
    \end{split}
\end{equation*}
which in turn implies that $(\beta,\nu)\in\Un_{\P_{i}}^{\gamma}$.

Now we prove $\un_{\P_i}$ is continuous. Since $\un_{\P_{i}}$ is a decreasing function, we know that its lateral limits exist at every $\gamma\in (b_i,a_{i+1})$.

\noindent{\bf Claim 1:} The function $\un_{\P_{i}}$ is right-continuous at $\gamma\in(b_i,a_{i+1})$. By contradiction, suppose that is not the case. Then there exists $\epsilon_1>0$ such that $\un_{\P_{i}}^{\gamma}-\epsilon_1>\un_{\P_{i}}^{\zeta}$, for all $\zeta>\gamma$. Consider $0<\epsilon_2<\epsilon_1/3$, and choose $(\beta^{\gamma},\nu^{\gamma})\in\Un_{\P_{i}}^{\gamma}$ such that 
\begin{equation}\label{724}
    \un_{\P_{i}}^{\gamma}-\epsilon_2<\beta^{\gamma}-\nu^{\gamma}<\un_{\P_{i}}^{\gamma}.
\end{equation}
There exists $K\geq1$ such that 
\begin{equation*}
		\|\Phi_\gamma(k,n)\{\Id-\P_{i}(n)\}\|\leq K\left(\frac{\mu(k)}{\mu(n)}\right)^{\beta^\gamma}\mu(n)^{\sgn(n)\nu^\gamma},\quad \text{ for all $n\geq k$}.
		\end{equation*}
Choose now $\epsilon_3>0$ such that $\epsilon_3<\min\{\beta^\gamma-\nu^\gamma,\epsilon_1/3\}$. For $\gamma<\zeta<\gamma+\epsilon_3$ we have
		\begin{align*}
		\|\Phi_\zeta(k,n)\{\Id-\P_{i}(n)\}\|&\leq K\left(\frac{\mu(k)}{\mu(n)}\right)^{\beta^\gamma+\gamma-\zeta}\mu(n)^{\sgn(n)\nu^\gamma}\\
		&\leq K\left(\frac{\mu(k)}{\mu(n)}\right)^{\beta^\gamma-\epsilon_3}\mu(n)^{\sgn(n)\nu^\gamma} ,\quad \text{ for all $n\geq k$}.
		\end{align*}
Now, since $\beta^\gamma-\epsilon_3-\nu^\gamma>0$, we infer that  $(\beta^\gamma-\epsilon_3,\nu^\gamma)\in \Un_\P^\zeta$. Hence, from our first supposition and definition of $\un_{\P_{i}}^{\zeta}$, we deduce
\[
\un_{\P_{i}}^{\gamma}-\epsilon_1>\un_{\P_{i}}^{\zeta}\geq \beta^\gamma-\epsilon_3-\nu^\gamma.
\]
Moreover, from \eqref{724} we get
\[
\beta^\gamma-\epsilon_3-\nu^\gamma>\un_{\P_{i}}^{\gamma}-\epsilon_2-\epsilon_3>\un_{\P_{i}}^{\gamma}-\epsilon_1, 
\]
from which clearly we obtain a contradiction.

\noindent{\bf Claim 2:} The function $\un_{\P_{i}}$ is left-continuous at $\gamma\in(b_i,a_{i+1})$. By contradiction, suppose that is not the case. Then there exists $\epsilon_1>0$ such that $\un_{\P_{i}}^{\zeta}-\epsilon_1>\un_{\P_{i}}^{\gamma}$, for all $\zeta<\gamma$. Consider $0<\epsilon_2<\epsilon_1/3$ and $\zeta\in(\gamma-\epsilon_2,\gamma)$. Choose $\epsilon_3<\epsilon_1/3$ and a pair $(\beta^{\zeta},\nu^{\zeta})\in\Un_{\P_{i}}^{\zeta}$ such that 
\begin{equation}\label{725}
    \un_{\P_{i}}^{\zeta}-\epsilon_3<\beta^{\zeta}-\nu^{\zeta}<\un_{\P_{i}}^{\zeta}
\end{equation}
Then there exists $K\geq1$ such that .
		\begin{equation*}
		\|\Phi_\zeta(k,n)\{\Id-\P_{i}(n)\}\|\leq K\left(\frac{\mu(k)}{\mu(n)}\right)^{\beta^\zeta}\mu(n)^{\sgn(n)\nu^\zeta},\quad \text{ for all $n\geq k$}.
		\end{equation*}
From which we deduce
		\begin{align*}
		\|\Phi_\gamma(k,n)\{\Id-\P_{i}(n)\}\|&\leq K\left(\frac{\mu(k)}{\mu(n)}\right)^{\beta^\zeta+\zeta-\gamma}\mu(n)^{\sgn(n)\nu^\zeta}\nonumber\\
		&\leq K\left(\frac{\mu(k)}{\mu(n)}\right)^{\beta^\zeta-\epsilon_2}\mu(n)^{\sgn(n)\nu^\zeta} ,\quad \text{ for all $n\geq k$}.
		\end{align*}
Now, by using \eqref{725} and the first supposition, we obtain 
\[
0\leq\un_{\P_{i}}^{\gamma}<\un_{\P_{i}}^{\zeta}-\epsilon_1<\un_{\P_{i}}^{\zeta}-\epsilon_2-\epsilon_3<\beta^\zeta-\epsilon_2-\nu^{\zeta}.
\]
Therefore, the pair $(\beta^\zeta-\epsilon_2,\nu^{\zeta})$ belongs to $\Un_{\P_{i}}^{\gamma}$. However, we have $\beta^\zeta-\epsilon_2-\nu^{\zeta}>\un_{\P_{i}}^{\gamma}$, which contradicts the maximality of  $\un_{\P_{i}}^{\gamma}$. 
\end{proof}

The following result highlights a key property of the optimal ratio maps $\un_{\P_{i}}$ and $\st_{\P_{i}}$. For a complete proof, see \cite[Thm.~2.18 \& Lem.~2.22]{GJ}.

\begin{theorem}\label{limits}
    For every bounded spectral gap $(b_i,a_{i+1})$, i.e. for $i=1,\ldots,n-1$, we have 
    \[
    \lim_{\gamma\to b_i^{+}}\st_{\P_{i}}^{\gamma}=\lim_{\gamma\to a_{i+1}^{-}}\un_{\P_{i}}^{\gamma}=0.
    \]
    Moreover, the following limits hold 
    \begin{itemize}
        \item[(i)] $\displaystyle\lim_{\gamma\to b_{n}^{+}}\st_{\Id}^{\gamma}=\lim_{\gamma\to a_{1}^{-}}\un_{0}^{\gamma}=0$
        \item[(ii)] $\displaystyle\lim_{\gamma\to +\infty}-\st_{\Id}^{\gamma}=\lim_{\gamma\to -\infty}\un_{0}^{\gamma}=+\infty$.
    \end{itemize}
\end{theorem}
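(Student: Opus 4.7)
The plan is to argue by contradiction for the vanishing limits, and by a direct estimate for part (ii). In each case we exploit two structural facts already established: the monotonicity and continuity of $\st_{\P_i}$ and $\un_{\P_i}$ (Proposition \ref{743}), together with the fact that every spectral endpoint lies in the closed spectrum $\Sigma_{\NmuD}(A)$ (Theorem \ref{770}).

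Consider first a bounded gap $(b_i,a_{i+1})$ with $1\le i\le n-1$. Since $\st_{\P_i}^\gamma<0$ on the gap and the map is decreasing, the limit $L:=\lim_{\gamma\to b_i^+}\st_{\P_i}^\gamma$ exists and satisfies $L\le 0$. Suppose for contradiction that $L<0$. Then for every $\gamma\in(b_i,a_{i+1})$, by definition of the infimum we may choose $(\alpha,\theta)\in\St_{\P_i}^\gamma$ with $\alpha+\theta\le L+|L|/4=-3|L|/4$. Hence the $\gamma$-weighted system admits $\NmuD$ with parameters $(\P_i;\alpha,\beta,\theta,\nu)$ for some $(\beta,\nu)\in\Un_{\P_i}^\gamma$. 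Replaying the perturbation computation from the proof of Lemma \ref{706}, for every $\zeta\in(\gamma-|\alpha+\theta|,\gamma)$ the $\zeta$-weighted system admits $\NmuD$ with the \emph{same} projector $\P_i$ and shifted parameters $(\P_i;\alpha+\gamma-\zeta,\beta+\gamma-\zeta,\theta,\nu)$, whose stable sum remains strictly negative and whose unstable difference remains positive. Choosing now $\gamma$ close enough to $b_i$ so that $\gamma-b_i<|\alpha+\theta|/2$, the open interval $(\gamma-|\alpha+\theta|,\gamma)$ contains $b_i$, forcing $b_i\in\rho_{\NmuD}(A)$, which contradicts $b_i\in\Sigma_{\NmuD}(A)$. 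Hence $L=0$.

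The proof of $\lim_{\gamma\to a_{i+1}^-}\un_{\P_i}^\gamma=0$ follows symmetric lines, now assuming the limit equals some $L'>0$ and perturbing a $\gamma$ close to $a_{i+1}$ by a small positive shift $\zeta-\gamma\in(0,\beta-\nu)$ until the resulting resolvent interval contains $a_{i+1}$. Item (i) is obtained by repeating exactly this scheme on the unbounded gaps $(b_n,+\infty)$ and $(-\infty,a_1)$, whose associated projectors are respectively $\Id$ and $0$, yielding the forbidden memberships $b_n\in\rho_{\NmuD}(A)$ or $a_1\in\rho_{\NmuD}(A)$. For item (ii) no contradiction is needed: by the $(\Nmu,\epsilon)$-growth hypothesis, the proof of Lemma \ref{714} shows that for every $\gamma>a+\epsilon$ the $\gamma$-weighted system admits $\NmuD$ with parameters $(\Id;a-\gamma,*,\epsilon,*)$, so $\st_{\Id}^\gamma\le(a-\gamma)+\epsilon$ and $-\st_{\Id}^\gamma\ge\gamma-a-\epsilon\to+\infty$ as $\gamma\to+\infty$. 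The symmetric bound using parameters $(0;*,-\gamma-a,*,\epsilon)$ for $\gamma<-(a+\epsilon)$ gives $\un_0^\gamma\ge-\gamma-a-\epsilon\to+\infty$ as $\gamma\to-\infty$.

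The main subtlety lies in the perturbation step: one must ensure that the projector is preserved across the shift $\gamma\leadsto\zeta$, so that the $\NmuD$ constructed for the $\zeta$-weighted system carries the \emph{same} invariant projector $\P_i$. This is precisely the content of Remark \ref{746} combined with the $\UPP$ property of $\NmuD$ from Lemma \ref{705}; once this is in place, the contradiction is delivered cleanly by the closedness of $\Sigma_{\NmuD}(A)$, and no case distinction according to whether $a_i=b_i$ or $a_i<b_i$ is required.
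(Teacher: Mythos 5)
Your proposal is correct, and it is in fact more complete than what the paper itself writes down: the paper only proves one of the limits, namely $\lim_{\gamma\to-\infty}\un_{0}^{\gamma}=+\infty$, and defers everything else to \cite{GJ}. For that one limit your argument and the paper's coincide in substance — both rest on the observation from the proof of Lemma \ref{714} that $(\Nmu,\epsilon)$-growth places $(-\gamma-a,\epsilon)$ in $\Un_0^{\gamma}$, hence $\un_0^{\gamma}\geq-\gamma-a-\epsilon$; the paper packages this as a contradiction with a putative bound $M$, while you read off the divergence directly, which is if anything cleaner.

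For the vanishing limits at the finite endpoints, your contradiction argument via the shift $\gamma\leadsto\zeta$ and the closedness of $\Sigma_{\NmuD}(A)$ is sound: if $\st_{\P_i}^{\gamma}$ stayed bounded away from $0$ near $b_i$, the perturbation computation of Lemma \ref{706} (with radius controlled by $|\alpha+\theta|$ rather than by $\min\{-\alpha,\beta\}$) would push the resolvent past $b_i$. One small point worth making explicit: the pair $(\alpha,\theta)$ you select depends on $\gamma$, so "choose $\gamma$ with $\gamma-b_i<|\alpha+\theta|/2$" is formally circular; but since $|\alpha+\theta|\geq 3|L|/4$ uniformly in $\gamma$, fixing $\gamma-b_i<3|L|/8$ in advance repairs this immediately. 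Also note that for the contradiction you only need $b_i\in\rho_{\NmuD}(A)$, i.e.\ that the $b_i$-weighted system admits \emph{some} $\NmuD$; the projector-preservation discussion in your closing paragraph is true but not actually required for the argument to close. With these cosmetic remarks, the proof stands.
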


\begin{proof} Let us prove $\lim_{\gamma\to -\infty}\un_{0}^{\gamma}=+\infty$. Since $\un_{0}$ is decreasing and continuous, it suffices to show that it cannot be bounded on $(-\infty,a_1)$. By contradiction, assume that $\un_0$ is bounded, {\it i.e.} there exists $M>0$ such that $0<\un_0^{\gamma}<M$, for all $\gamma<a_1$. 

Let $n\geq k$. From the ($\Nmu$,$\epsilon$)-growth, we get 
\[
\|\Phi(k,n)\|\leq\widehat{K}\left(\dfrac{\mu(k)}{\mu(n)}\right)^{-a}\mu(n)^{\sgn(n)\epsilon}.
\]
Now, consider $\gamma<-(M+a+\epsilon)$. From above, we deduce
\[
\|\Phi_{\gamma}(k,n)\|\leq\widehat{K}\left(\dfrac{\mu(k)}{\mu(n)}\right)^{-a-\gamma}\mu(n)^{\sgn(n)\epsilon},\quad \text{ for all $n\geq k$}.
\]
The previous estimation defines a dichotomy with parameters $(0;*,-\gamma-a,*,\epsilon)$, or in other words, the pair $(-\gamma-a,\epsilon)$ belongs to $\Un_0^{\gamma}$. Hence, we infer $-\gamma-a-\epsilon\leq\un_0^{\gamma}$. However, this leads to a contradiction as we have
\[
M<-\gamma-a-\epsilon\leq\un_{0}^{\gamma}<M.
\]
\end{proof}

Figure 1 illustrates an example of spectral decomposition in two compact intervals, {\it i.e.} $\Sigma_{\NmuD}=[a_1, b_1] \cup [a_2, b_2]$, together with the graph of the functions $\un_{\P}$ and $\st_{\P}$ defined on the spectral gaps. 

\begin{figure}[h]
		\centering
		\includegraphics[width=0.70\textwidth]{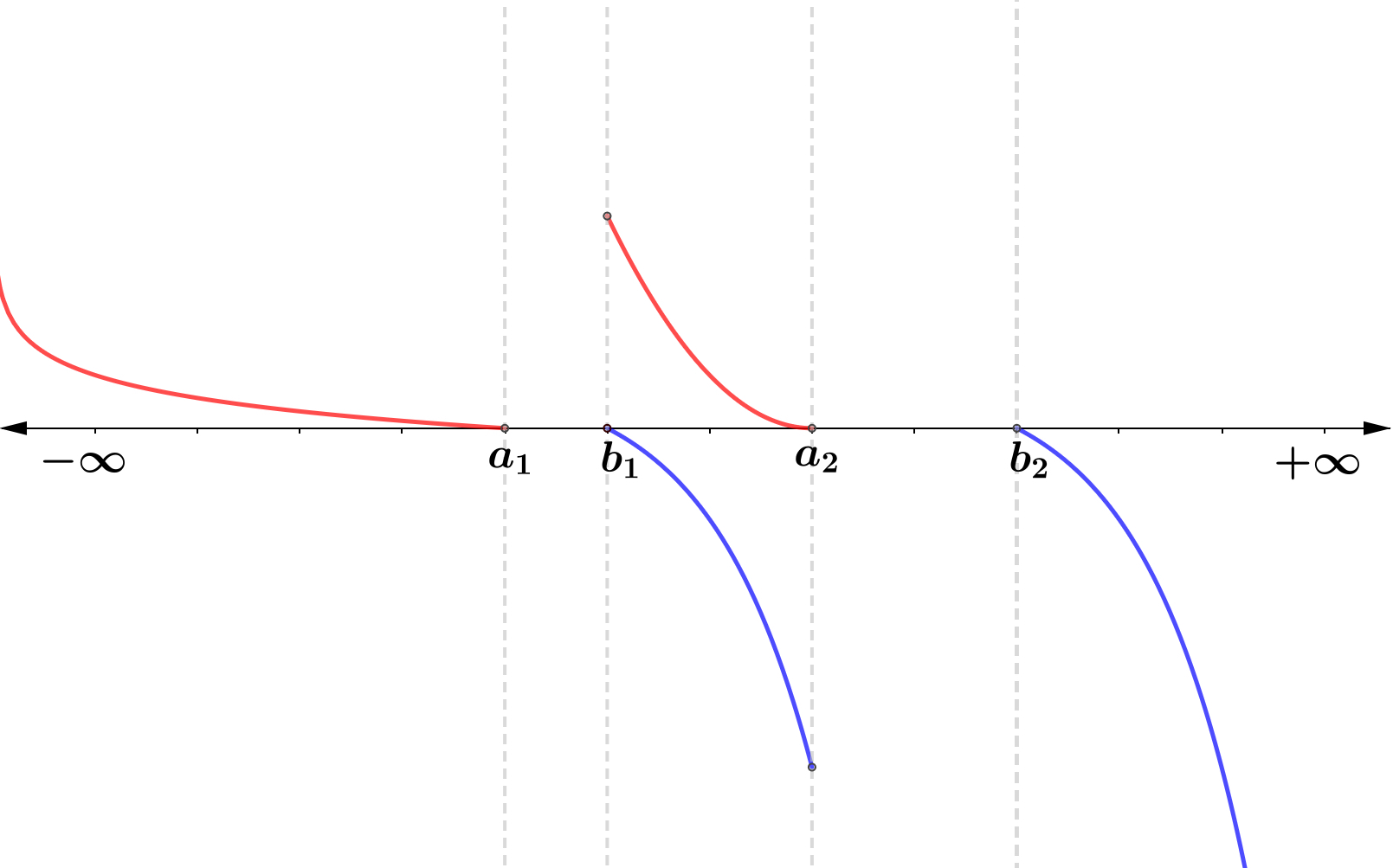}
		\caption{Sketch of the graph of the functions $\un_{\P}$ and $\st_{\P}$ defined on the spectral gaps $(-\infty,a_1)$, $(b_1,a_2)$, and $(b_2,+\infty)$.}
\end{figure}

\begin{remark}\label{boundarypoints}
    {\rm When the system \eqref{700} admits $(\Nmu,\epsilon)$-growth, we know that its nonuniform $\mu$-dichotomy spectrum as the form $\Sigma_{\NmuD}(A)=\cup_{i=1}^{m}[a_i,b_i]$, with $m\in\{1,\ldots,d\}$. Moreover, we can define the optimal ratio maps on every spectral gap, which are decreasing and continuous functions satisfying Thm.~\ref{limits}. Therefore, note that if there are some numbers $a,b\in\mathbb{R}$ such that $\lim_{\gamma\to a^{-}}\un_{\P}^{\gamma}=0$ and $\lim_{\gamma\to b^{+}}\st_{\P}^{\gamma}=0$, then those numbers must be boundary points of $\Sigma_{\NmuD}(A)$, that means, $a$ is exactly one of the $a_i$ and $b$ is exactly one of the $b_i$ from the spectral decomposition, with $i\in\{1,\ldots,m\}$.}
\end{remark}

\section{Kinematic Similarity and Spectra Non-invariance}

In this section, contrary to the uniform case, we will observe that discrete nonautonomous linear systems which exhibit nonuniform kinematic similarity do not preserve the nonuniform dichotomy spectrum.

\subsection{Continuous Nonuniform Kinematic Similarity}
We commence by recalling the concept of $(\mu,\varepsilon)$-kinematic similarity introduced in \cite{Silva}, which is given in the continuous case, {\it i.e.} the map $t \mapsto A(t) \in \mathbb{R}^{d\times d}$ is locally integrable and $t\in\mathbb{R}$.

Let us consider the nonautonomous linear differential equation
\begin{equation}\label{800}
    \dot{x}=A(t)x, \qquad t\in\mathbb{R}, 
\end{equation}
where $t \mapsto A(t) \in \mathbb{R}^{d\times d}$ is locally integrable. 
\begin{definition}\label{KS}
		Let $\mu\colon\mathbb{R}\to(0,+\infty)$ be a growth rate and $t \mapsto B(t)\in\mathbb{R}^{d\times d}$ be a locally integrable map. Given $\varepsilon\geq0$, the system \eqref{800} is \textbf{nonuniformly $(\mu,\varepsilon)$-kinematically similar}, to a system 
  \begin{equation}\label{727}
      \dot{y}=B(t)y, \qquad t\in\mathbb{R},
  \end{equation}
  if there is a constant $M_\varepsilon>0$, and a differentiable matrix function $S\colon\mathbb{R}\to GL_d(\mathbb{R})$ satisfying the following properties:
		\begin{itemize}
			\item[(i)] $\|S(t)\|\leq M_\varepsilon\mu(t)^{\sgn(t)\varepsilon}$, for all $t\in\mathbb{R}$.
			\item[(ii)]$\|S(t)^{-1}\|\leq M_\varepsilon\mu(t)^{\sgn(t)\varepsilon}$, for all $t\in\mathbb{R}$.
			\item[(iii)]If $t\mapsto y(t)$ is a solution for \eqref{727}, then $t\mapsto x(t):=S(t)y(t)$ is a solution for \eqref{800}.
			\item[(iv)] If $t\mapsto x(t)$ is a solution of \eqref{800}, then $t\mapsto y(t):=S(t)^{-1}x(t)$ is a solution of \eqref{727}.
			\end{itemize}
In the particular case that $\varepsilon=0$, it is said that systems \eqref{800} and \eqref{727}  are \textbf{uniformly kinematically similar} or simply \textbf{kinematically similar}.
\end{definition}

Every differentiable matrix function $S\colon\mathbb{R}\to GL_d(\mathbb{R})$ satisfying (i) and (ii) for some $\varepsilon\geq0$ is called a nonuniform Lyapunov matrix function with respect to $\mu$ and the transformation of coordinates $y(t)=S(t)^{-1}x(t)$ is said a nonuniform Lyapunov transformation with respect to $\mu$.

\begin{lemma}
		{\rm \cite[Lemma 13]{Silva}} Let $S\colon\mathbb{R}\to GL_d(\mathbb{R})$ be a nonuniform Lyapunov matrix function with respect to $\mu$ for some $\varepsilon\geq0$. Then, the following statements are equivalent:
		\begin{itemize}
			\item [(a)] $S(t)$ verifies {\rm(iii)} and {\rm (iv)} from Definition \ref{KS}.
			\item [(b)] The identity $\Phi(t,s)S(s)=S(t)\Psi(t,s)$ holds, for all $t,s\in \mathbb{R}$, where $\Phi$ and $\Psi$ are the evolution operators of \eqref{800} and \eqref{727}, respectively.
			\item [(c)] $S(t)$ is a solution of $\dot{S}=A(t)S-SB(t)$.
		\end{itemize}
	\end{lemma}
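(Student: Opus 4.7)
The plan is to establish the equivalence by proving the cyclic chain of implications $(a) \Rightarrow (b) \Rightarrow (c) \Rightarrow (a)$. I expect no genuine obstacle here: this is a textbook manipulation of linear evolution operators and of the matrix ODE governing a change of coordinates. The only ingredients from the hypothesis that are actually used are the differentiability of $S$ and the pointwise invertibility $S(t) \in GL_d(\mathbb{R})$; the bounds (i)--(ii) on $S$ and $S^{-1}$ play no role in this equivalence, which is purely algebraic/analytic.

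For $(a) \Rightarrow (b)$, I would fix $s \in \mathbb{R}$ and $\eta \in \mathbb{R}^d$, and consider the solution $y(t) = \Psi(t,s)\eta$ of \eqref{727}. By (iii), the map $t \mapsto S(t)\Psi(t,s)\eta$ solves \eqref{800}, so by uniqueness of the initial value problem it must equal $\Phi(t,s)x(s) = \Phi(t,s)S(s)\eta$. Letting $\eta$ range over $\mathbb{R}^d$ gives the matrix identity $\Phi(t,s)S(s) = S(t)\Psi(t,s)$. For $(b) \Rightarrow (c)$, I would differentiate this identity in $t$, invoking $\partial_t \Phi(t,s) = A(t)\Phi(t,s)$ and $\partial_t \Psi(t,s) = B(t)\Psi(t,s)$, and then use (b) once more on the right-hand side to obtain
\[\dot{S}(t)\Psi(t,s) + S(t)B(t)\Psi(t,s) = A(t)S(t)\Psi(t,s).\]
Evaluating at $s = t$, where $\Psi(t,t) = I$, yields the claimed matrix ODE $\dot{S}(t) = A(t)S(t) - S(t)B(t)$.

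For $(c) \Rightarrow (a)$, both (iii) and (iv) follow by direct differentiation. Given a solution $y$ of \eqref{727}, I set $x(t) := S(t)y(t)$ and compute
\[\dot{x}(t) = \dot{S}(t)y(t) + S(t)\dot{y}(t) = [A(t)S(t) - S(t)B(t)]y(t) + S(t)B(t)y(t) = A(t)x(t),\]
which establishes (iii). For (iv), I would first differentiate $S(t)S(t)^{-1} = I$ to obtain $(S^{-1})'(t) = -S(t)^{-1}\dot{S}(t)S(t)^{-1}$, which by (c) simplifies to $-S(t)^{-1}A(t) + B(t)S(t)^{-1}$. Then, given a solution $x$ of \eqref{800} and setting $y(t) := S(t)^{-1}x(t)$, a one-line product-rule computation produces $\dot{y}(t) = B(t)S(t)^{-1}x(t) = B(t)y(t)$, completing the chain of implications.
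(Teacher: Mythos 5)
Your cyclic argument $(a)\Rightarrow(b)\Rightarrow(c)\Rightarrow(a)$ is correct and is the standard proof of this equivalence; the paper itself gives no proof, simply citing \cite[Lemma 13]{Silva}, and your argument is exactly what that reference does. The only cosmetic caveat is that since $A$ and $B$ are merely locally integrable, the differentiations in $(b)\Rightarrow(c)$ and $(c)\Rightarrow(a)$ hold in the Carath\'eodory (almost-everywhere) sense, which does not affect the conclusion.
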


\begin{remark}
{\rm Considering that $\mu(t)=e^{t}$, for all $t\in\mathbb{R}$, the above Def.~\ref{KS} coincide with the notion of nonuniform kinematic similarity defined and used in literature so far, see e.g. \cite{Chu,Xiang}. In addition, in the uniform case $\varepsilon=0$, coincide with the classical notion of kinematic similarity, see for instance \cite{Siegmund2}.}
\end{remark}

In the autonomous case, we can think of kinematic similarity as a property that in some cases simplifies a linear system $\dot{x} = Ax$ into a more convenient form, such as an uncoupled or a block diagonalized system, see \cite[Subsec.~1.2]{Perko}. On the other hand, in the nonautonomous framework, a classical example of kinematic similarity is provided by Floquet's theorem \cite{Floquet}. Indeed, consider the system \eqref{700} with $t \mapsto A(t)\in\mathbb{R}^{d\times d}$ continuous and $\omega$-periodic. The map $S:\mathbb{R} \to GL_d(\mathbb{R})$ defined by $S(t) = X(t)e^{-Qt}$, for all $t \in \mathbb{R}$, is $\omega$-periodic and a Lyapunov transformation, where $X(t)$ is a fundamental matrix associated to \eqref{800} with $X(0) = \Id$, and $Q = \frac{1}{\omega} \ln X(\omega)$, known in the literature as the monodromy matrix. In this case, the nonautonomous system \eqref{800} is kinematically similar to the autonomous linear system $\dot{y} = Qy$.

\subsection{Discrete Nonuniform Kinematic Similarity} In contrast to the continuous case, as far as we know, the only notion of discrete nonuniform kinematic similarity available in literature is stated in \cite{Chu2}. This definition differs from the continuous case due to a particular condition over the nonuniformity. For this reason, the authors called it {\it weak kinematic similarity}. 

To introduce the concept of weak kinematic similarity from \cite{Chu2}, we recall that the notion of nonuniform exponential dichotomy discussed in that article is expressed as in equation \eqref{728}, which is a particular case of Def.~\ref{DefNmuD} with $\mu(n)=e^{n}$, $a=e^{\alpha}$, $\beta=-\alpha$, $\theta=\ln(\varepsilon)$, $\nu=\theta$ and $\alpha+2\theta<0$, see Rem.~\ref{729} and \ref{730}.

Let us consider the nonautonomous linear difference equation
\begin{equation}\label{801}
    y(k+1)=B(k)y(k), \qquad k\in\mathbb{Z}. 
\end{equation}
 
\begin{definition}\label{weaknondegenerate}{\rm (\cite[Def.~3.3]{Chu2})}
    The map $S:\mathbb{Z}\to GL_{d}(\mathbb{R})$ is called \textbf{weakly nondegenerate} if there exists a constant $M=M_\varepsilon>0$ such that
    \[
    \|S(k)\|\leq M\varepsilon^{|k|} \quad \text{ and } \quad \|S(k)^{-1}\|\leq M\varepsilon^{|k|}, \text{ for all $k\in\mathbb{Z}$}, 
    \]
    where $\varepsilon$ is the same constant that in \eqref{728}.
\end{definition}

\begin{remark}\label{748}
    {\rm Note that $M\varepsilon^{|k|}$ can be simply seen as $M\mu(k)^{\sgn(k)\theta}$, where $\mu$ is the exponential map. The consideration of the parameter $\theta$ being the same as the error in the nonuniform $\mu$-dichotomy significantly differs from the continuous case of a nonuniform Lyapunov transformation, where the parameter is not necessarily the same as the error in the nonuniform dichotomy.}  
\end{remark}

\begin{definition}
The system \eqref{700} is \textbf{weakly kinematically similar} to \eqref{801} if there exists a weakly nondegenerate matrix function $S$ such that 
\[
S(k+1)B(k)=A(k)S(k), \qquad \text{for all $k\in\mathbb{Z}$.}
\]
\end{definition}

In the next statement, it is considered the growth rate $\mu(n)=e^{n}$, for all $n\in\mathbb{Z}$. 

\begin{corollary}{\rm (\cite[Cor.~3.9]{Chu2})}\label{749}
    Assume that system \eqref{700} is weakly kinematically similar to \eqref{801}. Then $\Si_{\NmuD}(A)=\Si_{\NmuD}(B)$. 
\end{corollary}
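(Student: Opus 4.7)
The plan is to exploit the symmetry of weak kinematic similarity—$S^{-1}$ is weakly nondegenerate with the same constants, so the similarity relation inverts directly—in order to reduce the desired equality $\Si_{\NmuD}(A)=\Si_{\NmuD}(B)$ to the single inclusion $\rho_{\NmuD}(A)\subseteq\rho_{\NmuD}(B)$. First I would lift the similarity to the weighted systems: induction from $S(k+1)B(k)=A(k)S(k)$ yields $\Phi(k,n)S(n)=S(k)\Psi(k,n)$, where $\Phi$ and $\Psi$ are the evolution operators of \eqref{700} and \eqref{801}, and multiplication by the scalar $(\mu(k)/\mu(n))^{-\gamma}$ transports the identity to $\Phi_\gamma$ and $\Psi_\gamma$.

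Next, I fix $\gamma\in\rho_{\NmuD}(A)$, so that the $\gamma$-weighted \eqref{700} admits $\NmuD$ with parameters $(\P;\alpha,\beta,\theta,\nu)$, and I set $\widetilde{\P}(n):=S(n)^{-1}\P(n)S(n)$. A short calculation using the original invariance $A(k)\P(k)=\P(k+1)A(k)$ together with the lifted similarity shows that $\widetilde{\P}$ is an invariant projector for the $\gamma$-weighted \eqref{801}. To produce the candidate dichotomy bounds I would sandwich: for $k\geq n$,
\[
\|\Psi_\gamma(k,n)\widetilde{\P}(n)\|\leq\|S(k)^{-1}\|\,\|\Phi_\gamma(k,n)\P(n)\|\,\|S(n)\|\leq M^{2}K\,e^{\alpha(k-n)+\theta(|k|+2|n|)},
\]
and then apply $|k|\leq|n|+|k-n|$, which in this regime gives $|k|+2|n|\leq 3|n|+(k-n)$, to obtain the bound $M^{2}K\,e^{(\alpha+\theta)(k-n)+3\theta|n|}$. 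A symmetric computation for $k\leq n$ yields the dual estimate with exponent $(\beta-\theta)(k-n)+3\theta|n|$, so that the $\gamma$-weighted \eqref{801} would carry a candidate $\NmuD$ with parameters $(\widetilde{\P};\,\alpha+\theta,\,\beta-\theta,\,3\theta,\,3\theta)$.

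The hard part is verifying that these transferred parameters actually satisfy the $\NmuD$ inequalities $\widetilde{\alpha}+\widetilde{\theta}<0$ and $\widetilde{\beta}-\widetilde{\nu}>0$. Substitution turns them into $\alpha+4\theta<0$ and $\beta-4\theta>0$, which are strictly stronger than the Chu2 hypothesis $\alpha+2\theta<0$ discussed in Rem.~\ref{730}. In effect, the naive sandwich leaks a nonuniformity cost of $3\theta$, while the original dichotomy only affords $\theta$. Closing this gap would require a finer estimate on the interaction of $S^{\pm 1}$ with the projector—for instance a bound on $\|S(k)^{-1}\P(k)\|$ or $\|(\Id-\widetilde{\P}(n))S(n)^{-1}\|$ with a nonuniformity exponent strictly smaller than $|k|\theta$—which the weak nondegeneracy hypothesis does not supply. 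In view of the authors' announced counterexample in Ex.~\ref{773}, constructed through the optimal ratio map machinery of Sec.~\ref{optdichconst}, I would expect this gap to be uncloseable in general, which is exactly the refutation the remainder of the paper delivers.
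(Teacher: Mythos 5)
You have correctly diagnosed the situation: the paper does not prove this corollary at all — it quotes it from \cite{Chu2} precisely in order to refute it, and your analysis of why the natural transfer argument fails coincides exactly with the paper's own treatment. The sandwich estimate you derive, yielding candidate parameters $(\widetilde{\P};\alpha+\theta,\beta-\theta,3\theta,3\theta)$ and requiring $\alpha+4\theta<0$ and $\beta-4\theta>0$, is precisely Lemma~\ref{751} with its hypothesis \eqref{747}, and your conclusion that the gap is uncloseable is established in Subsection~\ref{774} via the optimal ratio maps (for $\gamma$ near the spectrum, $\st_{\P_i}^{\gamma}+3\theta>0$, so the transferred pair leaves the stable region) and concretely by the counterexample of Example~\ref{773}, where $\Sigma_{\NmuD}(A)=[-\omega-3a,-\omega+3a]\neq[-\omega-2a,-\omega+4a]=\Sigma_{\NmuD}(B)$.
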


In what follows, we present an example that contradicts the above result. Elucidating that the nonuniform exponential dichotomy spectrum is not invariant under weak kinematic similarity as asserted in \cite{Chu2}. We emphasize that in the next example, we are considering the conditions $\alpha+\theta<0$ and $\beta-\nu>0$ from Def.~\ref{DefNmuD}. However, even considering $\alpha+2\theta<0$ as assumed in \cite{Chu2}, we will see that the result is the same: the spectrum of a system is not preserved under weak kinematic similarity. 

\begin{example}\label{773}
    {\rm Let $3a>\omega>a$ and consider the difference equation
    \begin{equation}\label{731}
        x(n+1)=A(n)x(n), \qquad n\in\mathbb{Z},
    \end{equation}
 where $A(n)=e^{-\omega+a(n+1)\cos(n+1)-an\cos(n)-a\sin(n+1)+a\sin(n)}$. 
 
 The transition matrix of \eqref{731} is given by 
 \[
 \Phi(k,n)=e^{-\omega(k-n)+ak\cos(k)-an\cos(n)-a\sin(k)+a\sin(n)}, \qquad k,n \in\mathbb{Z}.
 \]

\noindent{\bf Claim 1:} The system \eqref{731} admits nonuniform exponential bounded growth with the estimation
 \begin{equation}\label{734}
     |\Phi(k,n)|\leq e^{2a}e^{(\omega+a)|k-n|}e^{2a|n|}, \quad \text{for all $k,n\in\mathbb{Z}$}.
 \end{equation}

Let us commence by noting that we can write the transition matrix in the following form
 \[
 \Phi(k,n)=e^{(-\omega+a)(k-n)+ak(\cos(k)-1)-an(\cos(n)-1)}e^{-a\sin(k)+a\sin(n)}, \quad k,n\in\mathbb{Z}.
 \]
In order to deduce \eqref{734}, we will prove two intermediate estimations for $|\Phi(k,n)|$. First, we analyse the cases for $k\geq n$. 
\begin{itemize}
    \item [(i)] Case $k\geq n\geq0:$ since $ak(\cos(k)-1)\leq0$, we get 
\[
|\Phi(k,n)|\leq e^{2a}e^{(-\omega+a)(k-n)}e^{-an(\cos(n)-1)}.
\]
Moreover, since $-2\leq\cos(n)-1$, we obtain that $2an\geq -an(\cos(n)-1)$, which in turn implies 
\[
|\Phi(k,n)|\leq e^{2a}e^{(-\omega+a)(k-n)}e^{2an}=e^{2a}e^{(-\omega+a)(k-n)}e^{2a|n|}.
\]

\item [(ii)] Case $k\geq 0\geq n:$ since $ak(\cos(k)-1)\leq0$ and $-an(\cos(n)-1)\leq0$, we obtain 
\begin{equation*}
    \begin{split}
        |\Phi(k,n)|&\leq e^{2a}e^{(-\omega+a)(k-n)}\\
        &\leq e^{2a}e^{(-\omega+a)(k-n)}e^{2a|n|}.
    \end{split}
\end{equation*}

\item [(iii)] Case $0\geq k\geq n:$ using a similar argument as in the previous cases, we infer 
\[
|\Phi(k,n)|\leq e^{2a}e^{(-\omega+a)(k-n)}e^{ak(\cos(k)-1)}.
\]
Moreover, since $-2\leq(\cos(k)-1)$, we obtain that $-2ak\geq ak(\cos(k)-1)$, which implies that
\begin{equation*}
    \begin{split}
        |\Phi(k,n)|&\leq e^{2a}e^{(-\omega+a)(k-n)}e^{ak(\cos(k)-1)}\\
        &\leq e^{2a}e^{(-\omega+a)(k-n)}e^{-2ak}\\
        & = e^{2a}e^{(-\omega+a)(k-n)}e^{2a|k|}\\
        & \leq e^{2a}e^{(-\omega+a)(k-n)}e^{2a|n|}.
    \end{split}
\end{equation*}
\end{itemize}
From (i), (ii) and (iii), we conclude 
\begin{equation}\label{732}
|\Phi(k,n)|\leq e^{2a}e^{(-\omega+a)(k-n)}e^{2a|n|}, \qquad k\geq n.
\end{equation}

On the other hand, to analyze the cases for $n\geq k$, we use the fact that the transition matrix can be written in the form 
\[
 \Phi(k,n)=e^{(-\omega-a)(k-n)+ak(\cos(k)+1)-an(\cos(n)+1)}e^{-a\sin(k)+a\sin(n)}, \quad k,n\in\mathbb{Z}.
 \]
 
Considering the above decomposition, together with  the fact that $2\geq(\cos(n)+1)\geq 0$, for all $n\in\mathbb{Z}$, we infer
\begin{equation}\label{733}
    |\Phi(k,n)|\leq e^{2a}e^{(-\omega-a)(k-n)}e^{2a|n|}, \qquad n\geq k.
\end{equation}

Therefore, gathering the estimations \eqref{732} and \eqref{733}, it is straightforward to deduce the inequality \eqref{734}. 

\vspace{0.1cm}

\noindent{\bf Claim 2:} The nonuniform exponential dichotomy spectrum of system \eqref{731} is given by
\[
\Sigma_{\NmuD}(A)=[-\omega-3a,-\omega+3a].
\] 
In fact, since the one-dimensional system \eqref{731} has $(\Nmu,\epsilon)$-growth, its nonuniform dichotomy spectrum is a nonempty compact interval of the form $\Sigma_{\NmuD}(A)=[c,d]$. In what follows, we will find exactly the values $c$ and $d$.  

From \eqref{732} we can estimate the $\gamma$-weighted operator as
\[
|\Phi_{\gamma}(k,n)|\leq e^{2a}e^{(-\omega-\gamma+a)(k-n)}e^{2a|n|}, \qquad k\geq n.
\]
Then, for every $\gamma$ such that $\gamma>-\omega+3a$, we deduce that $\gamma$ belongs to the nonuniform resolvent of \eqref{731} and $\st_{\P}^{\gamma}=-\omega-\gamma+3a$. Additionally, from Rem.~\ref{boundarypoints}, we infer that $d=-\omega+3a$.

Similarly as above, from \eqref{733} we derive the inequality
\[
 |\Phi_{\gamma}(k,n)|\leq e^{2a}e^{(-\omega-\gamma-a)(k-n)}e^{2a|n|}, \qquad n\geq k.
\]
Hence, for every $\gamma$ such that $\gamma<-\omega-3a$, we deduce that $\gamma$ belongs to the nonuniform resolvent of \eqref{731} and $\un_{\P}^{\gamma}=-\omega-\gamma-3a$. Additionally, from Rem.~\ref{boundarypoints}, we infer that $c=-\omega-3a$.

\noindent{\bf Claim 3:} The system \eqref{731} is weakly kinematically similar to 
\begin{equation}\label{735}
        x(n+1)=B(n)x(n), \qquad n\in\mathbb{Z},
\end{equation}
where $B(n)=e^{-(\omega-a)+a(n+1)\cos(n+1)-an\cos(n)-a\sin(n+1)+a\sin(n)}$.  

Indeed, the map $S\colon\mathbb{Z}\to\mathbb{R}$ defined by $S(n)=e^{-an}$ is a weakly nondegenerate matrix function. Note that the estimations $|S(n)|\leq e^{2a|n|}$ and $|S(n)^{-1}|\leq e^{2a|n|}$ are satisfied, for all $n\in\mathbb{Z}$. In addition, the term $e^{2a}$ coincide exactly with the nonuniform error from \eqref{732} --as required in Def.~\ref{weaknondegenerate}-- and  for every $n\in\mathbb{Z}$ we have 
\begin{equation*}
\begin{split}
 S(n+1)B(n)&=e^{-a(n+1)}e^{-(\omega-a)+a(n+1)\cos(n+1)-an\cos(n)-a\sin(n+1)+a\sin(n)}\\
    &=e^{-\omega+a(n+1)\cos(n+1)-an\cos(n)-a\sin(n+1)+a\sin(n)}e^{-an}\\
    &=A(n)S(n).   
\end{split}
\end{equation*}

\noindent{\bf Claim 4:} The nonuniform exponential dichotomy spectrum of system \eqref{735} is given by 
\[
\Sigma_{\NmuD}(B)=[-\omega-2a,-\omega+4a].
\]

This assertion follows directly from the development established in Claim 2. Note that $\omega$ is just replaced by $(\omega-a)$. 

\vspace{0.2cm}

\noindent{\bf Conclusion:} Summarising what was done in the previous claims, the system \eqref{731} is weakly kinematically similar to system \eqref{735}, but 
\[
\Sigma_{\NmuD}(A)=[-\omega-3a,-\omega+3a]\neq[-\omega-2a,-\omega+4a]=\Sigma_{\NmuD}(B).
\]
}

\end{example}

\bigskip

\subsection{The Non-invariance Problem}\label{774}

In the discrete context, the additional hypothesis in the definition of weak kinematic similarity —regarding the error parameter being the same as the dichotomy error— does not solve the problem of spectral non-invariance detected in \cite{GJ}. 

For simplicity, in the remainder of this section, when we write that a system admits $\NmuD$, we are assuming that $\theta = \nu$ in Def.~\ref{DefNmuD}. Moreover, we will also assume that the bound in Def.~\ref{weaknondegenerate} is given by $M\mu(k)^{\sgn(k)\theta}$, where $\mu\colon\mathbb{Z}\to\mathbb{R}^{+}$ is a discrete growth rate (see Rem.~\ref{748}). Let us, in this case, refer to it as $\mu$-weak kinematic similarity to emphasize the growth rate $\mu$. 

Similarly to \cite[Lem.~3.5]{GJ}, we can establish the following result:

\begin{lemma}\label{751}
    Assume that system \eqref{700} admits $\NmuD$ with parameters $(\P;\alpha,\beta,\theta,\theta)$ and is $\mu$-weakly kinematically similar to \eqref{801} and 
    \begin{equation}\label{747}
        \min\{-\alpha,\beta\}>4\theta.
    \end{equation}
    Then system \eqref{801} admits $\NmuD$ with parameters $(\Q; \alpha+\theta,\beta-\theta,3\theta,3\theta)$, where $\Q(n)=S^{-1}(n)\P(n)S(n)$, for all $n\in\mathbb{Z}$. 
\end{lemma}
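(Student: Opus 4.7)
The plan is to translate the dichotomy estimates of \eqref{700} into estimates for \eqref{801} through the similarity transformation, carefully absorbing the two extra $\mu(\cdot)^{\sgn(\cdot)\theta}$ factors that the bounds on $S$ and $S^{-1}$ introduce.

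First I would verify that $\Q(n):=S^{-1}(n)\P(n)S(n)$ is an invariant projector for \eqref{801}. Idempotency $\Q(n)^{2}=\Q(n)$ is immediate from $\P(n)^{2}=\P(n)$. The invariance relation $B(n)\Q(n)=\Q(n+1)B(n)$ follows by writing $B(n)=S^{-1}(n+1)A(n)S(n)$, which comes from the intertwining $S(n+1)B(n)=A(n)S(n)$, and then applying $A(n)\P(n)=\P(n+1)A(n)$. The same intertwining yields, by induction, the key operator identity
\[
\Psi(k,n)=S^{-1}(k)\Phi(k,n)S(n),\qquad k,n\in\mathbb{Z},
\]
where $\Psi$ is the evolution operator of \eqref{801}, extended to $k<n$ through the isomorphism between $\ker\Q(n)$ and $\ker\Q(k)$ inherited from $\P$.

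Next I would bound each half of the dichotomy. For $k\geq n$, combining $\Psi(k,n)\Q(n)=S^{-1}(k)\Phi(k,n)\P(n)S(n)$ with the $\NmuD$ estimate for $\Phi(k,n)\P(n)$ and the bounds on $S$ and $S^{-1}$ yields
\[
\|\Psi(k,n)\Q(n)\|\leq M^{2}K\left(\frac{\mu(k)}{\mu(n)}\right)^{\alpha}\mu(k)^{\sgn(k)\theta}\mu(n)^{2\sgn(n)\theta}.
\]
To reshape this into the target form with parameters $(\alpha+\theta,3\theta)$, it suffices to verify the pointwise inequality
\[
\mu(k)^{\sgn(k)\theta}\mu(n)^{2\sgn(n)\theta}\leq\left(\frac{\mu(k)}{\mu(n)}\right)^{\theta}\mu(n)^{3\sgn(n)\theta},
\]
which I would check by a four-case analysis according to the signs of $k$ and $n$, using the monotonicity $\mu(k)\geq\mu(n)$ together with the standing fact that $\mu(n)\geq 1$ for $n\geq 0$ and $\mu(n)\leq 1$ for $n\leq 0$. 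The symmetric unstable estimate for $k\leq n$ is analogous; this time the similarity factors are absorbed into $(\mu(k)/\mu(n))^{-\theta}\geq 1$.

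Finally, to conclude that the resulting estimates constitute a genuine $\NmuD$ rather than merely an $\sNmuD$, I would check the dichotomy admissibility conditions $(\alpha+\theta)+3\theta<0$ and $(\beta-\theta)-3\theta>0$, which are equivalent to $\alpha+4\theta<0$ and $\beta-4\theta>0$; this is precisely the hypothesis $\min\{-\alpha,\beta\}>4\theta$. The main subtlety is not algebraic but notational: the sign-case bookkeeping of $\mu(k)^{\sgn(k)\theta}\mu(n)^{2\sgn(n)\theta}$ must be handled uniformly in all four sign patterns of $k,n$, and the argument hinges on the observation that any residual factor $\mu(n)^{\sgn(n)\theta}$ that is not absorbed into $(\mu(k)/\mu(n))^{\theta}$ is always $\geq 1$. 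This is the discrete counterpart of \cite[Lem.~3.5]{GJ}, and aside from replacing continuous evolution by discrete evolution, the bookkeeping is identical.
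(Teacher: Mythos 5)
Your argument is correct and is exactly the intended one: the paper gives no independent proof of this lemma, deferring to the continuous-time analogue \cite[Lem.~3.5]{GJ}, and your conjugation $\Psi(k,n)=S^{-1}(k)\Phi(k,n)S(n)$ followed by absorbing the two Lyapunov factors into the weakened exponents $(\alpha+\theta,3\theta)$ and $(\beta-\theta,3\theta)$ is precisely that argument in discrete time. The sign-case inequality you isolate reduces to the monotonicity of $j\mapsto\mu(j)^{(\sgn(j)-1)\theta}$ (non-increasing) and $j\mapsto\mu(j)^{(\sgn(j)+1)\theta}$ (non-decreasing), and condition \eqref{747} is exactly what makes the resulting estimates a genuine $\NmuD$, so nothing is missing.
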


On the other hand, we can establish the next result

\begin{lemma}\label{752}
    Let $\mu\colon\mathbb{Z}\to\mathbb{R}^{+}$ be a discrete growth rate and let $\gamma\in\mathbb{R}$. Assume that system \eqref{700} is $\mu$-weakly kinematically similar to the system \eqref{801}. Then the $\gamma$-weighted systems \eqref{700} and \eqref{801} are  $\mu$-weakly kinematically similar.
\end{lemma}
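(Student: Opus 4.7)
The plan is to propose that the same transformation $S$ witnessing the $\mu$-weak kinematic similarity between \eqref{700} and \eqref{801} also realizes the similarity between their $\gamma$-weighted counterparts, with no modification. First I would recall that by hypothesis there exists a map $S\colon\mathbb{Z}\to GL_d(\mathbb{R})$ and a constant $M>0$ such that $\|S(k)\|\leq M\mu(k)^{\sgn(k)\theta}$ and $\|S(k)^{-1}\|\leq M\mu(k)^{\sgn(k)\theta}$ for all $k\in\mathbb{Z}$, together with the intertwining identity $S(n+1)B(n)=A(n)S(n)$ for all $n\in\mathbb{Z}$. The weakly nondegenerate bounds involve only $S$ itself and not the coefficient maps $A$ or $B$, so they persist unchanged when we pass to the $\gamma$-weighted systems.

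Next I would verify the intertwining identity for the $\gamma$-weighted coefficients. Denote these by $\widetilde{A}(n):=A(n)\bigl(\mu(n+1)/\mu(n)\bigr)^{-\gamma}$ and $\widetilde{B}(n):=B(n)\bigl(\mu(n+1)/\mu(n)\bigr)^{-\gamma}$. The crucial and essentially only observation is that the factor $\bigl(\mu(n+1)/\mu(n)\bigr)^{-\gamma}$ is a real number acting as a scalar multiple of the identity, hence it commutes with $S(n)$ and with $A(n)$. Multiplying the relation $S(n+1)B(n)=A(n)S(n)$ on the right by this scalar and then shuffling it across $S(n)$ produces
\[
S(n+1)\widetilde{B}(n)=S(n+1)B(n)\left(\frac{\mu(n+1)}{\mu(n)}\right)^{-\gamma}=A(n)\left(\frac{\mu(n+1)}{\mu(n)}\right)^{-\gamma}S(n)=\widetilde{A}(n)S(n),
\]
which is the desired identity.

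The main step, and in fact the only step requiring any thought, is this scalar commutation. There is no real obstacle: $\gamma$-weighting amounts to a pointwise multiplication by a nonzero scalar, which trivially preserves every linear structural identity between the coefficients and $S$. Consequently $S$ itself is a $\mu$-weakly nondegenerate transformation realizing the kinematic similarity between the $\gamma$-weighted systems, proving the lemma.
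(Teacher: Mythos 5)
Your proof is correct and follows the same route as the paper: the paper's proof simply observes that the same weakly nondegenerate map $S$ works for the $\gamma$-weighted systems, which is precisely your argument with the scalar commutation of the weight factor made explicit. No issues.
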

\begin{proof}
    This follows directly from definition of weak kinematic similarity. We emphasize that the $\gamma$-weighted systems \eqref{700} and \eqref{801} are $\mu$-weakly kinematically similar with the same weakly nondegenerate map $S\colon\mathbb{Z}\to GL_{d}(\mathbb{R})$ for which systems \eqref{700} and \eqref{801} are $\mu$-weakly kinematically similar.
\end{proof}

Let us return to the Cor.~\ref{749} in which the invariance of the $\NmuD$-spectrum through $\mu$-weak kinematic similarity is asserted. Note that this result is equivalent to say that $\NmuD$-resolvent of the systems are the same through $\mu$-weak kinematic similarity, {\it i.e.} $\rho_{\NmuD}(A)=\rho_{\NmuD}(B)$. The main issue with this assertion is revealed in Lem.~\ref{751}, in which condition \eqref{747} is neccessary in order to obtain an inclusion of the resolvent sets, and otherwise, condition \eqref{747} will strictly depend on the choice of $\gamma$ in $\rho_{\NmuD}(A)$. We will delve into this idea to clarify it in detail: assume that system \eqref{700} is $\mu$-weakly kinematically similar to the system \eqref{801} and let $\gamma\in\rho_{\NmuD}(A)$. Then $\gamma$-weighted system \eqref{700} admits $\NmuD$ with parameters $(\P_{i};\alpha,\beta,\theta,\theta)$, for some invariant projector $n\mapsto\P_{i}(n)$. Additionally, from Lem.~\ref{752}, we know that the $\gamma$-weighted systems \eqref{700} and \eqref{801} are $\mu$-weakly kinematically similar. Now, in order to ensure that $\gamma$ belongs to $\rho_{\NmuD}(B)$ we need that condition \eqref{747} holds (the dependence of this condition on $\gamma$ is evident because $\alpha$, $\beta$ and $\theta$ depend of it), in this case, the $\gamma$-weighted system \eqref{801} admits $\NmuD$ with parameters $(\Q _{i};\alpha+\theta,\beta-\theta,3\theta,3\theta)$, where $\Q _{i}=S^{-1}\P _{i} S$.

In the context of the above situation, the optimal ratio maps defined in Sect. \ref{optdichconst} provide a partial answer to the non-invariance problem. They also ensure that non-invariance is always detected in the specific case where system \eqref{700} exhibits $(\Nmu,\epsilon)$-growth. Indeed, assume that system \eqref{700} admits $(\Nmu,\epsilon)$-growth and is $\mu$-weakly kinematically similar to \eqref{801}. Then, for every $\theta>0$, since Thm.~\ref{limits}, we can choose $\gamma$ close enough to the $\NmuD$-spectrum of \eqref{700} such that $\st_{\P_i}^{\gamma} + 3\theta>0$ and $\un_{\P_i}^{\gamma} + 3\theta<0$. In consequence, 
\begin{itemize}
    \item For every $(\alpha,\theta)\in \St_{\P_i}^{\gamma}$, we have $\alpha+\theta\geq \st_{\P_i}^{\gamma}$, which in turn implies that $\alpha +4\theta >0$. Hence, the pair $(\alpha+\theta,3\theta)$ does not belong to $\St_{\Q_{i}}^{\gamma}$;

    \smallskip

    \item  For every $(\beta,\nu)\in \Un_{\P_i}^{\gamma}$, we have $\beta-\theta\leq \un_{\P_i}^{\gamma}$, which in turn implies that $\beta-4\theta <0$. Hence, the pair $(\beta-\theta,3\theta)$ does not belong to $\Un_{\Q_{i}}^{\gamma}$.
\end{itemize}

In summary, under the assumption of $(\Nmu,\epsilon)$-growth, and for $\gamma$ sufficiently close to the $\NmuD$ spectrum of \eqref{700}, Lemma~\ref{751} does not permit us to conclude that $\gamma$ belongs to $\rho_{\NmuD}(B)$.

\begin{remark}
    {\rm
An important consequence of the results developed here is that the reducibility result presented in \cite[Theorem 3.11]{Chu2}, which was later used in \cite{Song}, is imprecise. The reason is that, in general, the spectrum is not preserved by a weak kinematic similarity as stated in \cite[Corollary 3.9]{Chu2}. Based on the observations made in this work, we conclude that two systems that are weakly kinematically similar do not necessarily share any of their spectra.

In particular, we cannot generally conclude that the $\NmuD$-spectrum is shared between a system and its block diagonalization. However, we conjecture that there may be some shared characterization between these systems. Specifically, we propose as a final conjecture that the $\Sigma_\sNmuD^\UPP$ spectrum is indeed shared between a system and its block diagonalization, as obtained through the method developed in \cite[Theorem 3.10]{Chu2}. This spectrum, unlike $\Sigma_\NmuD$, does not depend on the explicit relationship between the parameters, but rather only on the presence of the $\UPP$ property.
    }
\end{remark}


\end{document}